\newtheorem{thm}{Theorem}
\newtheorem{cond}{Condition}
\newtheorem{cor}[thm]{Corollary}
\newtheorem{lem}[thm]{Lemma}
\newtheorem{prop}[thm]{Proposition}
\theoremstyle{definition}
\newtheorem{defn}[thm]{Definition}
\newtheorem{rem}[thm]{Remark}
\numberwithin{thm}{section}
\numberwithin{equation}{section}
\newcommand{\norm}[1]{\left\Vert#1\right\Vert}
\newcommand{\abs}[1]{\left\vert#1\right\vert}
\newcommand{\eps}{\varepsilon}
\newcommand{\la}{\langle}
\newcommand{\ra}{\rangle}
\newcommand{\A}{\mathcal{A}}
\newcommand{\B}{\mathcal{B}}
\newcommand{\n}{\mathbb{N}}
\newcommand{\sg} {\sigma}
\newcommand{\z}{\mathbb{Z}}
\newcommand{\om}{\omega}
\newcommand{\prt}{\widehat{\otimes}}
\newcommand{\vnt}{\overline{\otimes}}
\begin{document}
\title[Some Beurling--Fourier algebras on compact groups are operator algebras]{Some Beurling--Fourier algebras on compact groups are operator algebras}

\author{Mahya Ghandehari}
\address{Department of Mathematics and Statistics, Dalhousie University,
Halifax, Nova Scotia, B3H 4R2 Canada }
\email{ghandeh@mathstat.dal.ca}

\author{Hun Hee Lee}
\address{Department of Mathematical Sciences and Research Institute of Mathematics,
Seoul National University, San56-1 Shinrim-dong Kwanak-gu, Seoul 151-747, Republic of Korea}
\email{hunheelee@snu.ac.kr}

\author{Ebrahim Samei}
\address{Department of Mathematics and Statistics, University of Saskatchewan, Saskatoon, Saskatchewan, S7N 5E6, Canada}
\email{samei@math.usask.ca}

\author{Nico Spronk}
\address{Department of Pure Mathematics, University of Waterloo, Waterloo,
Ontario, N2L 3G1, Canada}
\email{nspronk@math.uwaterloo.ca}

\thanks{Hun Hee Lee was supported by  Basic Science
  Research Program through the National Research Foundation of
  Korea(NRF) funded by the Ministry of Education, Science and
  Technology(2012R1A1A2005963). Ebrahim Samei was supported by NSERC under grant no. 366066-09. Nico Spronk
was supported by NSERC under grant no. 312515-05.}

\thanks{2000 \it{Mathematics Subject Classification}.
\rm{Primary 43A30, 47L30, 47L25 Secondary 43A75}}

\keywords{Weighted Fourier algebras, operator algebras,
compact connected Lie groups, Littlewood multipliers}

\begin{abstract}
Let $G$ be a compact connected Lie group. The question of when a weighted Fourier algebra on $G$ is completely isomorphic to an operator algebra will be investigated in this paper. We will demonstrate that the dimension of the group plays an important role in the question. More precisely, we will get a positive answer to the question when we consider a polynomial type weight coming from a length function on $G$ with the order of growth strictly bigger than the half of the dimension of the group. The case of $SU(n)$ will be examined, focusing more on the details including negative results. The proof for the positive directions depends on a non-commutative version of Littlewood multiplier theory, which we will develop in this paper, and the negative directions will be taken care of by restricting to a maximal torus.
\end{abstract}

\maketitle

\section{Introduction}

Group algebras $L^1(G)$ for locally compact groups $G$ are some of the most fundamental examples of Banach algebras, which are in some sense far away from $C^*$-algebras, or more generally (non-self-adjoint) operator algebras, i.e. closed subalgebras of $B(H)$ for some Hilbert space $H$. For example, $L^1(G)$ is not Arens regular for infinite group $G$ \cite{Young73} whilst operator algebras are always Arens regular (see \cite[Chapter 4]{DL} for exampe). By endowing an appropriate submultiplicative weight function $\om : G \to [1,\infty)$ the weighted group algebra $L^1(G,\om)$ could be closer to operator algebras in some cases. Indeed, if $G$ is a discrete countable group, then some of weighted group algebras $\ell^1(G,\om)$ actually become Arens regular \cite[Chapter 8]{DL}. Varopoulos proved that even more is true \cite{Var72}. When $G = \mathbb{Z}$ and $\rho_\alpha$ is the polynomial type weight given by
	$$\rho_\alpha(x) =  (1+\abs{x})^\alpha,\;\; x\in \mathbb{Z},\; \alpha\ge 0,$$
$\ell^1(\z, \rho_\alpha)$ is isomorphic to a $Q$-algebra if and only if $\alpha > \frac{1}{2}$. Recall that a $Q$-algebra is a quotient of a uniform algebra, a closed subalgebra of a commutative $C^*$-algebra $C(X)$ for some compact Hausdorff space $X$. Since a quotient algebra of an operator algebra is again an operator algebra (see \cite[Proposition 2.3.4]{BLe04} for example), $Q$-algebras are always operator algebras.

The initial motivation of this paper was to consider a non-commutative version of Varopoulos's result. The correct non-commutative analogue of group algebras are Fourier algebras $A(G)$. The discrete-compact duality suggests us that we might get weighted versions of Fourier algebras on certain compact groups which are isomorphic to operator algebras. Recently, the theory of weighted Fourier algebras have been developed by Ludwig/Spronk/Turowska \cite{LST} and Lee/Samei \cite{LS} under the name of Beurling--Fourier algebras, which we will use as our model of weighted versions of Fourier algebras. Let $G$ be a compact group, and let $\widehat{G}$ be the equivalence classes of irreducible unitary representations of $G$. We call a function $\om : \widehat{G} \to [1,\infty)$ a {\it weight} if
	\begin{equation*}\label{eq-weight1}
	\om(\sigma) \le \om(\pi) \om(\pi')
	\end{equation*}
for any $\pi,\pi'\in \widehat{G}$ and $\sigma\in \widehat{G}$ which appears in the irreducible decomposition of $\pi\otimes \pi'$ (see \cite[section 5.1]{Fol} for the details). In \cite{LST}, the Beurling--Fourier algebra $A(G,\om)$ is defined by
	$$A(G,\om):=
	\{f\in C(G) : \norm{f}_{A(G,\om)}=\sum_{\pi\in \widehat{G}}d_\pi \om(\pi)\norm{\widehat{f}(\pi)}_1 < \infty \},$$
where $\widehat{f}(\pi)$ is the Fourier coefficient of $f$ at $\pi\in \widehat{G}$ and $\norm{\cdot}_1$ denotes the trace norm. Note that the constant weight $\om \equiv 1$ gives us the usual Fourier algebra. In this paper we will be mainly interested in the following weights. The first one is $\om_\alpha$, $\alpha>0$, {\it the dimension weight of order $\alpha$} given by
	$$\om_\alpha(\pi) = d^\alpha_\pi,\;\; \pi\in \widehat{G}.$$
If the compact group $G$ is a connected Lie group, then $\widehat{G}$ is generated by a finite generating set $S$, so that we can consider the associated length function $\tau_S$. In this case we have the second kind of weight $\om^\alpha_S$, which we call {\it the polynomial weight of order $\alpha$}, given by
	$$\om^\alpha_S(\pi) = (1+\tau_S(\pi))^\alpha,\;\; \pi\in \widehat{G}.$$
Note that both of the above weights are of polynomial type. One can also have exponential type weights; the main example is as follows.
For $0\le \alpha \le 1$, we define the {\it exponential weight of order $\alpha$} as
			$$\gamma_\alpha(\pi)=e^{\tau_S(\pi)^\alpha},\;\; \pi\in \widehat{G}.$$
See section \ref{sec-BF-alg-gen} for the details of the above definitions.

This Beurling--Fourier algebra actually first appeared in \cite{J1} under the name of $A_\gamma(G)$, which played an important role in the proof of the fact that $A(SU(2))$ is a non-amenable Banach algebra. The algebra $A_\gamma(G)$ was rediscovered in \cite{FSS1} together with its operator space counterpart $A_\Delta(G)$, where the authors investigated various aspects of amenability of the algebras. In our notations the algebra $A_\gamma(G)$ is nothing but $A(G, \om_1)$. The general concept of Beurling--Fourier algebra has been defined in \cite{LST} and \cite{LS}. The article \cite{LS} focused on amenability properties and Arens regularity of Beurling--Fourier algebras and demonstrated that Beurling--Fourier algebras behave quite differently from the usual Fourier algebra, whilst \cite{LST} focused on the spectral properties showing that Beurling--Fourier algebras capture different aspects of the underlying group compared to the usual Fourier algebras. One important consequence of these results is that by putting them together, one can get a better picture of non-amenability of certain Fourier algebras relating them to the geometric structure of their underlying groups. Johnson's work in \cite{J1} show that the amenability of $A(G)$ related to the behavior of $A_\gamma(G)$, which we know by now it is a weighted $\ell^1$-direct sum of trace classes, whereas by \cite{FSS1}, the operator amenability relates to the behavior of $A_\Delta(G)$, which is a weighted $\ell^1$-direct sum of Hilbert spaces. In fact, it is shown in Theorem \ref{thm-restriction-dim} (see \cite{LS} for $n=2$) that if one restrict $A(SU(n))$ to a maximul torus, then one obtains the weighted group algebra (or the Beurling algebra) $\ell^1(\mathbb{Z}^{n-1}, (1+\|n\|)^{n-1})$ which is known to behave poorly with respect to amenability. These gives us more insightful picture on why amenability fails for Fourier algebras of non-abelian compact connected Lie groups.

As is usual in the theory of Fourier algebras, we will work in the category of operator spaces. This allows us to use D. Blecher's completely isomorphic characterization of operator algebras requiring the algebra multiplication map to be completely bounded on the Haagerup tensor product \cite{B95}. Note that there is no such characterization of operator algebras in the category of Banach spaces \cite{Car}.

We summarize the main results of this paper. It turns out that there is an interesting connection between $d(G)$, the dimension of the Lie group $G$ and the property of the associated Beurling--Fourier algebra being completely isomorphic to an operator algebra. Proofs of these results will be given in Section \ref{sec-BF-OP}.

	\begin{thm}
	Let $G$ be a compact connected Lie group. Then $A(G, \om^\alpha_S)$ is completely isomorphic to an operator algebra if $\alpha>\frac{d(G)}{2}$ and fails to be completely isomorphic to an operator algebra if
$G=SU(n)$, $n\ge 2$ and $\alpha \leq \frac{n-1}{2}$. Moreover, $A(G,\gamma_\alpha)$ on $G$ with respect to the exponential weight of order $\alpha$ is completely isomorphic to an operator algebra if $0< \alpha < 1$.
	\end{thm}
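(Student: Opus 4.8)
The theorem has three assertions: a positive result for the polynomial weight when $\alpha > \frac{d(G)}{2}$, a positive result for the exponential weight when $0 < \alpha < 1$, and a negative result for $A(SU(n), \om^\alpha_S)$ when $\alpha \le \frac{n-1}{2}$. The two positive parts and the negative part require genuinely different techniques, so I would treat them separately and assemble them at the end.

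\textbf{The positive directions (Haagerup factorization via a non-commutative Littlewood bound).} For both $A(G,\om^\alpha_S)$ with $\alpha > \frac{d(G)}{2}$ and $A(G,\gamma_\alpha)$ with $0<\alpha<1$, I would invoke Blecher's completely isomorphic characterization: it suffices to show that the multiplication map $m \colon A(G,\om) \otimes_{h} A(G,\om) \to A(G,\om)$ is completely bounded on the Haagerup tensor product. Since $A(G,\om)$ is, as an operator space, a weighted $\ell^1$-direct sum of trace classes $S^1_{d_\pi}$ indexed by $\widehat{G}$, I would reduce the problem to a weighted convolution-type estimate on the coefficient side. Concretely, the plan is to factor the weight: write $\om(\sigma) \le C\, h(\pi)h(\pi')$ uniformly over all $\sigma$ appearing in $\pi \otimes \pi'$, where $h$ is an auxiliary ``half-weight'' with $\sum_{\pi} d_\pi^2 \, h(\pi)^{-2} < \infty$. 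This square-summability is exactly where the dimension hypothesis enters: for the polynomial weight the eigenvalue/Weyl-dimension growth makes $\sum_\pi d_\pi^2 (1+\tau_S(\pi))^{-2\alpha}$ converge precisely when $2\alpha > d(G)$, i.e. $\alpha > \frac{d(G)}{2}$; for the exponential weight the subexponential factor $e^{\tau_S(\pi)^\alpha}$ with $\alpha<1$ dominates every polynomial, so the corresponding sum converges for every dimension. I then feed this factorization into the non-commutative Littlewood multiplier theorem developed earlier in the paper to bound the Haagerup-tensor multiplication completely boundedly; the Cauchy--Schwarz/Littlewood step is what converts the square-summability of $d_\pi^2 h(\pi)^{-2}$ into complete boundedness of $m$.

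\textbf{The negative direction for $SU(n)$.} Here the sufficient-condition machinery is useless, and I would instead obstruct the operator-algebra property by \emph{restriction to a maximal torus}. The key structural input is Theorem~\ref{thm-restriction-dim}: restricting $A(SU(n),\om^\alpha_S)$ to a maximal torus $\mathbb{T}^{n-1}$ yields (completely isomorphically) the Beurling algebra $\ell^1(\mathbb{Z}^{n-1}, (1+\|\cdot\|)^{\alpha})$ for an appropriate normalization of the length weight. Since restriction to a closed subgroup is a completely contractive algebra homomorphism and the operator-algebra property passes to the closed range of such a map (a quotient of an operator algebra is an operator algebra), if the restricted Beurling algebra on $\mathbb{Z}^{n-1}$ were \emph{not} completely isomorphic to an operator algebra, then neither is $A(SU(n),\om^\alpha_S)$. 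The plan is therefore to prove the sharp commutative fact that $\ell^1(\mathbb{Z}^{d}, (1+\|\cdot\|)^{\beta})$ fails to be an operator algebra when $\beta \le \frac{d}{2}$, the exact multidimensional analogue of Varopoulos's $\mathbb{Z}$-result; combined with the correspondence $\beta = \alpha$, $d = n-1$ this gives the threshold $\alpha \le \frac{n-1}{2}$.

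\textbf{Main obstacle.} I expect the crux to be the negative direction: establishing the sharp failure threshold $\beta \le \frac{d}{2}$ for $\ell^1(\mathbb{Z}^d,(1+\|\cdot\|)^\beta)$. Varopoulos's original argument for $d=1$ uses random (Rademacher or Gaussian) trigonometric-polynomial techniques together with the Grothendieck/von Neumann inequality characterization of operator algebras, and pushing this to a sharp lattice-point-counting estimate in dimension $d$ — so that the divergence governing the obstruction kicks in exactly at $\beta = \frac{d}{2}$ rather than at some lossy constant — is the delicate step. The positive directions, by contrast, are largely a matter of verifying that the dimension/length growth of $\widehat{G}$ makes the relevant half-weight square-summable and then citing the already-developed non-commutative Littlewood theorem; the only care needed there is the uniform weight-factorization across tensor-product decompositions, which follows from submultiplicativity of the weight.
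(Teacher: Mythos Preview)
Your global strategy---Blecher's criterion plus the non-commutative Littlewood machinery for the positive parts, restriction to a maximal torus for the negative part---is exactly the paper's. But the way you phrase the positive step contains a real gap, and the exponential case is missing its main lemma.

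For the polynomial weight, the ``half-weight'' product bound $\om(\sigma)\le C\,h(\pi)h(\pi')$ is not what feeds the Littlewood theorem. With $h=\om$ this is just submultiplicativity and only tells you that $T=\Gamma(W)(W^{-1}\otimes W^{-1})$ is a contraction in $L^\infty\bar\otimes L^\infty$; boundedness alone does \emph{not} place $T$ in $\mathcal{T}^2_r+\mathcal{T}^2_c$, which is what Theorem~\ref{thm-multiplier} requires. The paper's estimate is \emph{additive} on the ratio: from $\tau_S(\sigma)\le\tau_S(\pi)+\tau_S(\pi')$ and $\frac{1+s+t}{(1+s)(1+t)}\le\frac{1}{1+s}+\frac{1}{1+t}$ one gets
\[
\frac{\om^\alpha_S(\sigma)}{\om^\alpha_S(\pi)\om^\alpha_S(\pi')}\le C\Bigl(\frac{1}{(1+\|\pi\|_1)^\alpha}+\frac{1}{(1+\|\pi'\|_1)^\alpha}\Bigr),
\]
which splits $T$ as $S\tilde T_1+S\tilde T_2$ with $\tilde T_1\in\mathcal{T}^2_r$, $\tilde T_2\in\mathcal{T}^2_c$ central (so one may commute past $\Gamma(A)$ and apply Theorem~\ref{thm-multiplier} on each side). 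Your square-summability condition $\sum_\pi d_\pi^2(1+\|\pi\|_1)^{-2\alpha}<\infty$ is the right one, but it enters as the $L^2$-norm of $\tilde T_i$, not through a product factorization of $\om$.

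For the exponential weight, ``the subexponential factor dominates every polynomial, so the sum converges'' is true but beside the point: the issue is not summability of $\sum d_\pi^2\gamma_\alpha(\pi)^{-2}$ but controlling the ratio $\gamma_\alpha(\sigma)/\gamma_\alpha(\pi)\gamma_\alpha(\pi')$, and since $t\mapsto t^\alpha$ is not additive there is no direct analogue of the displayed inequality. The paper supplies the missing step as a separate domination lemma (Proposition~\ref{prop-exp-domination}): for $\beta$ large enough depending on $\alpha$, one has $\frac{\gamma_\alpha(\sigma)}{\gamma_\alpha(\pi)\gamma_\alpha(\pi')}\le M^2\frac{\om^\beta_S(\sigma)}{\om^\beta_S(\pi)\om^\beta_S(\pi')}$, reducing to the polynomial case. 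This is not a formality and your sketch does not account for it.

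On the negative side your plan matches the paper, modulo two points. The restriction result you need is Theorem~\ref{thm-restriction-poly} (polynomial weights), not Theorem~\ref{thm-restriction-dim} (dimension weights); the conclusion you state, $A(\mathbb{T}^{n-1},\rho_\alpha)$, is the right one. For the sharp torus obstruction at $\alpha\le\frac{n-1}{2}$ the paper does not use random polynomials but tensor powers of Rudin--Shapiro Hankel matrices, which give explicit $\pm1$ Hankelians $A_d$ with $\|A_d\|_\infty\le\sqrt{2d}$; testing $\widetilde\Gamma$ against them forces $\|\widetilde\Gamma\|\gtrsim\bigl(\sum_{i\in I^n_d}(1+\|i\|)^{-2\alpha}\bigr)^{1/2}\to\infty$. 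Your random approach would plausibly also succeed, but the deterministic one is cleaner.
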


The situation for the dimension weights is more delicate. First we show that, if $G$ is not simple
(as a compact, connected Lie group), then one can not get an operator algebra as an isomorphic image of a Beurling--Fourier algebras on $G$ coming from a dimension weight (see Theorem \ref{T:dim weight-non simple-non operator alg}). Hence we need to restrict to compact, connected {\it simple} Lie groups to achieve positive results.
However, even though we have developed the general theory, the computations become extremely technical
as the dimension of the Lie group grows even for the most classical case of compact simple Lie group, namely $SU(n)$. Nonetheless, we have the following results which give some evidence that one may obtain various classes of Beurling--Fourier
algebras coming from dimension weights which are completely isomorphic to operator algebras.
\begin{thm}
	The Beurling--Fourier alegrba $A(SU(n), \om_\alpha)$, $n\ge 2$ is completely isomorphic to an operator algebra if $\alpha>\frac{d(SU(n))}{2} = \frac{n^2-1}{2}$ and fails to be completely isomorphic to an operator algebra if $\alpha \leq \frac{1}{2}$.
	\end{thm}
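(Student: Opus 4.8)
The plan is to deduce both halves of the statement from the two previously-announced theorems together with the geometry of $SU(n)$, supplemented by comparison estimates between the dimension weight $\om_\alpha$ and the polynomial weight $\om_S^\beta$ for a suitable generating set $S$. For the positive direction, I would first fix a standard finite generating set $S$ of $\widehat{SU(n)}$ (for instance the fundamental representations, or the defining representation and its conjugate) and establish a two-sided comparison of the form $c\,\om_S^{\beta_1}(\pi) \le \om_\alpha(\pi) \le C\,\om_S^{\beta_2}(\pi)$ on $\widehat{SU(n)}$, valid for appropriate exponents $\beta_1,\beta_2$ depending on $\alpha$; the point is the Weyl dimension formula, which expresses $d_\pi$ as a polynomial in the highest weight coordinates whose degree equals the number of positive roots, i.e. $\tfrac{1}{2}(d(SU(n))-(n-1))$, while $\tau_S(\pi)$ grows linearly in those coordinates. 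Feeding this comparison into the already-proven statement that $A(G,\om_S^\alpha)$ is completely isomorphic to an operator algebra whenever $\alpha > d(G)/2$ — and noting that replacing a weight by a comparable one yields a completely isomorphic algebra — one obtains that $A(SU(n),\om_\alpha)$ is completely isomorphic to an operator algebra once $\alpha$ is large enough; the delicate part is to verify that the threshold one gets this way is exactly $d(SU(n))/2 = (n^2-1)/2$ and not something larger, which is why the Weyl formula has to be used with its sharp degree rather than a crude bound.

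For the negative direction, the natural route is the restriction-to-a-maximal-torus technique already used for the polynomial weight. Let $T\cong \mathbb{T}^{n-1}$ be a maximal torus of $SU(n)$ with Weyl group $W$. Restriction of functions $A(SU(n),\om_\alpha) \to A(T,\widetilde{\om})$ is a completely contractive algebra homomorphism onto a weighted Fourier algebra of the torus, i.e. a Beurling algebra $\ell^1(\mathbb{Z}^{n-1},\widetilde{\om})$, where the restricted weight $\widetilde{\om}$ on a character $\chi_\lambda$ is governed by $\min\{\om_\alpha(\pi) : \lambda \text{ is a weight of } \pi\}$. Since an operator algebra has all its quotients (and images of completely contractive homomorphisms, after taking closures) again operator algebras, it suffices to show that this Beurling algebra on $\mathbb{Z}^{n-1}$ fails to be an operator algebra when $\alpha \le \tfrac12$. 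Concretely, for $\lambda$ an extreme weight of the defining representation — so that the smallest irreducible containing $\lambda$ as a weight is itself of bounded dimension — one checks that $\widetilde{\om}(\chi_{k\lambda})$ grows no faster than a constant in $k$, so the restricted algebra contains an isometric copy of $\ell^1(\mathbb{Z})$, or more carefully of $\ell^1(\mathbb{Z},\rho_\gamma)$ with $\gamma \le \tfrac12$; by Varopoulos's theorem this is not a $Q$-algebra, and an adaptation (the operator-space version of the argument, since we need to rule out operator algebras, not merely $Q$-algebras) shows it is not completely isomorphic to an operator algebra either.

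The main obstacle I anticipate is precisely the lower-bound/negative part, specifically pinning down the asymptotics of the restricted weight $\widetilde{\om}$ along a well-chosen line $\{k\lambda : k\in\mathbb{Z}\}$ in the weight lattice: one must exhibit a sequence of irreducibles $\pi_k$ with $k\lambda$ among their weights and with $d_{\pi_k}^{\alpha}$ controlled (ideally $d_{\pi_k}$ bounded, or at worst growing like $k^{O(1)}$ in a way that keeps $\alpha\cdot(\text{exponent}) \le \tfrac12$), and this requires a genuine representation-theoretic input about which small representations of $SU(n)$ contain a given lattice point as a weight. A secondary technical point is that the classical obstruction for $\ell^1(\mathbb{Z},\rho_\gamma)$, $\gamma\le\tfrac12$, is phrased in the Banach-algebra category ($Q$-algebras), whereas here we need the completely isomorphic statement; I would handle this by invoking the stronger known fact that these Beurling algebras (with their canonical operator space structure as $\ell^1$-sums) are not completely isomorphic to operator algebras — using Blecher's characterization via complete boundedness of multiplication on the Haagerup tensor product — rather than the weaker $Q$-algebra obstruction. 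The positive direction, by contrast, should be essentially bookkeeping once the Weyl-dimension-versus-length comparison is in place.
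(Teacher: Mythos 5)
Both halves of your proposal have genuine gaps. For the positive direction, the sandwich that the Weyl dimension formula actually gives is $c\,\om_S^{\beta_1}(\pi)\le\om_\alpha(\pi)\le C\,\om_S^{\beta_2}(\pi)$ with $\beta_1=(n-1)\alpha$ and $\beta_2=\tfrac{n(n-1)}{2}\alpha$, since $d_{\pi_\lambda}$ ranges from $\asymp(1+\lambda_1)^{n-1}$ (one-row highest weights) to $\asymp(1+\lambda_1)^{n(n-1)/2}$ (generic ones). Because $\beta_1\ne\beta_2$ for $n\ge 3$, $\om_\alpha$ is \emph{not} equivalent to any single polynomial weight, so you cannot ``replace the weight by a comparable one'' and quote Theorem \ref{thm-general-poly}; nor does the sandwich control the quantity the question really depends on, namely $\om_\alpha(\sigma)/\bigl(\om_\alpha(\pi)\om_\alpha(\pi')\bigr)$ for $\sigma\subset\pi\otimes\pi'$. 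For instance, taking $\pi=\pi'$ with a generic highest weight of size $k$ and $\sigma$ the Cartan component, the sandwich-derived bound on this ratio is of order $k^{\alpha(n-1)(n-4)/2}$, which is unbounded for $n\ge 5$ (while the true ratio decays), and even where bounded it does not produce the decomposition $g(\pi)+h(\pi')$ needed for the Littlewood-multiplier argument. The paper's proof instead rests on Condition \ref{con1} (proved in \cite{CLS}): $d_\nu/(d_\lambda d_\mu)\le C(n)\bigl(\tfrac{1}{\lambda_1+1}+\tfrac{1}{\mu_1+1}\bigr)$ whenever $\pi_\nu\subset\pi_\lambda\otimes\pi_\mu$. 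This dimension-ratio estimate is not a consequence of pointwise weight comparison; it is exactly what reduces the dimension weight to the situation of Theorem \ref{thm-general-poly}, after which the summability $\sum_\pi d_\pi^2(1+\norm{\pi}_1)^{-2\alpha}<\infty$ for $\alpha>\tfrac{d(G)}{2}$ closes the argument.

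For the negative direction, your key claim --- that along the line $\{k\lambda\}$, $\lambda$ an extreme weight of the defining representation, the restricted weight stays essentially bounded --- is false. By the lower estimate in the proof of Theorem \ref{thm-restriction-dim}, every $\pi_\mu\in\widehat{SU(n)}$ whose restriction to the maximal torus contains the character of $P\in\mathbb{Z}^{n-1}$ satisfies $d_\mu\ge c_n(1+\norm{P})^{n-1}$, so the restricted weight at $kP$ grows like $k^{(n-1)\alpha}$. Hence a one-dimensional restriction only yields $\ell^1(\mathbb{Z},\rho_{(n-1)\alpha})$, whose failure to be an operator algebra requires $(n-1)\alpha\le\tfrac12$, i.e.\ $\alpha\le\tfrac{1}{2(n-1)}$, far short of the stated $\alpha\le\tfrac12$ once $n\ge 3$. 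The full range is obtained by keeping the whole maximal torus: by Theorem \ref{thm-restriction-dim} the restricted weight is equivalent to $\rho_{(n-1)\alpha}$ on $\mathbb{Z}^{n-1}$ and $A(\mathbb{T}^{n-1},\rho_{(n-1)\alpha})$ is a complete quotient of $A(SU(n),\om_\alpha)$, so Theorem \ref{thm-torus} for the $(n-1)$-dimensional torus gives failure exactly when $(n-1)\alpha\le\tfrac{n-1}{2}$, i.e.\ $\alpha\le\tfrac12$. (Your concern about completely bounded versus bounded is harmless: the torus theorem shows the multiplication is not even bounded on the Haagerup tensor product in that range, and quotient stability of operator algebras does the rest.)
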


 It is natural to ask whether the exponent $\frac{d(G)}{2}$, obtained in the preceding theorems, is optimal. We could demonstrate the optimal exponent of $\frac{d(G)}{2}$ only for the $n$-dimensional torus (Theorem \ref{thm-torus}). On the other hand, the values of $\alpha$ for which we obtain negative results for $SU(n)$ are quite smaller than $\frac{d(G)}{2}$ and we are not aware of any means to improve this gap.

We would like to point out that we can not hope the Beurling--Fourier algebra on $SU(n)$ to be completely isomorphic to a $Q$-algebra since it may not be even completely isomorphic to a $Q$-space, a quotient operator space of a closed subspace of a commutative $C^*$-algebra (see Remark \ref{R:BF SU(n)-not Q alg}).

This paper is organized as follows. In section \ref{sec-Littlewood}, we develop a non-commutative Littlewood multiplier theory, which is a main tool for the proof of positive results. This requires a heavy use of operator spaces, so that we collect the necessary back ground materials on operator spaces and operator algebras in the beginning of the section. Section \ref{sec-BF-alg-gen} starts with a brief introduction of Beurling--Fourier algebras on compact groups and dimension weights. Then the definition of polynomial weights on connected compact Lie groups will follow after some preliminaries of corresponding Lie theory. We will close the section with a more detailed representation theory of $SU(n)$ and restriction results of weights to a maximal torus. In section \ref{sec-BF-OP}, we present our mains results starting with a complete solution of the problem in the case of $n$-dimensional torus, and then we focus on polynomial type weights
and dimension weights. We will also prove positive results for exponential type of weights. We will study in details the case of $SU(n)$ as our main example of a compact connected, simple Lie group.
In the appendix we present two technical proofs concerning estimates of the dimension weight and the exponential weight.

We finish this section by pointing out an interesting and, at the same time, surprising contrast between Beurling algebras (i.e. weighted group algebras) and Beurling--Fourier algebras. It is shown in \cite{FSS1} that Beurling--Fourier algebras on compact groups with certain dimension weights can be embedded as a closed subalgebra of other Fourier algebras. An important consequence of such fact is that one can construct infinite-dimensional closed subalgebra of Fourier algebras on products of $SU(n)$ which are completely isomorphic to operator algebras (Corollary \ref{C:op. subalgebra of Fourier alg}). We are not aware of an analogous result in the commutative case (i.e. for Beurling algebras), so that this seems to be a non-commutative phenomenon.

\section{Some non-commutative Littlewood multipliers}\label{sec-Littlewood}

\subsection{Preliminaries on operator spaces and operator algebras}
We will assume that the reader is familiar with standard operator space theory including injective, projective and Haggerup tensor products of operator spaces. However, in this section we will recall some operator space theory which is somewhat less standard and will be used frequently later on.

The column and the row Hilbert spaces on a Hilbert space $H$ will be denoted by $H_c$ and $H_r$. Note that they are given by
	$$H_c = B(\mathbb{C}, H)\;\;\text{and}\;\; H_r = B(\overline{H},\mathbb{C}),$$
where $\overline{H}$ is the complex conjugate of $H$. When dim$H = n<\infty$, then $H_c$ and $H_r$ are usually denoted by $C_n$ and $R_n$.

For any operator space $E\subseteq B(H)$ and $T\in CB(C_n, E)$ we have the following concrete formula to calculate the completely bounded norm (shortly, cb-norm) of $T$.
	\begin{equation}\label{eq-cb-norm}
	\norm{T}_{cb} = \norm{\sum^n_{i=1} Te_i(Te_i)^*}^{\frac{1}{2}}_{B(H)},
	\end{equation}
where $\{e_i\}^n_{i=1}$ is an orthonormal basis of $C_n$. A similar formula for $T\in CB(R_n, E)$ is also available.
For operator spaces $(F_i)_{i\in I}$ we have
	\begin{equation}\label{eq-directsum}
	CB(E, \oplus_{i\in I}F_i) \cong \bigoplus_{i\in I} CB(E,F_i)
	\end{equation}
completely isometrically via the following identification.
	$$T \mapsto \oplus_{i\in I} (P^F_i \circ T),$$
where $P^F_j : \bigoplus_{i\in I} F_i \rightarrow F_j$ is the canonical projection, which is a complete contraction.

The column and the row Hilbert spaces are closely related to the Haggerup tensor product. In this paper we will more concerned about its dual version, namely the extended Haggerup tensor product. The extended Haggerup tensor product of dual operator spaces $E^*$ and $F^*$ will be denoted by
	$$E^*\otimes_{eh} F^*$$
which is given by $(E\otimes_h F)^*$ (\cite{BS}). Then we have the following complete isometry \cite[Lemma 13.3.1]{ER}
	$$E^*\otimes_{eh} F^* \cong \Gamma^c_2(F, E^*)$$
via the map
	$$A \otimes B \mapsto u,\;\; \text{where}\;\; u(X) = \la X, B\ra A.$$
Here $\Gamma^c_2(E,F)$ is the space of mappings factorized through column Hilbert space \cite[Section 13.3]{ER}. More precisely, $u \in \Gamma^c_2(E,F)$ if and only if there is a Hilbert space $H$ and $A \in CB(E, H_c), B\in CB(H_c, F)$ such that
	$$u = B \circ A.$$
The space $\Gamma^c_2(E,F)$ is equipped with the norm $\gamma^c_2(u) = \inf \norm{A}_{cb}\norm{B}_{cb}$, where the infimum runs over all possible such factorization. We have a natural operator space structure on $\Gamma^c_2(E,F)$ given by the matricial norms
	$$\gamma^c_{2,n}((u_{ij})) = \inf \norm{A}_{cb}\norm{B}_{cb},\; (u_{ij}) \in M_n(\Gamma^c_2(E,F))$$
where the infimum runs over all possible
	$$A \in CB(E, M_{1,n}(H_c))\;\; \text{ and }\;\; B \in CB(H_c, M_{n,1}(F))$$
with
	$$(u_{ij}) = B_{1,n} \circ A.$$
Recall that for any linear map $T : E\to F$ between operator spaces we denote the amplified map
	$$id_{M_{m,n}}\otimes T : M_{m,n}(E) \to M_{m,n}(F)$$
simply by $T_{m,n}$.

There is one more tensor product we will use frequently later on, namely the normal spatial tensor product. For any two dual operator spaces $E^*$ and $F^*$ we define the normal spatial tensor product $E^*\overline{\otimes}F^*$ by the weak$^*$-closure of the algebraic tensor product $E^*\otimes F^*$ in $CB(F, E^*)$ via the same identification map as above. Note that 		
	$$E^*\overline{\otimes}F^* = CB(F, E^*)$$
holds if $E$ satisfies the operator space approximation property (shortly OAP). See \cite[chapter 11]{ER} for the details.
	
We close this section with some operator algebra related notations. Let $\A$ be an operator algebra. We say that an operator space $E \subseteq B(H)$ is an {\it (abstract) operator left $\A$-module} if $E$ is a left $\A$-module with the left $\A$-module map
	$$\varphi : \A \times E \to E$$
and there are a complete isometry $j_E : E \rightarrow B(K)$ and a completely contractive map $j_\A : \A \rightarrow B(K)$ for some Hilbert space $K$ satisfying
	$$j_\A(X) j_E(Y) = j_E(\varphi(X,Y)),\;\; X\in \A, Y\in E.$$
We also say that $E$ is a left $h$-module over $\A$ if the module map $\varphi$ (understood as the associated linear map) extends to a completely bounded map
	$$\varphi : \A \otimes_h E \to E.$$
It is straightforward to check that any left operator $\A$-module is an left $h$-module over $\A$. Note that operator right $\A$-modules and right $h$-modules are similarly defined. Note also, in passing, that under mild assumptions, $h$-modules and operator modules are naturally isomorphic. See \cite[Theorem 3.3.1]{BLe04} for example.

\subsection{Non-commutative Littlewood multipliers}		
Let $\Sigma$ be a set for which we have a prescribed collection of natural numbers $(d_\sigma)_{\sigma\in\Sigma}$. If $1\leq p<\infty$ and $d\in \n$ we let $S^p_d$ denote $M_d$ equipped with the Schatten $p$-norm $\norm{\cdot}_p$.

We consider subspaces of the product space $\prod_{\sigma\in\Sigma}M_{d_\sigma}$
with associated norms for their elements:
\begin{align*}
L^\infty&=L^\infty(\Sigma)=\ell^\infty\text{-}\bigoplus_{\sigma\in\Sigma}M_{d_\sigma},
&\norm{A}_{L^\infty}&=\sup_{\sigma\in\Sigma}\norm{A_\sigma}_\infty \\
L^2&=L^2(\Sigma)=\ell^2\text{-}\bigoplus_{\sigma\in\Sigma}\sqrt{d_\sigma} S^2_{d_\sigma},\quad
&\norm{A}_{L^2}&=\left[\sum_{\sigma\in\Sigma}d_\sigma{\norm{A_\sigma}^2_2}\right]^{\frac{1}{2}} \\
L^1&=L^1(\Sigma)=\ell^1\text{-}\bigoplus_{\sigma\in\Sigma}d_\sigma S^1_{d_\sigma},
&\norm{A}_{L^1}&=\sum_{\sigma\in\Sigma}d_\sigma\norm{A_\sigma}_1.
\end{align*}

The space $L^1$ is the predual of the space $L^\infty$ via the following {\it standard} duality bracket.
	$$\la (A_\sigma), (B_{\sigma'}) \ra = \sum_{\sigma\in \Sigma} \text{Tr}(A_\sigma B_\sigma),\;\; (A_\sigma) \in L^1, (B_{\sigma'})\in L^\infty.$$
Whenever we consider $L^\infty$ and $L^1$ as operator spaces, we assume their natural operator space structure
as a von Neumann algebra and the predual of a von Neumann algebra, respectively.

	\begin{prop}\label{prop-L2-embed}
	The formal identities ${\rm id}^c_{2,\infty} : L^2_c \to L^\infty$, ${\rm id}^r_{2,\infty} : L^2_r \to L^\infty$,
	${\rm id}^c_{1,2} : L^1 \to L^2_c$ and ${\rm id}^r_{1,2} : L^1 \to L^2_r$ are complete contractions.
	\end{prop}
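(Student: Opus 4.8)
The plan is to verify each of the four claimed complete contractions by reducing to the simplest building block, namely a single summand $M_d$ with the appropriate normalizing weight, and then assembling the pieces via the direct-sum identity \eqref{eq-directsum}. Since $L^\infty = \ell^\infty\text{-}\bigoplus_\sigma M_{d_\sigma}$ and $L^2_c = \ell^2\text{-}\bigoplus_\sigma \sqrt{d_\sigma}(S^2_{d_\sigma})_c$, a map into $L^\infty$ is a complete contraction iff each of its coordinate maps is (using that the inclusion of one von Neumann algebra summand into the $\ell^\infty$-direct sum is a complete isometry), and dually a map out of $L^1 = \ell^1\text{-}\bigoplus_\sigma d_\sigma S^1_{d_\sigma}$ is a complete contraction iff its restriction to each summand is, with the correct scaling. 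So the whole statement reduces to two model inequalities, one for $\mathrm{id}: \sqrt{d}\,(S^2_d)_c \to M_d$ and one for $\mathrm{id}: d\,S^1_d \to \sqrt{d}\,(S^2_d)_c$, uniformly in $d$; the row versions follow by the same argument applied to $(S^2_d)_r$, or by taking adjoints.

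For the first model inequality, I would use the formula \eqref{eq-cb-norm} for the cb-norm of a map out of a column Hilbert space. Fix $d$ and let $\{e_{ij}\}$ be the matrix units of $M_d$, which after the normalization form an orthonormal basis of $\sqrt{d}\,(S^2_d)_c$ up to the scalar $\sqrt{d}$: precisely, $\{d^{-1/4} e_{ij}\}$ is orthonormal in the Hilbert space $\sqrt{d}\,S^2_d$ since $\|d^{-1/4}e_{ij}\|^2 = \sqrt d\cdot d^{-1/2}\|e_{ij}\|_2^2 = 1$. Plugging $T = \mathrm{id}^c_{2,\infty}$ restricted to this summand into \eqref{eq-cb-norm} gives
\[
\norm{T}_{cb}^2 = \norm{\sum_{i,j} (d^{-1/4}e_{ij})(d^{-1/4}e_{ij})^*}_\infty
= d^{-1/2}\norm{\sum_{i,j} e_{ij}e_{ji}}_\infty = d^{-1/2}\norm{d\cdot I_d}_\infty = d^{1/2},
\]
which is not bounded. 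This shows the naive reading is wrong, so in fact I would instead compute the cb-norm of $T: \sqrt d\,(S^2_d)_c \to M_d$ by recognizing $\sqrt d\,(S^2_d)_c$ as $C_d \otimes R_d$ (column times row, the natural column structure on Hilbert–Schmidt operators weighted by $\sqrt d$ is $C_{d^2}$ only up to the weight): one checks directly that with the weight $\sqrt{d_\sigma}$ built into the $L^2$ norm, the identity $(S^2_d)_c \hookrightarrow M_d$ has cb-norm exactly $1$ because the column operator space structure on $S^2_d$ is, by definition of the Hilbert–Schmidt norm and the standard identification $S^2_d = C_d \otimes_2 \overline{C_d}$, nothing but the restriction of the operator norm along the inclusion $M_d \subseteq B(\ell^2_d)$ composed with the identification of $M_d$ with $C_d \otimes R_d$; here the weight $\sqrt{d_\sigma}$ is precisely the Plancherel normalization that makes this an isometry on the underlying Hilbert space and a complete contraction at the operator space level. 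Then the second model inequality, $\mathrm{id}: d\,S^1_d \to \sqrt d\,(S^2_d)_c$, follows by duality: its adjoint is $\mathrm{id}^r_{2,\infty}: \sqrt d\,(S^2_d)_r \to M_d$ composed with appropriate identifications (using $(L^1)^* = L^\infty$ with the standard bracket, $((S^2_d)_c)^* = (S^2_d)_r$, and that taking adjoints preserves cb-norms), so a complete contraction on one side gives a complete contraction on the other.

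The step I expect to be the main obstacle is getting the Plancherel-type normalizations exactly right, i.e. tracking how the weights $\sqrt{d_\sigma}$ in $L^2$ and $d_\sigma$ in $L^1$ interact with the operator space structures so that all four maps come out as genuine \emph{contractions} rather than merely bounded maps with a $d_\sigma$-dependent constant; my computation above shows that a careless choice of orthonormal basis produces an unbounded answer, so the care is in choosing the normalization consistently with the duality bracket $\la (A_\sigma),(B_{\sigma'})\ra = \sum_\sigma \mathrm{Tr}(A_\sigma B_\sigma)$. Once the single-summand column case $\mathrm{id}^c_{2,\infty}$ is pinned down as a complete contraction, the row case is the mirror image, the two $L^1 \to L^2$ statements are the Banach-space adjoints, and the passage from a single summand to the full direct sums is immediate from \eqref{eq-directsum} together with the fact that $\ell^\infty$- and $\ell^1$-direct sums of operator spaces are the categorical product and coproduct in the relevant sense, so each coordinate contributes independently.
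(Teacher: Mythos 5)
Your overall architecture — reduce to a single summand via \eqref{eq-directsum}, handle the row case symmetrically, and get the two $L^1$ maps by taking (pre)adjoints — is exactly the paper's route, so the framing is fine. The genuine gap is in the one place where the proposition has actual content: the single-summand computation. The norm on the $\sigma$-summand of $L^2$ is $\sqrt{d_\sigma}\,\norm{\cdot}_2$, i.e.\ the \emph{squared} norm is $d_\sigma\norm{\cdot}_2^2$ (read off from $\norm{A}_{L^2}=[\sum_\sigma d_\sigma\norm{A_\sigma}_2^2]^{1/2}$), so an orthonormal basis of $(\sqrt{d}\,S^2_d)_c$ is $\{d^{-1/2}e_{ij}\}$, not $\{d^{-1/4}e_{ij}\}$. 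With the correct normalization, \eqref{eq-cb-norm} gives
\[
\norm{{\rm id}_d}_{cb}^2=\norm{\textstyle\sum_{i,j}(d^{-1/2}e_{ij})(d^{-1/2}e_{ij})^*}_{M_d}
= d^{-1}\norm{\textstyle\sum_{i,j}e_{ii}}_{M_d}=\norm{I_d}_{M_d}=1,
\]
which is precisely the paper's proof. Your ``unbounded answer'' $d^{1/2}$ is an artifact of the wrong normalization, not evidence that the naive approach fails, and it led you to abandon the one argument that actually works.

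The fallback argument you substitute is not correct as stated: the unweighted formal identity $(S^2_d)_c\to M_d$ is \emph{not} a complete contraction — by the same formula \eqref{eq-cb-norm} its cb-norm is exactly $\sqrt{d}$ (equivalently, $(S^2_d)_c\cong C_{d^2}\cong C_d\otimes_h C_d$, while $M_d\cong C_d\otimes_h R_d$, and the formal identity $C_d\to R_d$ has cb-norm $\sqrt d$). So the column structure on $S^2_d$ is not ``the restriction of the operator norm'' of $M_d$; the weight $\sqrt{d_\sigma}$ is exactly what cancels this $\sqrt d$, and your text asserts this (``the Plancherel normalization makes this a complete contraction'') without proving it — indeed you flag the normalization bookkeeping as the unresolved obstacle. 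To repair the proposal, delete the second paragraph's identification claim and simply redo your first computation with the basis $\{d^{-1/2}e_{ij}\}$; the remaining steps (direct sums via \eqref{eq-directsum}, row case, and duality, noting $({\rm id}^c_{1,2})^*={\rm id}^r_{2,\infty}$ and $({\rm id}^r_{1,2})^*={\rm id}^c_{2,\infty}$) then go through as you and the paper both outline.
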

\begin{proof}
We will only check the case of ${\rm id}^c_{2,\infty}$ since the case ${\rm id}^r_{2,\infty}$ is similar. Moreover, $({\rm id}^c_{1,2})^* = {\rm id}^r_{2,\infty}$ and
$({\rm id}^r_{1,2})^* = {\rm id}^c_{2,\infty}$.

Since
	$$L^2_c = (\ell^2\text{-}\bigoplus_{\sigma\in \Sigma}\sqrt{d_\sigma}S^2_{d_\sigma})_c,$$
it is enough to show that the formal identity ${\rm id}_n : (\sqrt{n}S^2_{n})_c \rightarrow M_{n}$
is a complete contraction for any $n\ge 1$.
Indeed, by \eqref{eq-directsum} we have
	$$\norm{{\rm id}^c_{2,\infty}}_{cb} = \sup_{\sigma\in \Sigma} \norm{Q_\sigma \circ {\rm id}^c_{2,\infty}}_{cb},$$
where
	$$Q_\sigma : \ell^\infty\text{-}\bigoplus_{\rho \in \Sigma} M_{d_\rho} \rightarrow M_{d_\sigma}$$
is the canonical projection, which is completely contractive. Moreover, we have
	$$Q_\sigma \circ {\rm id}^c_{2,\infty} = {\rm id}_\sigma \circ P_\sigma,\; \sigma\in \Sigma$$
where ${\rm id}_\sigma : (\sqrt{d_\sigma}S^2_{d_\sigma})_c \rightarrow M_{d_\sigma}$ is the formal identity and
	$$P_\sigma : \ell^2\text{-}\bigoplus_{\rho \in \Sigma} \sqrt{d_\rho} S^2_{d_\rho} \rightarrow \sqrt{d_\sigma} S^2_{d_\sigma}$$
is the canonical orthogonal projection, which explains that ${\rm id}^c_{2,\infty}$ is completely contractive.

For the claim itself we let $\{e_{ij}\}^{n}_{i,j=1}$ be the matrix units in $M_{n}$. Since
	$$\norm{e_{ij}}_{\sqrt{n}S^2_{n}} = \sqrt{n},$$
\eqref{eq-cb-norm} tells us that
	$$\norm{{\rm id}_n}_{cb} = \norm{\sum^{n}_{i,j=1} n^{-\frac{1}{2}}e_{ij} (n^{-\frac{1}{2}}e_{ij})^*}^{\frac{1}{2}}_{M_{n}}
	= \norm{\sum^{n}_{i=1}e_{ii}}_{M_{n}}^\frac{1}{2} = 1.$$
\end{proof}

We need to understand the $L^\infty$-module structure on $L^2$ as follows.

	\begin{prop}\label{prop-L-inf-mod}
	The space $L^2_c$ is a left operator $L^\infty$-module under the multiplication
		$$AB = (A_\sigma B_\sigma)_{\sigma\in \Sigma}$$
	with $A= (A_\sigma )_{\sigma\in \Sigma} \in L^\infty$ and $B= (B_\sigma )_{\sigma\in \Sigma} \in L^2$, and $L^2_r$ is a right operator $L^\infty$-module under a similar multiplication.
	\end{prop}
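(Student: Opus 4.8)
The plan is to realise $L^\infty$ concretely as a von Neumann algebra acting on the Hilbert space $L^2$ by coordinatewise left multiplication, and then to assemble the representation required by the definition of an operator left module out of the standard corner realisation $H_c=B(\mathbb{C},H)$ recorded at the beginning of this section. First I would observe that for $A=(A_\sigma)\in L^\infty$ and $B=(B_\sigma)\in L^2$ one has $\norm{A_\sigma B_\sigma}_2\le\norm{A_\sigma}_\infty\norm{B_\sigma}_2$, so that $AB=(A_\sigma B_\sigma)_\sigma$ again lies in $L^2$ and $\norm{AB}_{L^2}\le\norm{A}_{L^\infty}\norm{B}_{L^2}$; hence $\pi(A)B:=AB$ defines a bounded operator $\pi(A)$ on the Hilbert space $L^2$. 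The weights $\sqrt{d_\sigma}$ are harmless here, as they merely rescale the inner product on each coordinate and commute with left multiplication. Computing with the Hilbert--Schmidt inner product on each $S^2_{d_\sigma}$ shows that the adjoint of left multiplication by $A_\sigma$ is left multiplication by $A_\sigma^*$, so $\pi:L^\infty\to B(L^2)$ is a unital normal $*$-homomorphism and therefore a complete contraction.

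Next I would set $K=\mathbb{C}\oplus L^2$ and define $j_{L^\infty}:L^\infty\to B(K)$ by $j_{L^\infty}(A)=0\oplus\pi(A)$; being a $*$-homomorphism, it is completely contractive. Recalling that $L^2_c=(L^2)_c=B(\mathbb{C},L^2)$ carries, by definition, the operator space structure inherited from its embedding as the lower-left corner of $B(K)$, I would let $j_{L^2_c}:L^2_c\to B(K)$ be precisely this corner embedding, which is then a complete isometry. A $2\times2$ block-matrix multiplication with respect to the decomposition $K=\mathbb{C}\oplus L^2$ gives
$$
j_{L^\infty}(A)\,j_{L^2_c}(B)
=\begin{pmatrix}0&0\\0&\pi(A)\end{pmatrix}\begin{pmatrix}0&0\\B&0\end{pmatrix}
=\begin{pmatrix}0&0\\\pi(A)B&0\end{pmatrix}
=\begin{pmatrix}0&0\\AB&0\end{pmatrix}
=j_{L^2_c}(AB),
$$
which is exactly the identity $j_{L^\infty}(X)j_{L^2_c}(Y)=j_{L^2_c}(\varphi(X,Y))$ making $L^2_c$ an operator left $L^\infty$-module.

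For $L^2_r$ I would argue analogously: $L^2_r=B(\overline{L^2},\mathbb{C})$ embeds completely isometrically into $B(\mathbb{C}\oplus\overline{L^2})$ as the upper-right corner, $L^\infty$ acts on $\overline{L^2}$ through (the conjugate of) coordinatewise right multiplication by a normal $*$-homomorphism, and the same corner computation performed on the opposite side yields $j_{L^2_r}(B)\,j_{L^\infty}(A)=j_{L^2_r}(BA)$. Alternatively one can deduce the row case from the column case by passing to the dual module, using the complete isometry $(H_c)^*\cong\overline{H}_r$ together with the fact that the adjoint of a left operator module action is a right operator module action. I do not expect any real obstacle in this lemma; the only point demanding a little care is the verification that the corner embeddings reproduce exactly the prescribed column and row Hilbert space structures on $L^2$, and that is immediate from the identifications $H_c=B(\mathbb{C},H)$ and $H_r=B(\overline{H},\mathbb{C})$ fixed at the start of the section.
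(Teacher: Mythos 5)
Your proof is correct and follows essentially the same route as the paper: represent $L^\infty$ by coordinatewise left multiplication on the Hilbert space $L^2$ and embed $L^2_c=B(\mathbb{C},L^2)$ completely isometrically as concrete operators so that the module action becomes operator composition. The only cosmetic difference is that you realise $L^2_c$ as a corner of $B(\mathbb{C}\oplus L^2)$, whereas the paper embeds it into $B(L^2)$ via the rank-one-type operators $Z\mapsto N_Z$, $N_Z(A)=\la A,\psi\ra Z$ for a fixed unit vector $\psi$; both verifications of $j_{L^\infty}(A)j_{L^2_c}(B)=j_{L^2_c}(AB)$ are the same computation.
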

\begin{proof}
We only check the case of $L^2_c$.
Note that $L^2_c = B(\mathbb{C}, L^2)$ can be completely isometrically embedded in $B(L^2)$ by the embedding
	$$j_2 : L^2_c \hookrightarrow B(L^2), \;\; Z\mapsto N_Z,$$
where $N_Z(A) = \la A, \psi \ra Z$ for some fixed unit vector $\psi \in L^2$.
If we consider the following standard representation of $L^\infty$
	$$j_\infty : L^\infty \hookrightarrow B(L^2),\;\; X\mapsto M_X,$$
where $M_X$ is the left multiplication by $X$. Then, we have
	$$j_\infty(X)j_2(Z) = j_2(XZ)$$
for any $X\in L^\infty$ and $Z \in L^2$. Thus, $L^2_c$ is a left operator $L^\infty$-module.

\end{proof}

The above module structure can be easily extended to vector-valued cases.

	\begin{lem}\label{lem-dualopaction}
	Let $M$ and $N$ be von Neumann algebras and $E$ a dual left operator $M$-module, i.e. a dual operator space which is a left operator $M$-module and whose
predual is a right operator $M$-module. Then $E\overline{\otimes}N$ is a left operator $M\overline{\otimes}N$-module.
	\end{lem}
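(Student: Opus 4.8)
The plan is to realise the module structure concretely on a Hilbert space and then to transport it through the normal spatial tensor product. Since $E$ is a dual left operator $M$-module, its action $M\times E\to E$ is separately weak$^*$-continuous, so the representation theorem for normal dual operator modules (see, e.g., \cite{BLe04}) furnishes a Hilbert space $H$, a weak$^*$-continuous complete isometry $\Phi:E\to B(H)$ onto a weak$^*$-closed subspace, and a normal unital $*$-homomorphism $\theta:M\to B(H)$ such that
$$\Phi(X\cdot Y)=\theta(X)\,\Phi(Y)\qquad(X\in M,\ Y\in E).$$
Fix also a normal faithful representation $N\subseteq B(L)$. Then $\theta\overline{\otimes}\mathrm{id}_N$ is a normal unital $*$-homomorphism from $M\overline{\otimes}N$ into $B(H)\overline{\otimes}B(L)=B(H\otimes_2 L)$, in particular a complete contraction.

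Next I would identify $E\overline{\otimes}N$ inside $B(H\otimes_2 L)$. Because $\Phi$ is a weak$^*$-continuous complete isometry with weak$^*$-closed range, $\Phi\overline{\otimes}\mathrm{id}_N$ is again a weak$^*$-continuous complete isometry, and it carries $E\overline{\otimes}N$ onto the weak$^*$-closure $\widetilde{E}$ of $\Phi(E)\otimes N$ inside $B(H)\overline{\otimes}N\subseteq B(H\otimes_2 L)$; write $j_E:E\overline{\otimes}N\to B(H\otimes_2 L)$ for the resulting complete isometry. On elementary tensors one computes $\big(\theta(X)\otimes n\big)\big(\Phi(Y)\otimes n'\big)=\theta(X)\Phi(Y)\otimes nn'=\Phi(X\cdot Y)\otimes nn'\in\Phi(E)\otimes N$; since multiplication in $B(H\otimes_2 L)$ is separately weak$^*$-continuous and $\theta(M)\otimes N$, $\Phi(E)\otimes N$ are weak$^*$-dense in $(\theta\overline{\otimes}\mathrm{id}_N)(M\overline{\otimes}N)$ and in $\widetilde{E}$ respectively, closing up one variable at a time shows that $\eta\,\xi\in\widetilde{E}$ for every $\eta\in(\theta\overline{\otimes}\mathrm{id}_N)(M\overline{\otimes}N)$ and every $\xi\in\widetilde{E}$. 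Transporting back through $j_E$ I define
$$\varphi(U,V)=j_E^{-1}\big((\theta\overline{\otimes}\mathrm{id}_N)(U)\,j_E(V)\big),\qquad U\in M\overline{\otimes}N,\ V\in E\overline{\otimes}N,$$
which on elementary tensors is $\varphi(X\otimes n,\,Y\otimes n')=(X\cdot Y)\otimes nn'$; associativity of operator multiplication in $B(H\otimes_2 L)$ makes $\varphi$ a left $M\overline{\otimes}N$-module action on $E\overline{\otimes}N$. Finally, taking $K=H\otimes_2 L$, the complete isometry $j_E$, and the complete contraction $j_{M\overline{\otimes}N}:=\theta\overline{\otimes}\mathrm{id}_N$, the identity $j_{M\overline{\otimes}N}(U)\,j_E(V)=j_E(\varphi(U,V))$ holds by the very definition of $\varphi$, so $E\overline{\otimes}N$ is a left operator $M\overline{\otimes}N$-module.

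The step that requires the most care is the assertion that $\Phi\overline{\otimes}\mathrm{id}_N$ is a complete isometry whenever $\Phi$ is a weak$^*$-continuous complete isometry with weak$^*$-closed range. Writing $\Phi$ as the adjoint of a complete metric surjection between the relevant preduals, this reduces to the fact that the Haagerup tensor product preserves complete metric surjections, equivalently that the normal spatial tensor product does not depend on the particular weak$^*$-homeomorphic completely isometric realisations of its factors inside $B(\,\cdot\,)$'s \cite[Chapter 7]{ER}. The remaining points—separate (not joint) weak$^*$-continuity of multiplication in $B(H\otimes_2 L)$ in the density argument, and the associativity and module bookkeeping—are routine once everything is placed inside the ambient $B(H\otimes_2 L)$.
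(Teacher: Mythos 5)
Your proof is correct and follows essentially the same route as the paper's: realise the dual module spatially via the representation theorem for dual operator modules (\cite[Theorem 3.8.3]{BLe04}), tensor with a normal representation of $N$, and use weak$^*$-density of the algebraic tensor products together with separate weak$^*$-continuity of multiplication in $B(H\otimes_2 L)$ to see that the product stays in the weak$^*$-closed copy of $E\overline{\otimes}N$. The only quibble is your aside reducing the completely isometric behaviour of $\Phi\overline{\otimes}\mathrm{id}_N$ to the Haagerup tensor product preserving complete metric surjections, which is not the pertinent tensor norm here; the fact actually needed is just that the normal spatial tensor product is independent of the weak$^*$-homeomorphic completely isometric realisations of its factors, which you also state and which the paper's ``we may assume'' uses implicitly.
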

\begin{proof}
By \cite[Theorem 3.8.3]{BLe04} we may assume that there is a Hilbert space $H$ for which $M, E\subset B(H)$ as weak$^*$-closed subspaces, and $E$ is a left $M$-module. We let $x\in M\overline{\otimes}N$ and $y\in E\overline{\otimes}N$ and find bounded nets $(x_\alpha)\subset M\otimes N$ and $(y_\beta)\subset E\otimes N$ which converge to $x$ and $y$, respectively, in the weak$^*$-topology.  Then $x_\alpha y_\beta\in E\overline{\otimes} N$ and converges (with limit taken in either order) to $xy$ in $B(H)\overline{\otimes}N$ in the weak$^*$-topology, and hence in the closed subspace $E\overline{\otimes} N$.
\end{proof}

Now we define non-commutative Littlewood multiplier spaces.
	\begin{defn}
	We define the spaces $\mathcal{T}^2_c$ and $\mathcal{T}^2_r$ by
		$$\mathcal{T}^2_c=\mathcal{T}^2_c(\Sigma)=L^2_c\overline{\otimes}L^\infty\;\; \text{and}\;\;
		\mathcal{T}^2_r=\mathcal{T}^2_r(\Sigma)=L^\infty\overline{\otimes}L^2_r.$$
	\end{defn}

\begin{rem}{\rm
In the classical case, namely when $d_\sigma = 1$ for all $\sigma$, we have $L^p=\ell^p=\ell^p(\Sigma)$ for $p=1,2,\infty$. It is straightforward to compute, via the identifications $\mathcal{T}^2_r\cong CB(\overline{\ell^2}_c,\ell^\infty)=B(\overline{\ell^2},\ell^\infty)$, and $\mathcal{T}^2_c\cong CB(\ell^1,\ell^2_c)=B(\ell^1,\ell^2)$ that
\begin{align*}
\mathcal{T}^2_r=\left\{[a_{\sigma,\tau}]:\sup_\sigma \sum_\tau |a_{\sigma,\tau}|^2<\infty\right\},\;
\mathcal{T}^2_c=\left\{[b_{\sigma,\tau}]:\sup_\tau \sum_\sigma |b_{\sigma,\tau}|^2<\infty\right\}.
\end{align*}
Hence we recover the classical Littlewood function space $\mathcal{T}^2$ (\cite{BF}) by the sum of the above two space $\mathcal{T}^2_r+\mathcal{T}^2_c$. Note that the non-commutative setting above forces us to consider left (row) and right (column) cases separately. Note that we are using the term ``Littlewood multipliers" instead of ``Littlewood functions" as is used in the literature, which suits better in non-commutative contexts.
}
\end{rem}

	\begin{cor}\label{cor-moreopmod}
	The space $\mathcal{T}^2_c$ is a left, and $\mathcal{T}^2_r$ is a right, operator $L^\infty\overline{\otimes}L^\infty$-module.
	\end{cor}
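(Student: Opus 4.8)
The plan is to deduce Corollary~\ref{cor-moreopmod} directly from Lemma~\ref{lem-dualopaction}, by recognizing $\mathcal{T}^2_c = L^2_c \overline{\otimes} L^\infty$ as an instance of the construction $E \overline{\otimes} N$ with $E = L^2_c$, $M = L^\infty$, and $N = L^\infty$. First I would invoke Proposition~\ref{prop-L-inf-mod}, which tells us that $L^2_c$ is a left operator $L^\infty$-module under the coordinatewise multiplication $AB = (A_\sigma B_\sigma)_\sigma$. To apply Lemma~\ref{lem-dualopaction} I must check that $L^2_c$ is in fact a \emph{dual} left operator $L^\infty$-module, i.e.\ that $L^2_c$ is a dual operator space and that its predual carries a compatible right operator $L^\infty$-module structure. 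The predual of $L^2_c$ is $L^2_r$ (up to complex conjugation, via the standard $L^2$ self-duality pairing), and by the second half of Proposition~\ref{prop-L-inf-mod}, $L^2_r$ is a right operator $L^\infty$-module under the analogous coordinatewise multiplication; one checks that this module action is weak$^*$-continuous in the appropriate variable, so that it is genuinely the pre-adjoint of the $L^2_c$-action. This verifies the hypotheses of Lemma~\ref{lem-dualopaction} with $M = N = L^\infty$.

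With those hypotheses in hand, Lemma~\ref{lem-dualopaction} immediately yields that $\mathcal{T}^2_c = L^2_c \overline{\otimes} L^\infty$ is a left operator $L^\infty \overline{\otimes} L^\infty$-module, where the module action is the weak$^*$-continuous extension of $(X \otimes Y)(Z \otimes W) = (XZ) \otimes (YW)$ on elementary tensors. The case of $\mathcal{T}^2_r = L^\infty \overline{\otimes} L^2_r$ is handled symmetrically: here one writes it as $N \overline{\otimes} E$ with $E = L^2_r$ a \emph{dual right} operator $L^\infty$-module (whose predual $L^2_c$ is a left operator $L^\infty$-module by Proposition~\ref{prop-L-inf-mod}), and one applies the evident right-handed analogue of Lemma~\ref{lem-dualopaction} — or simply takes adjoints/opposite algebras to reduce to the left-handed statement already proved.

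The only genuine point requiring care — and hence the main (mild) obstacle — is bookkeeping around duality: one must confirm that the predual module structures on $L^2_r$ and $L^2_c$ described in Proposition~\ref{prop-L-inf-mod} are precisely the pre-adjoints of the dual module structures, so that the phrase ``dual left operator $M$-module'' in Lemma~\ref{lem-dualopaction} applies verbatim. This is a routine weak$^*$-density argument: the coordinatewise action is plainly normal in the $L^\infty$-variable (multiplication in a von Neumann algebra is separately weak$^*$-continuous on bounded sets), and on the dense algebraic tensor products the two descriptions visibly agree, so they agree after weak$^*$-closure. Everything else is a direct citation of the preceding two results, so the proof is short.
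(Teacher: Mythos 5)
Your proof is correct and follows essentially the same route as the paper: both deduce the statement from Proposition \ref{prop-L-inf-mod} combined with Lemma \ref{lem-dualopaction}, applied with $E=L^2_c$ (resp.\ $L^2_r$) and $M=N=L^\infty$. The paper disposes of the ``dual operator module'' hypothesis simply by noting that $L^2_c$ is reflexive, which is exactly the point your explicit predual bookkeeping verifies by hand.
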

\begin{proof} We note that $L^2_c$ is reflexive, and thus a dual $L^\infty$-module, and is a left operator
$L^\infty$-module from Proposition \ref{prop-L-inf-mod}.
We thus apply Lemma \ref{lem-dualopaction} directly to see the result for $\mathcal{T}^2_c$. The result for $\mathcal{T}^2_r$ follows similarly.
\end{proof}

\begin{prop}\label{prop-embedding1}
	The following formal identities are complete contractions.
		$$I_c : \mathcal{T}^2_c \rightarrow L^\infty \otimes_{eh} L^\infty \;\;\text{and}\;\; I_r : \mathcal{T}^2_r \rightarrow L^\infty \otimes_{eh} L^\infty$$
	\end{prop}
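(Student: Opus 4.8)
The plan is to verify that each formal identity $I_c:\mathcal{T}^2_c\to L^\infty\otimes_{eh}L^\infty$ and $I_r:\mathcal{T}^2_r\to L^\infty\otimes_{eh}L^\infty$ is a complete contraction by translating both sides into the language of factorization through column (resp.\ row) Hilbert spaces, and then observing that the required map is a composition of complete contractions supplied by Proposition \ref{prop-L2-embed}. Recall from the preliminaries that $L^\infty\otimes_{eh}L^\infty=(L^1\otimes_h L^1)^*\cong\Gamma^c_2(L^1,L^\infty)$ completely isometrically, via $A\otimes B\mapsto u$ with $u(X)=\langle X,B\rangle A$, and that $\mathcal{T}^2_c=L^2_c\overline{\otimes}L^\infty$ sits completely isometrically inside $CB(L^1,L^2_c)$ through the same algebraic identification (the weak$^*$-density of $L^2_c\otimes L^\infty$ inside $CB(L^1,L^2_c)$ holding because $L^1$, being the predual of a von Neumann algebra, has the OAP). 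So both sides are realized as spaces of maps out of $L^1$, and $I_c$ is, on elementary tensors and hence on all of $\mathcal{T}^2_c$, literally the map that sends $u\in CB(L^1,L^2_c)$ to the same $u$ viewed as an element of $\Gamma^c_2(L^1,L^\infty)$.

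The key point is then that any $u\in CB(L^1,L^2_c)$ automatically factors through a column Hilbert space with control on the $\gamma^c_2$-norm: writing $u$ as the composition
$$L^1\;\xrightarrow{\ {\rm id}^c_{1,2}\ }\;L^2_c\;\xrightarrow{\ v\ }\;L^2_c\;\xrightarrow{\ {\rm id}^c_{2,\infty}\ }\;L^\infty,$$
where $v\in CB(L^2_c,L^2_c)$ is determined by $u={\rm id}^c_{1,2}$ precomposed appropriately — more precisely, one factors the given $u\in CB(L^1,L^2_c)$ as $u=v\circ{\rm id}^c_{1,2}$ is not quite right since ${\rm id}^c_{1,2}$ is not surjective, so instead I would argue directly on elementary tensors $u=u_{A\otimes B}$ with $A\in L^2$, $B\in L^\infty$, factoring $u_{A\otimes B}(X)=\langle X,B\rangle A$ through $\mathbb{C}$: that is, $u_{A\otimes B}=A_c\circ(\iota_B)$ where $\iota_B:L^1\to\mathbb{C}$, $X\mapsto\langle X,B\rangle$ and $A_c:\mathbb{C}\to L^2_c$, $\lambda\mapsto\lambda A$, and then post-compose with ${\rm id}^c_{2,\infty}$. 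Since $\gamma^c_2$ of the resulting map into $L^\infty$ is dominated by $\|{\rm id}^c_{2,\infty}\|_{cb}\,\gamma^c_2(u_{A\otimes B}\text{ into }L^2_c)\le\|u_{A\otimes B}\|_{CB(L^1,L^2_c)}$, using $\|{\rm id}^c_{2,\infty}\|_{cb}\le 1$ from Proposition \ref{prop-L2-embed}. Taking matricial norms and the infimum over factorizations, and passing from elementary tensors to weak$^*$-limits, yields $\|I_c\|_{cb}\le 1$.

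I would implement this at the matricial level to get the \emph{complete} contraction rather than just contraction: for $(u_{ij})\in M_n(\mathcal{T}^2_c)\subseteq M_n(CB(L^1,L^2_c))$, the defining formula for $\gamma^c_{2,n}$ asks for a factorization $(u_{ij})=B_{1,n}\circ A$ with $A\in CB(L^1,M_{1,n}(H_c))$ and $B\in CB(H_c,M_{n,1}(L^\infty))$; taking $H=L^2$, $A$ the amplification of the given map into $M_{1,n}((L^2)_c)$ regarded as a map into $M_{1,n}(H_c)$, and $B=({\rm id}^c_{2,\infty})_{n,1}$, we get $\gamma^c_{2,n}((I_c)_n((u_{ij})))\le\|({\rm id}^c_{2,\infty})_{n,1}\|_{cb}\,\|(u_{ij})\|_{M_n(CB(L^1,L^2_c))}\le\|(u_{ij})\|_{M_n(\mathcal{T}^2_c)}$. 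The row case $I_r$ is entirely symmetric, using ${\rm id}^r_{1,2}$ and ${\rm id}^r_{2,\infty}$ and $\mathcal{T}^2_r=L^\infty\overline{\otimes}L^2_r\hookrightarrow CB(\overline{L^2}_c,L^\infty)$ together with the row analogue of the $\Gamma_2$-description of $\otimes_{eh}$. The main obstacle I anticipate is bookkeeping: making the identifications $\mathcal{T}^2_c\cong CB(L^1,L^2_c)$ and $L^\infty\otimes_{eh}L^\infty\cong\Gamma^c_2(L^1,L^\infty)$ compatible (so that $I_c$ really is "the identity map" under these pictures), and correctly matching the normal spatial tensor product $\overline{\otimes}$ with the $CB$-space, which needs the OAP of $L^1$; once these identifications are pinned down, the cb-contraction is immediate from Proposition \ref{prop-L2-embed}.
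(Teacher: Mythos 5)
Your proposal is correct and follows essentially the same route as the paper: identify $\mathcal{T}^2_c\subseteq CB(L^1,L^2_c)$ and $L^\infty\otimes_{eh}L^\infty\cong\Gamma^c_2(L^1,L^\infty)$, view $I_c$ as post-composition with ${\rm id}^c_{2,\infty}$, and at the matricial level exhibit the factorization $(u_{ij})=\bigl(({\rm id}^c_{2,\infty})_{n,1}\bigr)_{1,n}\circ U$ through a column Hilbert space, exactly as in the paper's proof. The only slip is writing $H=L^2$: the Hilbert space in the factorization should be the column direct sum with $H_c=M_{n,1}(L^2_c)\cong C_n\otimes_h L^2_c$ (and the OAP remark is superfluous, since only the completely isometric inclusion of $\mathcal{T}^2_c$ into $CB(L^1,L^2_c)$, which holds by definition of $\overline{\otimes}$, is needed).
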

\begin{proof}
We again check the case of $I_c$ only. The other one follows similarly.

Note that $\mathcal{T}^2_c = L^2_c\overline{\otimes}L^\infty$ and $L^\infty \otimes_{eh} L^\infty$ can be identified with
$CB(L^1, L^2_c)$ and $\Gamma^c_2(L^1, L^\infty)$ under the (essentially) same identification
	$$A \otimes B \mapsto u,\;\; \text{where}\;\; u(X) = \la X, B\ra A.$$
Thus it is enough to show that the map
	$$CB(L^1, L^2_c) \to \Gamma^c_2(L^1, L^\infty),\;\; X \mapsto {\rm id}^c_{2,\infty}\circ X$$
is a complete contraction, where ${\rm id}^c_{2,\infty}: L_c \to L^\infty$ is the formal identity.
We start with an element
	$$(u_{ij}) \in M_n(CB(L^1, L^2_c)),$$
where $u_{ij} : L^1 \rightarrow L^2_c$. We set
	$$U : L^1 \rightarrow M_{1,n}(M_{n,1}(L^2_c)),\;\; x \mapsto [(u_{ij}(x))^n_{i=1}]^n_{j=1}$$
and
	$$V = {\rm id}_{M_{n,1}}\otimes {\rm id}^c_{2,\infty} : M_{n,1}(L^2_c) \rightarrow M_{n,1}(L^\infty).$$
Since
	$$M_{1,n}(M_{n,1}(L^2_c)) \cong M_n(L^2_c)$$
naturally we have $\norm{U}_{cb} = \norm{(u_{ij})}_{cb}$, and clearly by Proposition \ref{prop-L2-embed}, $\norm{V}_{cb} = 1$.
Moreover, we have
	$$(u_{ij}) = V_{1,n} \circ U$$
and $M_{n,1}(L^2_c) \cong C_n \otimes_h L^2_c$ is also a column Hilbert space.
Thus
$$\gamma^c_{2,n}(u_{ij}) \leq \|V\|_{cb}\|U\|_{cb}=\|(u_{ij})\|_{M_n(CB(L^1, L^2_c))}.$$
This completes the proof.

\end{proof}

Combining Corollary \ref{cor-moreopmod} and Proposition \ref{prop-embedding1} we get the following
	\begin{thm}\label{thm-multiplier}
	Every element in $\mathcal{T}^2_c$ (resp. $\mathcal{T}^2_r$) is a right (resp. left)
	cb-multiplier from $L^\infty \overline{\otimes} L^\infty$ into $L^\infty \otimes_{eh} L^\infty$ with the same cb-norm.
	\end{thm}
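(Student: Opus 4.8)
The plan is to combine the two structural facts already established: Corollary~\ref{cor-moreopmod}, which says that $\mathcal{T}^2_c$ is a left operator $L^\infty\overline{\otimes}L^\infty$-module, and Proposition~\ref{prop-embedding1}, which gives a complete contraction $I_c : \mathcal{T}^2_c \to L^\infty\otimes_{eh}L^\infty$. An element $T\in\mathcal{T}^2_c$ should act on $L^\infty\overline{\otimes}L^\infty$ by the module multiplication and land, via $I_c$, inside $L^\infty\otimes_{eh}L^\infty$; the content of the theorem is that this is a completely bounded map with cb-norm equal to the norm of $T$ in $\mathcal{T}^2_c$. I would treat only the $\mathcal{T}^2_c$ case, the $\mathcal{T}^2_r$ case being symmetric (replacing column by row, left by right).

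First I would make the multiplication map explicit. For fixed $T\in\mathcal{T}^2_c=L^2_c\overline{\otimes}L^\infty$ define $\Phi_T : L^\infty\overline{\otimes}L^\infty \to L^\infty\otimes_{eh}L^\infty$ by $\Phi_T(S) = I_c(T\cdot S)$, where $T\cdot S$ is the right $L^\infty\overline{\otimes}L^\infty$-module action from Corollary~\ref{cor-moreopmod} (note that $\mathcal{T}^2_c$ is a \emph{left} module over $L^\infty\overline{\otimes}L^\infty$ in the statement of Corollary~\ref{cor-moreopmod}, but the relevant action turning $T$ into a multiplier is the one compatible with the first leg; I would state the multiplication precisely as $(T\cdot S)_\sigma$ acting entrywise and on the appropriate leg, mirroring the formula $AB=(A_\sigma B_\sigma)$ of Proposition~\ref{prop-L-inf-mod}). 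Since the module action is an operator-module action, its linearization extends completely boundedly; more precisely the map $S\mapsto T\cdot S$ has cb-norm at most $\|T\|_{\mathcal{T}^2_c}$, because an operator module structure gives a complete contraction $\mathcal{T}^2_c\otimes_h(L^\infty\overline{\otimes}L^\infty)\to\mathcal{T}^2_c$ and hence, by freezing the first variable, $\|T\cdot(\,\cdot\,)\|_{cb}\le\|T\|_{\mathcal{T}^2_c}$. Composing with the complete contraction $I_c$ of Proposition~\ref{prop-embedding1} gives $\|\Phi_T\|_{cb}\le\|T\|_{\mathcal{T}^2_c}$.

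For the reverse inequality $\|\Phi_T\|_{cb}\ge\|T\|_{\mathcal{T}^2_c}$ I would test $\Phi_T$ against the unit $\mathbf{1}=(I_{d_\sigma})_\sigma$ of $L^\infty$, or rather against $\mathbf{1}\otimes\mathbf{1}\in L^\infty\overline{\otimes}L^\infty$ (more carefully, against $\mathbf{1}\otimes e$ for a unit $e$ in the second leg), and observe that $T\cdot(\mathbf{1}\otimes\mathbf{1})$ recovers $T$ itself; then since $I_c$ on $\mathcal{T}^2_c$ is isometric onto its image (this is the content that should be extracted from, or is implicit in, the identification $L^\infty\otimes_{eh}L^\infty\cong\Gamma^c_2(L^1,L^\infty)$ together with $\mathcal{T}^2_c\cong CB(L^1,L^2_c)$ and Proposition~\ref{prop-L2-embed}), one gets $\|\Phi_T\|\ge\|I_c(T)\|=\|T\|_{\mathcal{T}^2_c}$, and the same argument at the matrix level with $(T_{ij})\in M_n(\mathcal{T}^2_c)$ gives the completely isometric statement. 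I expect the main obstacle to be precisely this point: showing $I_c$ is a complete \emph{isometry} and not merely a complete contraction, i.e. that the inclusion of Littlewood multipliers into the extended Haagerup tensor product does not shrink norms. This requires going back to the factorization description, exhibiting for a given $T\in\mathcal{T}^2_c$ a norming factorization through $L^2_c$ that is not improved by allowing factorizations through an arbitrary column Hilbert space, which amounts to the sharpness half of Proposition~\ref{prop-L2-embed} combined with the reflexivity of $L^2_c$; I would flag this as the step needing the most care, while the module-multiplier bookkeeping is routine once the action is written down correctly.
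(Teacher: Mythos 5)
Your first paragraph reproduces what is in fact the paper's entire proof: Theorem \ref{thm-multiplier} is obtained simply by composing the completely contractive module action of Corollary \ref{cor-moreopmod} with the complete contraction $I_c$ of Proposition \ref{prop-embedding1}, giving $\norm{\Phi_T}_{cb}\le\norm{T}_{\mathcal{T}^2_c}$. One bookkeeping point: since $\mathcal{T}^2_c$ is a \emph{left} operator $L^\infty\overline{\otimes}L^\infty$-module, the right-multiplier map determined by $T$ is $S\mapsto I_c(S\cdot T)$ rather than $I_c(T\cdot S)$; with that correction, the upper-bound half of your argument is exactly the intended one.

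The second half of your proposal, however, rests on a step that fails. You propose to get $\norm{\Phi_T}_{cb}\ge\norm{T}_{\mathcal{T}^2_c}$ by evaluating at the identity and proving that $I_c$ is a complete \emph{isometry}; you rightly flag this as the delicate point, but it is false, and no argument can repair it because the asserted equality itself fails. Already in the classical case $d_\sigma\equiv 1$ with $\Sigma=\{1,\dots,n\}$, take $T=[b_{\sigma\tau}]$ with $b_{\sigma\tau}=\delta_{\tau\tau_0}$, the indicator of a single column. Then $\norm{T}_{\mathcal{T}^2_c}=\sqrt{n}$, while $I_c(T)$, viewed in $\ell^\infty\otimes_{eh}\ell^\infty$, i.e.\ as a Schur multiplier, has norm $1$; so $I_c$ is not isometric. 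Worse, the multiplier map $\Phi_T$ itself factors as the restriction of $X\in\ell^\infty(\Sigma\times\Sigma)$ to the column $\tau_0$ (a complete contraction onto $\ell^\infty(\Sigma)$) followed by $a\mapsto a\otimes\delta_{\tau_0}$, so $\norm{\Phi_T}_{cb}\le 1$, which is much smaller than $\sqrt{n}$. Consequently the phrase ``with the same cb-norm'' in the statement can only be read as the estimate $\norm{\Phi_T}_{cb}\le\norm{T}_{\mathcal{T}^2_c}$ inherited from the composition --- which is all that is ever used later (in the torus theorem and in the positive results for Lie groups) --- and the reverse inequality, together with the claimed complete isometry of $I_c$, should simply be dropped from your proof.
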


\section{Beurling--Fourier algebras on compact groups}\label{sec-BF-alg-gen}

In this section we collect basic materials concerning Beurling--Fourier algebras on compact groups.

\subsection{Preliminaries} Let $G$ be a compact group. We will use the notation
	$$\sigma \subset \pi\otimes \pi',\; \pi,\pi'\in \widehat{G}$$
which implies that $\sigma \in \widehat{G}$ appears in the irreducible decomposition of $\pi\otimes \pi'$.

The group von Neumann algebra $VN(G)$ of G is defined by
	$$VN(G) = \{\lambda(x): x\in G\}'' \subset B(L^2(G)),$$
where $\lambda$ is the left regular representation of $G$.
$VN(G)$ is equipped with the co-multiplication
	$$\Gamma : VN(G) \to VN(G)\vnt VN(G),\;\; \lambda(x) \mapsto \lambda(x)\otimes \lambda(x).$$
Using representation theory of $G$ we have an equivalent formulation of $VN(G)$, namely
	$$VN(G) \cong \ell^\infty\text{-}\bigoplus_{\pi\in \widehat{G}} M_{d_\pi}$$
under the $*$-isomorphism
	$$\lambda(x) \mapsto (\bar{\pi}(x))_{\pi\in \widehat{G}},\; x\in G.$$
We note that $VN(G)$ acting on $L^2(G)$ is unitarily equivalent to $\ell^\infty\text{-}\bigoplus_{\pi\in \widehat{G}} M_{d_\pi}$ as a von Neumann algebra acting on $\ell^2\text{-}\bigoplus_{\pi\in \widehat{G}} \sqrt{d_\pi}S^2_{d_\pi}$ by left multiplication.

We will frequently use the above identification without further comment. For example, we will understand $(A(\pi))_\pi \in \ell^\infty\text{-}\bigoplus_{\pi\in \widehat{G}} M_{d_\pi}$ as an element of $VN(G)$. By abuse of notation we will denote $\Gamma$ transferred to
	$$\ell^\infty\text{-}\bigoplus_{\pi\in \widehat{G}} M_{d_\pi} \to
	\Big(\ell^\infty\text{-}\bigoplus_{\pi\in \widehat{G}} M_{d_\pi}\Big) \vnt
	\Big(\ell^\infty\text{-}\bigoplus_{\pi\in \widehat{G}} M_{d_\pi}\Big)$$
again by $\Gamma$. The formula for the transferred one is the following, which is a folklore,
but we include the proof for the convenience of the readers.
	\begin{prop}\label{prop-comulti-formula}
	For any $A = (A(\pi))_{\pi\in \widehat{G}}$ we have
		$$\Gamma(A) = (X(\pi, \pi'))_{\pi,\pi'\in \widehat{G}}\;\; \text{with}\;\;
		X(\pi,\pi') \cong \bigoplus_{\sigma \subset \pi\otimes \pi'}A(\sigma)$$
	up to unitary equivalences.
	\end{prop}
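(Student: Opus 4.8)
The plan is to trace through the two identifications --- the unitary equivalence $VN(G)\cong\ell^\infty\text{-}\bigoplus_{\pi\in\widehat{G}}M_{d_\pi}$ on one side, and the same identification applied to the tensor product $VN(G)\vnt VN(G)$ on the other --- and to check that the co-multiplication $\Gamma(\lambda(x))=\lambda(x)\otimes\lambda(x)$ is carried to the claimed block-diagonal form. Concretely, under the $*$-isomorphism $\lambda(x)\mapsto(\bar\pi(x))_{\pi\in\widehat G}$, the element $\lambda(x)\otimes\lambda(x)$ is sent to the tuple $\big(\bar\pi(x)\otimes\bar{\pi'}(x)\big)_{\pi,\pi'\in\widehat G}$ sitting inside $\big(\ell^\infty\text{-}\bigoplus_\pi M_{d_\pi}\big)\vnt\big(\ell^\infty\text{-}\bigoplus_{\pi'} M_{d_{\pi'}}\big)\cong\ell^\infty\text{-}\bigoplus_{\pi,\pi'}M_{d_\pi}\otimes M_{d_{\pi'}}=\ell^\infty\text{-}\bigoplus_{\pi,\pi'}M_{d_\pi d_{\pi'}}$. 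So I must show that for $A=(A(\pi))_\pi\in VN(G)$, the $(\pi,\pi')$-block of $\Gamma(A)$ is $\bar\pi(x)\otimes\bar{\pi'}(x)$-conjugate-equivariantly equal (up to a fixed unitary) to $\bigoplus_{\sigma\subset\pi\otimes\pi'}A(\sigma)$.

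The key step is the representation-theoretic fact that, for a fixed pair $\pi,\pi'\in\widehat G$, the tensor product representation $\pi\otimes\pi'$ decomposes into irreducibles: there is a unitary $U_{\pi,\pi'}\colon H_\pi\otimes H_{\pi'}\to\bigoplus_{\sigma\subset\pi\otimes\pi'}H_\sigma^{\oplus m_\sigma}$ (with $m_\sigma$ the multiplicity) intertwining $\pi\otimes\pi'$ with $\bigoplus_\sigma\sigma^{\oplus m_\sigma}$. Passing to conjugates (since the identification uses $\bar\pi$) one gets the analogous unitary for $\bar\pi\otimes\bar{\pi'}$. Now I would argue as follows: first verify the formula for $A=\lambda(x)$ a point mass, i.e. $A(\sigma)=\bar\sigma(x)$ for all $\sigma$; there the identity $U_{\pi,\pi'}(\bar\pi(x)\otimes\bar{\pi'}(x))U_{\pi,\pi'}^*=\bigoplus_{\sigma\subset\pi\otimes\pi'}\bar\sigma(x)^{\oplus m_\sigma}$ is exactly the decomposition of the tensor product representation. (In the statement's notation the multiplicities are folded into the direct sum $\bigoplus_{\sigma\subset\pi\otimes\pi'}$, counted with multiplicity.) Then I would invoke that $VN(G)$ is the weak$^*$-closed span of $\{\lambda(x):x\in G\}$ and that $\Gamma$, the normal spatial tensor product, and the $*$-isomorphisms involved are all weak$^*$-continuous, so the formula extends from point masses to an arbitrary $A=(A(\sigma))_\sigma\in VN(G)$: the map $A\mapsto(X(\pi,\pi'))_{\pi,\pi'}$ with $X(\pi,\pi')$ the $U_{\pi,\pi'}$-conjugate of $\bigoplus_{\sigma\subset\pi\otimes\pi'}A(\sigma)$ is a normal $*$-homomorphism agreeing with $\Gamma$ on the weak$^*$-dense set of point masses, hence equals $\Gamma$.

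I expect the main obstacle to be purely bookkeeping rather than conceptual: keeping the conjugation $\bar\pi$ straight throughout (the identification $\lambda(x)\mapsto(\bar\pi(x))_\pi$ introduces a conjugate that one must carry consistently), correctly interpreting ``$\bigoplus_{\sigma\subset\pi\otimes\pi'}A(\sigma)$'' as a direct sum with multiplicities, and making sure the unitary $U_{\pi,\pi'}$ implementing the block decomposition is the same one (not an $x$-dependent one) --- this is automatic because $U_{\pi,\pi'}$ intertwines the whole representation $\bar\pi\otimes\bar{\pi'}$, hence also commutes past the generated von Neumann algebra, which is what lets the extension-by-weak$^*$-density argument go through. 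A secondary point to be careful about is the precise meaning of ``up to unitary equivalences'': the asserted equality $X(\pi,\pi')\cong\bigoplus_{\sigma\subset\pi\otimes\pi'}A(\sigma)$ holds only after conjugating by $U_{\pi,\pi'}$, and one should note that this unitary is independent of $A$, so that the whole of $\Gamma$ is implemented by the single unitary $U=\bigoplus_{\pi,\pi'}U_{\pi,\pi'}$.
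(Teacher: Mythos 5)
Your proposal is correct and follows essentially the same route as the paper: verify the block formula for $\lambda(x)$ via the decomposition of $\bar\pi\otimes\bar{\pi'}$ into irreducibles (with the intertwining unitaries $U_{\pi,\pi'}$ independent of $x$), then extend to all of $VN(G)$ by weak$^*$-density of the linear span of $\{\lambda(x):x\in G\}$ together with weak$^*$-continuity of the maps involved. The paper's proof states exactly this in two sentences, leaving as ``straightforward'' the bookkeeping you spell out.
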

\begin{proof}
It is straightforward to check that the formula holds for $\lambda(x) \cong (\bar{\pi}(x))_{\pi\in \widehat{G}}$ for any $x\in G$.
Now we apply weak$^*$-density of the linear span of $\{\lambda(x):x\in G\}$ in $VN(G)$ to get the result in full generality.
\end{proof}

\subsection{Beurling--Fourier algebras}

We refer the reader to \cite{LS, LST} for the details of this section.

Let $G$ be a compact group. We call a function $\om : \widehat{G} \to [1,\infty)$ a {\it weight} if
	\begin{equation}\label{eq-weight1}
	\om(\sigma) \le \om(\pi) \om(\pi')
	\end{equation}
for any $\pi,\pi'\in \widehat{G}$ and $\sigma\in \widehat{G}$ satisfying $\sigma \subset \pi \otimes \pi'$ (see \cite[Theorem 2.12]{LS} or
\cite[Section 3]{LST}). For any weight $\om$, we set
	$$W = \bigoplus_{\pi\in \widehat{G}} \om(\pi){\rm id}_{M_{d_\pi}}.$$
Note that $W$ is an unbounded operator in general, but $\Gamma(W)$ still can be well-defined also as an unbounded operator (\cite[Section 2 and Theorem 2.12]{LS}).
We may view $\Gamma(W)$ as a collection of matrices with possibly unbounded matrix norms.
Moreover,
	$$W^{-1} = \bigoplus_{\pi\in \widehat{G}} \om(\pi)^{-1} {\rm id}_{M_{d_\pi}}$$
is a bounded operator. Then, Proposition \ref{prop-comulti-formula} tells us that
	\begin{equation}\label{eq-weight3}
	\Gamma(W)(W^{-1}\otimes W^{-1}) (\pi, \pi') \cong \bigoplus_{\sigma \subset \pi\otimes \pi'}\frac{\om(\sigma)}{\om(\pi)\om(\pi')}{\rm id}_{M_{d_\sigma}},
	\end{equation}
so that the condition \eqref{eq-weight1} can be restated as
	\begin{equation}\label{eq-weight2}
	\Gamma(W)(W^{-1}\otimes W^{-1}) \le 1_{VN(G)}.
	\end{equation}
We define the {\it Beurling--Fourier algebra} $A(G,\om)$ by
	$$A(G,\om):=\{f\in C(G) : \norm{f}_{A(G,\om)}=\sum_{\pi\in \widehat{G}}d_\pi \om(\pi)\norm{\widehat{f}(\pi)}_1 < \infty \},$$
where
	$$\widehat{f}(\pi) = \int_G f(x)\overline{\pi}(x)dx \in M_{d_\pi}.$$
$A(G,\om)$ can be naturally identified with the space
	$$\ell^1\text{-}\bigoplus_{\pi\in \widehat{G}} d_\pi \om(\pi)S^1_{d_\pi}.$$
Thus the dual space is
	$$\ell^\infty\text{-}\bigoplus_{\pi\in \widehat{G}} \om(\pi)^{-1}M_{d_\pi}$$
via the standard duality bracket, which we will denote by $VN(G,W^{-1})$.
The above notation is justified by the fact that $VN(G,W^{-1})$ can be identified with $\{AW : A\in VN(G)\}$ endowed with the norm
	$$\norm{AW}_{VN(G,W^{-1})} = \norm{A}_{VN(G)}.$$
Thus, we have a canonical isometry $VN(G) \rightarrow VN(G,W^{-1}),\; A \mapsto AW$,
and we equipped an operator space structure on $VN(G,W^{-1})$ using this isometry.
Consequently, $A(G,\om)$ has a natural operator space structure as the predual of $VN(G,W^{-1})$,
with which $A(G,\om)$ is a completely contractive Banach algebra under the pointwise multiplication.
Indeed, the cb-norm of the pointwise multiplication
	$$m : A(G,\om) \prt A(G,\om) \to A(G,\om)$$
is known to be the same as the cb-norm of the modified co-multiplication
	\begin{equation}\label{eq-mod-co-multi}
	\widetilde{\Gamma} : VN(G) \rightarrow VN(G)\vnt VN(G),\; A \mapsto \Gamma(A)\Gamma(W)(W^{-1}\otimes W^{-1}),
	\end{equation}
and the condition \eqref{eq-weight1} or \eqref{eq-weight2} implies that $\widetilde{\Gamma}$ is completely contractive.

We finish this section with the definition of a fundamental example of weights on $\widehat{G}$.
	\begin{defn}\label{def-dimension-weight}
		For $\alpha \geq 0$, we define $\om_\alpha : \widehat{G} \to [1,\infty)$ by
			$$\om_\alpha(\pi) = d^\alpha_\pi \;\; \ \ (\pi\in \widehat{G}).$$
Clearly $\om_\alpha$ satisfies the condition \eqref{eq-weight1}, and so, it defines a weight on $\widehat{G}$; it is called the {\it dimension weight of order $\alpha$}.
	\end{defn}
 We would like to point out that if $G$ is abelian,
then $\om_\alpha=1$. Hence the dimension weights are interesting only for compact groups that are far
from being abelian.

\subsection{Weights on the dual of compact connected Lie groups}

When the group $G$ is a connected compact Lie group,
we have another fundamental example of weights on $\widehat{G}$ using the highest weight theory.
See \cite{Wal73} or \cite[section 5]{LST} for the details.

Let $\mathfrak{g}$ be the Lie algebra of $G$ with the decomposition $\mathfrak{g} = \mathfrak{z} + \mathfrak{g}_1$,
where $\mathfrak{z}$ is the center of $\mathfrak{g}$ and $\mathfrak{g}_1 = [\mathfrak{g}, \mathfrak{g}]$.
Let $\mathfrak{t}$ be a maximal abelian subalgebra of $\mathfrak{g}_1$ and $T = \text{exp}\mathfrak{t}$.
Then there are fundamental weights $\lambda_1, \cdots, \lambda_r, \Lambda_1,\cdots,\Lambda_l \in \mathfrak{g}^*$ with $r = \text{dim}\mathfrak{z}$ and $l=\text{dim}\mathfrak{t}$ such that
any $\pi \in \widehat{G}$ is in one-to-one correspondence with its associated highest weight
\begin{equation}\label{eq-highest weight rep}
\Lambda_\pi = \sum^r_{i=1}a_i\lambda_i + \sum^l_{j=1}b_j \Lambda_j,
\end{equation}
which is parameterized by $r$ integers $(a_i)^r_{i=1}$ and $l$ non-negative integers $(b_j)^l_{j=1}\in \z^l_+$.
Note that we adapted the same notations for the weights $\lambda_i$ and $\Lambda_j$ from \cite[section 5]{LST}.

Let $\chi_i$ be the character of $G$ associated to the highest weight $\lambda_i$ and $\pi_j$ be the irreducible representation associated to the weight $\Lambda_j$. Then,
	$$S=\{\pm \chi_i, \pi_j : 1\le i\le r, 1\le j \le l\}$$
is known to generate $\widehat{G}$. More precisely, if we denote for every $k\ge 1$,
	$$S^{\otimes k} =\{\pi \in \widehat{G} : \pi \subset \sigma_1 \otimes \cdots \otimes \sigma_k \;\;
	\text{where}\;\; \sigma_1, \cdots, \sigma_k \in S\cup\{1\} \},$$
then we have
	$$\bigcup_{k\ge 1} S^{\otimes k} = \widehat{G}.$$
	Now we define $\tau_S : \widehat{G} \to \mathbb{N}\cup\{0\}$, {\it the length function on $\widehat{G}$ associated to $S$}, by
	$$\tau_S(\pi) := k,\;\;\text{if}\;\; \pi\in S^{\otimes k}\backslash S^{\otimes(k-1)}.$$

From the definition, we clearly have
	\begin{equation}\label{eq3}
	\tau_S(\sigma) \le \tau_S(\pi) + \tau_S(\pi')
	\end{equation}
for any $\pi, \pi'\in \widehat{G}$ and $\sigma \subset \pi\otimes \pi'$. This fact allows us
to use $\tau_S$ to construct various weights on $\widehat{G}$.
	\begin{defn}\label{def-poly-weight}
		For $\alpha \geq 0$ and $1\geq \beta \geq 0$, we define $\om^\alpha_S, \gamma^\beta_S : \widehat{G} \to [1,\infty)$ by
			$$\om^\alpha_S(\pi) = (1+\tau_S(\pi))^\alpha \ , \ \gamma^\beta_S(\pi)=e^{\tau_S(\pi)^\beta}\ \ (\pi\in \widehat{G}).$$
	Using \eqref{eq3}, it follows routinely that both $\om^\alpha_S$ and $\gamma^\beta_S$ satisfy \eqref{eq-weight1}, and hence, they define weights on $\widehat{G}$; they are called the {\it polynomial weight of order $\alpha$} and the {\it exponential weight of order $\beta$}, respectively. When
$G$ is abelian (e.g. $G=\mathbb{T}^n$), then our definitions coincide with the classical polynomial
and exponential weights on finitely generated abelian groups.
		\end{defn}
		
\begin{rem}\label{re-lenght-norm 1-equ}
	\begin{enumerate}
	\item
We would like to highlight the fact that the above length function $\tau_S$ is equivalent to the following 1-norm defined on $\widehat{G}$:
	$$\norm{\pi}_1 := \sum^r_{i=1}\abs{a_i} + \sum^l_{j=1}b_j,$$
where the integers $a_i$ and $b_j$ are defined in \eqref{eq-highest weight rep}.
Indeed, in the proof of \cite[Theorem 5.4]{LST}, it is proved that there is a constant $C$ depending only on $G$ such that
\begin{equation}\label{eq2}
	\tau_S(\pi) \le \norm{\pi}_1 \le C \tau_S(\pi).
	\end{equation}
	
	\item
	We may consider a variant of exponential weights of the form $e^{D\tau_S(\pi)^\beta}$ with an additional parameter $D>0$. We note that all the results in this paper concerning the weight $e^{\tau_S(\pi)^\beta}$ still hold in the case of the weight $e^{D\tau_S(\pi)^\beta}$ with a minor modification of calculations.
	\end{enumerate}
	
	\end{rem}

\subsection{Weights on the dual of $SU(n)$ and its restriction to a maximal torus}

The classical group $SU(n)$ is semisimple, so that we have $\mathfrak{z} = 0$.
We denote the maximal torus of $SU(n)$ consisting of diagonal matrices by $H_n \cong \mathbb{T}^{n-1}$.
Then $\widehat{SU(n)}$ is in one-to-one correspondence with $(n-1)$-tuples
	$$(b_1\cdots, b_{n-1})\in \z^{n-1}_+.$$
Note that the canonical generating set is given by
	$$S = \{(1, 0, \cdots, 0), \cdots, (0, \cdots, 0, 1) \}\subset \z^{n-1}_+.$$
By setting
	$$\lambda_n = 0, \lambda_{n-1} = b_{n-1}, \lambda_{n-2} = b_{n-2} + b_{n-1}, \cdots, \lambda_1 = b_1 + \cdots + b_{n-1},$$ we get a one-to-one correspondence between $\widehat{SU(n)}$ and $n$-tuples $\lambda=(\lambda_1\cdots, \lambda_n)\in \z^n_+$ satisfying
	$$\lambda_1 \ge \lambda_2 \ge \cdots \ge \lambda_{n-1} \ge \lambda_n=0.$$
We will denote the $n$-tuple by $\lambda = (\lambda_1\cdots, \lambda_n)$, which is usually called a {\it dominant weight} in Lie theory. See \cite{FH} for the details of representation theory of $SU(n)$.
Let $\pi_\lambda$ be the irreducible representation corresponding to $\lambda = (\lambda_1\cdots, \lambda_n)$.
Then its length is
\begin{equation}\label{eq-norm 1-su(n)}
	\tau_S(\pi_\lambda) = \norm{\pi_\lambda}_1 = \lambda_1,
	\end{equation}
and its character function $\chi_\lambda = \chi_{\pi_\lambda}$ has the following form when it is restricted to a maximal torus $H_n = \{\text{diag}(x_1,\cdots, x_n)\}$:
	$$\chi_\lambda(x_1,\ldots,x_n)=\sum_{T}x_1^{t_1}\ldots x_n^{t_n},$$
where $T$ runs through all the semistandard Young tableaux of shape $\lambda$ with parameters $t_1,\ldots, t_n$. Here the parameter $t_k$, $1\le k \le n$, is the number of times $k$ appear in the tableau. Moreover,
	$$\sum_{i=1}^n t_i=\sum_{i=1}^{n-1} \lambda_i$$
and we have the following dimension formula:
	$$d_\lambda = d_{\pi_\lambda} = \prod_{1\leq i<j\leq n }\frac{\lambda_i-\lambda_j + j-i}{j-i}.$$
Since $x_1\cdots x_n = 1$, we may also write as follows
\begin{equation}\label{eq-schur poly}
\chi_\lambda(x_1,\ldots,x_{n-1})=\sum_{T}x_1^{t_1-t_n}\ldots x_{n-1}^{t_{n-1}-t_n}.
\end{equation}
Now we turn our attention to the restriction of weights to subgroups.
Let $H$ be a closed subgroup of $G$, and let $\om : \widehat{G} \to [1,\infty)$ be a weight.
We define the restriction of $\om$ on $H$, $\om_H : \widehat{H} \to [1,\infty)$ by
\begin{equation}\label{eq-restriction-defn}
	\om_H(\pi)=\inf \{\om(\widetilde{\pi}) \mid \pi \subset \widetilde{\pi}|_H\}.
	\end{equation}
Then it is shown in \cite[Proposition 3.5]{LS} that $\om_H$ is a weight on $\widehat{H}$ and the restriction map
	\begin{equation}\label{eq-restriction}
	R_H: A(G,\om) \to A(H,\om_H), \;\; f \mapsto f|_{H}
	\end{equation}
is a complete quotient map.

We mainly focus on the case when $G=SU(n)$ and $H = H_n\cong \mathbb{T}^{n-1}$, a maximal torus of $SU(n)$.
We first need to understand the decomposition of $\pi|_{H_n}$ for any $\pi\in \widehat{G}$.
The following proposition is an immediate consequences of the definition of $\pi_\lambda$.

	\begin{prop}\label{prop-subrep}
	Let $\lambda$ and $\pi_\lambda$ be as above and $P = (p_1,\cdots,p_{n-1})\in \z^{n-1}$ be an $(n-1)$-tuple of integers.
	Then, the character $\chi_P$ of $\mathbb{T}^{n-1}$ associated to $P$ satisfies
		$$\chi_P \subset \pi_\lambda|_{H_n}$$
	if and only if there exists a semistandard Young tableau $T$ with parameters $t_1,\ldots, t_n$ such that
	$t_i-t_n=p_i$ for every $1\leq i\leq n-1$.
	\end{prop}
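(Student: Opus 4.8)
\textbf{Proof proposal for Proposition \ref{prop-subrep}.}
The plan is to identify the characters appearing in $\pi_\lambda|_{H_n}$ directly with the monomials occurring in the restriction of the character function $\chi_\lambda$ to $H_n$. Recall the general fact that for a representation $\pi$ of a compact group $G$ and a closed subgroup $H$, a character $\chi_P$ of $H$ occurs in $\pi|_H$ (i.e. $\chi_P \subset \pi|_{H_n}$ in the notation of the paper) if and only if $\chi_P$ appears with nonzero coefficient in the expansion of $\chi_\pi|_H$ as a sum of characters of $H$; this is immediate from orthogonality of characters on $H$, since the multiplicity of $\chi_P$ in $\pi|_H$ is exactly $\int_H \chi_\pi(h)\overline{\chi_P(h)}\,dh$. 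When $H = H_n = \mathbb{T}^{n-1}$ is abelian, the characters of $H_n$ are precisely the Laurent monomials $x_1^{p_1}\cdots x_{n-1}^{p_{n-1}}$ indexed by $P = (p_1,\dots,p_{n-1})\in\z^{n-1}$.

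Next I would invoke formula \eqref{eq-schur poly}, which writes $\chi_\lambda|_{H_n}$ as
	$$\chi_\lambda(x_1,\ldots,x_{n-1})=\sum_{T}x_1^{t_1-t_n}\ldots x_{n-1}^{t_{n-1}-t_n},$$
the sum running over all semistandard Young tableaux $T$ of shape $\lambda$ with content parameters $t_1,\dots,t_n$. Since every term in this sum is a genuine monomial with nonnegative integer coefficient, there can be no cancellation: collecting terms, the coefficient of a monomial $x_1^{p_1}\cdots x_{n-1}^{p_{n-1}}$ in $\chi_\lambda|_{H_n}$ equals the number of semistandard Young tableaux $T$ of shape $\lambda$ whose parameters satisfy $t_i - t_n = p_i$ for all $1\le i\le n-1$. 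Combining this with the preceding paragraph, $\chi_P \subset \pi_\lambda|_{H_n}$ if and only if this count is nonzero, i.e. if and only if there exists at least one semistandard Young tableau $T$ with parameters $t_1,\dots,t_n$ such that $t_i - t_n = p_i$ for every $1\le i\le n-1$, which is exactly the claimed statement.

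The only point requiring a little care — though it is not really an obstacle — is the absence of cancellation in \eqref{eq-schur poly}: one must be sure that when the tableau sum is regrouped into distinct monomials the coefficients stay positive, which is clear since each summand is a monomial with coefficient $1$. A second minor point is consistency of the bookkeeping: the constraint $\sum_{i=1}^n t_i = \sum_{i=1}^{n-1}\lambda_i$ recorded above fixes $t_n$ once $(t_1,\dots,t_{n-1})$, equivalently $(p_1,\dots,p_{n-1})$, is specified, so the condition $t_i - t_n = p_i$ is well posed; there is nothing further to verify. Hence the proposition follows immediately from the Schur-polynomial description of the character together with abelian Frobenius reciprocity on the torus.
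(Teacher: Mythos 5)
Your argument is correct and is essentially the same as the paper's, which treats the proposition as an immediate consequence of the restricted character formula \eqref{eq-schur poly}; you have simply made explicit the two routine ingredients (orthogonality of characters on the torus and the absence of cancellation among the monomials in the tableau sum). Nothing further is needed.
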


We show in the following theorem that if we restrict dimension weights on $\widehat{SU(n)}$ down to $\widehat{H_n}\cong \mathbb{Z}^{n-1}$, then we would again get polynomial weights on $\mathbb{Z}^{n-1}$. Recall the polynomial weight $\rho_{\alpha}$ of order $\alpha>0$ on $\mathbb{Z}^{n-1}$ is given by
	$$\rho_\alpha(P) = (1+\norm{P})^\alpha =  (1+\abs{p_1}+\cdots+\abs{p_{n-1}})^\alpha,\;\; P=(p_1,\ldots, p_{n-1})\in \mathbb{Z}^{n-1}.$$

	\begin{thm}\label{thm-restriction-dim}
      Let $\alpha \geq 0$, and let $\om_\alpha$ the dimension weight on $\widehat{SU(n)}$ defined in Definition \ref{def-dimension-weight}. Then the restriction of $\om_\alpha$ to $\widehat{H_n}$ is equivalent to
	the polynomial weight $\rho_{(n-1)\alpha}$ on $\mathbb{Z}^{n-1}$ up to constants depending only on $n$ and $\alpha$.
	Moreover, $A(\mathbb{T}^{n-1}, \rho_{(n-1)\alpha})$ is completely isomorphic with the complete quotient of
	$A({SU(n)},\om_\alpha)$ coming from the restriction to $H_n$.
	\end{thm}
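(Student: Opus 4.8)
The plan is to prove the first assertion — that $(\om_\alpha)_{H_n}$ is equivalent, up to constants depending only on $n$ and $\alpha$, to $\rho_{(n-1)\alpha}$ on $\widehat{H_n}\cong\z^{n-1}$ — and then deduce the second from it. Since $\om_\alpha(\pi_\lambda)=d_\lambda^{\,\alpha}$ and $t\mapsto t^\alpha$ is nondecreasing, \eqref{eq-restriction-defn} gives $(\om_\alpha)_{H_n}(P)=D(P)^\alpha$, where $D(P):=\min\{d_\lambda:\chi_P\subset\pi_\lambda|_{H_n}\}$ (a minimum over a nonempty set of positive integers, nonemptiness being visible from the upper bound below). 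So it suffices to produce constants $c_n,C_n>0$ with
$$c_n(1+\norm P)^{n-1}\ \le\ D(P)\ \le\ C_n(1+\norm P)^{n-1},$$
after which raising to the power $\alpha$ gives the claim. Throughout I use Proposition~\ref{prop-subrep}: $\chi_P\subset\pi_\lambda|_{H_n}$ iff some semistandard Young tableau of shape $\lambda$ has content $(t_1,\dots,t_n)$ with $t_i-t_n=p_i$ for $1\le i\le n-1$.

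For the upper bound, given $P=(p_1,\dots,p_{n-1})$ I put $b=\max\{0,-p_1,\dots,-p_{n-1}\}$ and $N=\sum_{i=1}^{n-1}p_i+nb$, so that $(p_1+b,\dots,p_{n-1}+b,b)$ is a tuple of nonnegative integers summing to $N$. The single-row tableau of shape $(N,0,\dots,0)$ obtained by listing $p_1+b$ ones, then $p_2+b$ twos, \dots, then $b$ copies of $n$ is semistandard with exactly this content, so by Proposition~\ref{prop-subrep} $\chi_P\subset\pi_{(N,0,\dots,0)}|_{H_n}$, whence $D(P)\le d_{(N,0,\dots,0)}=\binom{N+n-1}{n-1}$. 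Since $b\le\norm P$ and $\sum_{i=1}^{n-1}p_i\le\norm P$ one has $N\le(n+1)\norm P$, and therefore $D(P)\le C_n(1+\norm P)^{n-1}$.

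For the lower bound, suppose $\chi_P\subset\pi_\lambda|_{H_n}$ with $\lambda=(\lambda_1\ge\dots\ge\lambda_n=0)$, and let $T$ be a semistandard tableau of shape $\lambda$ with content $(t_1,\dots,t_n)$, $t_i-t_n=p_i$. As $\lambda$ has $\lambda_1$ columns and columns are strictly increasing, each value occurs at most $\lambda_1$ times in $T$, so $0\le t_k\le\lambda_1$ for all $k$; hence $\abs{p_i}=\abs{t_i-t_n}\le\lambda_1$ and $\norm P=\sum_{i=1}^{n-1}\abs{p_i}\le(n-1)\lambda_1$. It then remains to see that $\lambda_n=0$ alone forces $d_\lambda\ge c_n'\lambda_1^{\,n-1}$. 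For this I would substitute $\ell_i=\lambda_i+(n-i)$ in the Weyl dimension formula to get $d_\lambda=\bigl(\prod_{i<j}(j-i)\bigr)^{-1}\prod_{i<j}(\ell_i-\ell_j)$, where $\ell_1=\lambda_1+n-1$, $\ell_n=0$, and $\ell_2,\dots,\ell_{n-1}$ are distinct integers in $\{1,\dots,\ell_1-1\}$; for each such $i$ the product $\ell_i(\ell_1-\ell_i)$ of two positive integers summing to $\ell_1$ is at least $\ell_1-1\ge\lambda_1$, so discarding the remaining Vandermonde factors (each $\ge1$) and using $\ell_n=0$ yields $\prod_{i<j}(\ell_i-\ell_j)\ge\ell_1\prod_{i=2}^{n-1}\ell_i(\ell_1-\ell_i)\ge\lambda_1^{\,n-1}$, i.e.\ $d_\lambda\ge c_n'\lambda_1^{\,n-1}$. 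Combining this with $\lambda_1\ge\norm P/(n-1)$ and $d_\lambda\ge1$ gives $D(P)\ge c_n(1+\norm P)^{n-1}$.

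Finally, for the second assertion: by \eqref{eq-restriction} the restriction map $A(SU(n),\om_\alpha)\to A(H_n,(\om_\alpha)_{H_n})$ is a complete quotient map, so the complete quotient of $A(SU(n),\om_\alpha)$ coming from restriction to $H_n$ is, completely isometrically, $A(H_n,(\om_\alpha)_{H_n})$; and since $(\om_\alpha)_{H_n}$ and $\rho_{(n-1)\alpha}$ are equivalent weights on $\widehat{H_n}\cong\z^{n-1}$, the identity map is a complete isomorphism between the corresponding Beurling--Fourier algebras — under the identification of $VN(H_n,W^{-1})$ with $VN(H_n)$ its adjoint is multiplication by a bounded function, which is completely bounded with cb-norm equal to its sup-norm — so $A(H_n,(\om_\alpha)_{H_n})$ is completely isomorphic to $A(\mathbb{T}^{n-1},\rho_{(n-1)\alpha})$, as required. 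I expect the dimension bound $d_\lambda\gtrsim_n\lambda_1^{\,n-1}$ for $\lambda_n=0$ to be the only genuinely non-formal step: the point is to land on the exponent $n-1$ (rather than the $\binom n2$ one gets by crudely bounding every Weyl factor), and it is the Vandermonde rewriting together with the elementary inequality $xy\ge\ell_1-1$ for positive integers $x+y=\ell_1$ that delivers it; dually, on the upper side, the relevant fact is that single-row representations are precisely the ones of smallest dimension carrying a prescribed torus weight.
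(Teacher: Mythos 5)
Your proof is correct and follows essentially the same route as the paper: the upper bound via an explicit single-row highest weight whose restriction contains $\chi_P$, and the lower bound by bounding $\norm{P}$ by a multiple of $\lambda_1$ and then extracting from the Weyl dimension formula the factors indexed by the pairs $(1,j)$ and $(j,n)$ to get $d_\lambda\ge c_n'\lambda_1^{\,n-1}$ (your $\ell_i(\ell_1-\ell_i)\ge\ell_1-1$ is the same device as the paper's $(\lambda_1-\lambda_j+1)(\lambda_j+1)\ge\tfrac{\lambda_1+2}{2}$). Only the closing aside that single-row representations are \emph{precisely} the minimal-dimensional ones carrying a prescribed torus character is not literally true (e.g.\ $\chi_{(0,-1)}\subset\pi_{(1,1,0)}|_{H_3}$ with $d_{(1,1,0)}=3$, smaller than the single-row choice), but this remark plays no role in your argument.
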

	
\begin{proof}
Fix $n\geq 2$ and $P=(p_1,\ldots, p_{n-1})\in {\mathbb Z}^{n-1}$. By \eqref{eq-restriction-defn} and
Proposition \ref{prop-subrep}, we must estimate the infimum of $d_\lambda$ over all possible $\chi_P \subset \pi_\lambda|_{H_n} \in \widehat{SU(n)}$.
Without loss of generality, we can assume that $p_1\geq p_2\geq\ldots\geq p_{n-1}$ since
the Schur polynomial is symmetric (so the rearrangement $t'_1\geq t'_2\geq \ldots\geq t'_{n-1}, t_n$ of $t_1,\ldots, t_{n-1}, t_n$
also appears in a semistandard Young tableau of shape $\lambda$),
the latter is equivalent to $\chi_{P'}\subset \pi_\lambda|_{{\mathbb T}^{n-1}}$,
where $P'$ is the rearrangement of $P$ in the non-increasing order.

We now consider a particular $\lambda = \lambda_P\in \z^n_+$ such that $\chi_P\subset \pi_\lambda|_{{\mathbb T}^{n-1}}$ given by
	\begin{equation}\label{eq-choice-young}
	\lambda_{P} : \lambda_1=\sum_{i=1}^{n-1}p_i+n|p_{n-1}|,\; \lambda_2=\cdots = \lambda_n=0.
	\end{equation}
If we set the parameters $t_1,\cdots, t_n$ by	
	\begin{eqnarray}\label{eq-choice-t}
	t_n&=&|p_{n-1}|,\\
	t_i&=&p_i+|p_{n-1}|, \ 1\leq i\leq n-1,\nonumber
	\end{eqnarray}
then since $p_1\geq p_2\geq \ldots\geq p_{n-1}$, we have
	$$t_1\geq t_2\geq \ldots\geq t_{n-1}=p_{n-1}+|p_{n-1}|\ge 0.$$
Therefore for each $1\leq i\leq n$, we have $t_i\geq 0$.
Note that $\lambda_P$ is a diagram with only one row,
so that it is easy to find a semistandard Young tableau of shape $\lambda_P$ in which
the weight of each integer $j$ is exactly $t_j$ for every $1\leq j\leq n$ as follows:
	$$\underbrace{1\dots\dots1}_{t_1}\underbrace{2\dots\dots2}_{t_2}\ldots\underbrace{n\dots\dots n}_{t_n}.$$
Moreover,
	\begin{align*}
	d_{\lambda_P}& = \frac{\prod_{1\leq i<j\leq n}(\lambda_i-\lambda_j + j-i)}{\prod_{1\le i<j\leq n}(j-i)}\\
	&= \frac{1}{(n-1)!}\prod_{1<j\le n}\left(\sum_{i=1}^{n-1}p_i+n|p_{n-1}| + j-1\right)\\
	&\le \frac{1}{(n-1)!}\prod_{1<j\le n}\left[(n+1)\left(\sum_{i=1}^{n-1} \abs{p_i}+1\right)\right]\\
	&= \frac{(n+1)^{n-1}}{(n-1)!}\left(\sum_{i=1}^{n-1} \abs{p_i} + 1\right)^{n-1}.
	\end{align*}
On the other hand, let $\lambda$ be any dominant weight such that $\chi_{P}\subset \pi_\lambda|_{{\mathbb T}^{n-1}}$.
Then there exist parameters $t_1,\ldots,t_n$ such that $p_i=t_i-t_n$ for every $1\leq i\leq n-1$. Moreover, we have
	\begin{align}\label{eq-lower-estimate}
	\sum_{i=1}^{n-1}|p_i| + 1 & = \sum_{i=1}^{n-1}|t_i-t_n| + 1 \le \sum_{i=1}^{n-1}(t_i+t_n) + 1\\
	& \le 1+ (n-1)\sum_{i=1}^nt_i = 1+(n-1)\sum_{i=1}^{n-1} \lambda_i \nonumber \\
	& \le 1+n(n-1) \lambda_1 \leq n^2(\lambda_1+1).\nonumber
	\end{align}
We also have
\begin{align*}
\lefteqn{\prod_{1\leq i<j\leq n}(\lambda_i-\lambda_j + j-i)}\\
&\ge (\lambda_1+1)(\lambda_1-\lambda_2 + 1)(\lambda_2+1)\cdots(\lambda_1-\lambda_{n-1} +1)(\lambda_{n-1} + 1)\\
&\ge (\lambda_1+1)\left(\frac{\lambda_1 + 2}{2}\right)^{n-2} \ge \frac{1}{2^{n-2}}(\lambda_1+1)^{n-1},
\end{align*}
where we used the fact that $ab\geq\frac{a+b}{2}$ whenever $a$ and $b$ are both at least 1.
By combining the preceding inequality with \eqref{eq-lower-estimate}, we get
	\begin{align*}
	d_\lambda & = \frac{\prod_{1\leq i<j\leq n}(\lambda_i-\lambda_j + j-i)}{\prod_{1\le i<j\leq n}(j-i)}
	\ge \frac{(\lambda_1+1)^{n-1}}{2^{n-2}\prod_{1\le i<j\leq n}(j-i)}\\
	& \ge c_n\left(\sum_{i=1}^{n-1}|p_i|+1\right)^{n-1},
	\end{align*}
where $\displaystyle c_n = \frac{1}{(n^2)^{n-1}2^{n-2}\prod_{1\le i<j\leq n}(j-i)}$.
Consequently, we have
	$$c_n\left(\sum_{i=1}^{n-1}|p_i|+1\right)^{n-1}\ \le \om_1|_{\widehat{H}_n}(\chi_P)
	\le d_n\left(\sum_{i=1}^{n-1}|p_i|+1\right)^{n-1},$$
where $\displaystyle d_n = \frac{(n+1)^{n-1}}{(n-1)!}$. Since $n\geq 2$ and $P=(p_1,\ldots, p_{n-1})\in {\mathbb Z}^{n-1}$
are arbitrary,  we conclude that for every $\alpha \geq 0$,
\begin{align*}\label{eq-dim weight-res-estim}
c^\alpha_n\rho_{(n-1)\alpha} \le \om_\alpha|_{\widehat{H}_n}
	\le d^\alpha_n \rho_{(n-1)\alpha}.
	\end{align*}
	The final result follows from \eqref{eq-restriction} (see also \cite[Proposition 3.5]{LS}).

\end{proof}	
	
We can also make similar estimation for the restriction of weights of polynomial type
on $\widehat{SU(n)}$ down to $\widehat{H_n}\cong \mathbb{Z}^{n-1}$. We will again obtain polynomial weights on $\mathbb{Z}^{n-1}$. However, the order will be different and computation become more straightforward.
	
	\begin{thm}\label{thm-restriction-poly}
	Let $\alpha \geq 0$, and let $\om^\alpha_S$ the polynomial weight on $\widehat{SU(n)}$ defined in Definition \ref{def-poly-weight}. Then the restriction of $\om^\alpha_S$ to $\widehat{H_n}$ is equivalent to
	the polynomial weight $\rho_{\alpha}$ on $\mathbb{Z}^{n-1}$ up to constants depending only on $n$ and $\alpha$.
	Moreover, $A(\mathbb{T}^{n-1}, \rho_{\alpha})$ is completely isomorphic with the complete quotient of
	$A({SU(n)},\om^\alpha_S)$ coming from the restriction to $H_n$.
	\end{thm}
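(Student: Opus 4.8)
The plan is to mimic the proof of Theorem \ref{thm-restriction-dim}, but now the relevant quantity to estimate is the length $\tau_S(\pi_\lambda)$ rather than the dimension $d_\lambda$. By \eqref{eq-restriction-defn} and Proposition \ref{prop-subrep}, we must compute the infimum of $\om^\alpha_S(\pi_\lambda) = (1+\tau_S(\pi_\lambda))^\alpha$ over all dominant weights $\lambda = (\lambda_1,\ldots,\lambda_n)$ with $\lambda_n = 0$ such that $\chi_P \subset \pi_\lambda|_{H_n}$, for a fixed $P = (p_1,\ldots,p_{n-1}) \in \z^{n-1}$. Using \eqref{eq-norm 1-su(n)}, we have $\tau_S(\pi_\lambda) = \lambda_1$, so the problem reduces to a purely combinatorial question: minimize $\lambda_1$ subject to the existence of a semistandard Young tableau $T$ of shape $\lambda$ with parameters $t_1,\ldots,t_n$ satisfying $t_i - t_n = p_i$ for $1 \le i \le n-1$.

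As in the previous theorem, since the Schur polynomial is symmetric we may assume $p_1 \ge p_2 \ge \cdots \ge p_{n-1}$. For the upper bound I would again exhibit the single-row diagram $\lambda_P$ with $\lambda_1 = \sum_{i=1}^{n-1} p_i + n|p_{n-1}|$ and $\lambda_2 = \cdots = \lambda_n = 0$, together with the parameters $t_n = |p_{n-1}|$, $t_i = p_i + |p_{n-1}|$ from \eqref{eq-choice-t}; the same one-row tableau works. Then $\tau_S(\pi_{\lambda_P}) = \lambda_1 = \sum_{i=1}^{n-1} p_i + n|p_{n-1}| \le (n+1)\sum_{i=1}^{n-1}|p_i|$, so $1 + \tau_S(\pi_{\lambda_P}) \le (n+1)(1 + \sum_{i=1}^{n-1}|p_i|) = (n+1)(1 + \norm{P})$, giving $\om^\alpha_S|_{\widehat{H_n}}(\chi_P) \le (n+1)^\alpha \rho_\alpha(P)$. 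For the lower bound, let $\lambda$ be any dominant weight with $\chi_P \subset \pi_\lambda|_{H_n}$, with parameters $t_1,\ldots,t_n$ and $p_i = t_i - t_n$. The chain of inequalities in \eqref{eq-lower-estimate} applies verbatim: $\sum_{i=1}^{n-1}|p_i| + 1 \le 1 + n(n-1)\lambda_1 \le n^2(\lambda_1 + 1) = n^2(1 + \tau_S(\pi_\lambda))$. Hence $\om^\alpha_S|_{\widehat{H_n}}(\chi_P) = (1+\tau_S(\pi_\lambda))^\alpha \ge n^{-2\alpha}\rho_\alpha(P)$, and taking the infimum over such $\lambda$ preserves this bound. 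Combining the two estimates yields $n^{-2\alpha}\rho_\alpha \le \om^\alpha_S|_{\widehat{H_n}} \le (n+1)^\alpha \rho_\alpha$ with constants depending only on $n$ and $\alpha$.

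The final assertion — that $A(\mathbb{T}^{n-1}, \rho_\alpha)$ is completely isomorphic to the complete quotient of $A(SU(n), \om^\alpha_S)$ by the restriction map — then follows exactly as in Theorem \ref{thm-restriction-dim}: by \eqref{eq-restriction} (equivalently \cite[Proposition 3.5]{LS}), the restriction map $R_{H_n} : A(SU(n), \om^\alpha_S) \to A(H_n, (\om^\alpha_S)_{H_n})$ is a complete quotient map, and the equivalence of weights just established shows $A(H_n, (\om^\alpha_S)_{H_n})$ and $A(\mathbb{T}^{n-1}, \rho_\alpha)$ are completely isomorphic via the identity map (since comparing weights up to multiplicative constants gives a complete isomorphism of the corresponding weighted $\ell^1$-direct sums, hence of their preduals).

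As the author's own remark in the statement anticipates, this is more straightforward than the dimension-weight case: there is no dimension formula to bound from both sides, only the single identity $\tau_S(\pi_\lambda) = \lambda_1$. The only mild point requiring care is the choice of the witnessing diagram $\lambda_P$ for the upper bound, but this is already handled by \eqref{eq-choice-young}–\eqref{eq-choice-t} and transfers without change. So I do not expect a genuine obstacle here; the main ``work'' is simply observing that the relevant estimates are already contained in the proof of Theorem \ref{thm-restriction-dim}.
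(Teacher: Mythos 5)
Your proposal is correct and follows essentially the same route as the paper's own proof: the same symmetry reduction, the same single-row witnessing diagram from \eqref{eq-choice-young}--\eqref{eq-choice-t} for the upper bound, the lower bound via \eqref{eq-lower-estimate}, and the identical constants $n^{-2\alpha}$ and $(n+1)^\alpha$, with the final assertion deduced from the complete quotient property of \eqref{eq-restriction}. No gaps to report.
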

	
\begin{proof}
By Remark \ref{re-lenght-norm 1-equ} and \eqref{eq-norm 1-su(n)}, we can assume that for every
$\pi_\lambda \in \widehat{SU(n)}$,
$$ \om^\alpha_S(\pi_\lambda)=(1+\lambda_1)^\alpha.$$
Fix again $n\geq 2$ and $P=(p_1,\ldots, p_{n-1})\in {\mathbb Z}^{n-1}$. As in the proof of Theorem \ref{thm-restriction-dim}, we can assume that
	$$p_1\geq p_2\geq\ldots\geq p_{n-1},$$
and consider a particular $\lambda = \lambda_P\in \z^n_+$ such that $\chi_P\subset \pi_\lambda|_{{\mathbb T}^{n-1}}$ by assigning the same parameters $\lambda_i$ and $t_i$ as in \eqref{eq-choice-young} and \eqref{eq-choice-t}. Then
	$$1+\lambda_1 = 1+\sum_{i=1}^{n-1}p_i+n|p_{n-1}| \le (n+1) \left(\sum_{i=1}^{n-1}|p_i|+1\right).$$
On the other hand, let $\lambda$ be any dominant weight such that $\chi_{P}\subset \pi_\lambda|_{{\mathbb T}^{n-1}}$.
Then we can use \eqref{eq-lower-estimate} to get
	$$1+\lambda_1 \ge \frac{1}{n^2} \left(\sum_{i=1}^{n-1}|p_i|+1\right).$$
	Putting together the preceding two inequalities, \eqref{eq-restriction-defn} and
Proposition \ref{prop-subrep}, we have
	$$ \frac{1}{n^2} \left(\sum_{i=1}^{n-1}|p_i|+1\right) \le \om^1_S|_{\widehat{H}_n}(\chi_P) \le (n+1) \left(\sum_{i=1}^{n-1}|p_i|+1\right).$$
	Since $n\geq 2$ and $P=(p_1,\ldots, p_{n-1})\in {\mathbb Z}^{n-1}$
are arbitrary,  we conclude that for every $\alpha \geq 0$,
	$$\frac{1}{n^{2\alpha}} \rho_{\alpha} \le \om^\alpha_S|_{\widehat{H}_n} \le (n+1)^\alpha \rho_{\alpha}.$$
The final result again follows from \eqref{eq-restriction} (see also \cite[Proposition 3.5]{LS}).	
\end{proof}

In the above we get equivalence of weights since we are working on polynomial types of weights. When we deal with exponential type of weights the above restriction does not guarrantee the equivalence of weights. However, restricting further down to 1-dimensional torus allows us to get an exact formula, which will help us later in section \ref{S:Exp. weight-poly growth}.

	\begin{thm}\label{thm-restriction-exp}
	 Let $\gamma^1_S$ the exponential weight on $\widehat{SU(n)}$ defined in Definition \ref{def-poly-weight}. Let $T$ be the 1-dimensional torus in $H_n$ whose entries are all 1 except for the first two. Then the restriction of $\gamma^1_S$ to $\widehat{T}$ is exactly the same as the exponential weight $e^{|\cdot|}$ on $\mathbb{Z}$. Moreover, $\ell^1(\mathbb{Z}, e^{|\cdot|})$ is a complete quotient of $A({SU(n)},\gamma^1_S)$.
	\end{thm}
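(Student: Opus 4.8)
The plan is to compute the restriction weight $(\gamma^1_S)_T$ explicitly using formula \eqref{eq-restriction-defn} together with the characterization of subrepresentations in Proposition \ref{prop-subrep}, now specialized to the one-dimensional subtorus $T$. Write $T = \{\mathrm{diag}(x,x,1,\ldots,1) : x\in\mathbb{T}\}$ (with $x$ in slots $1,2$, since the entries are ``all $1$ except for the first two'' and must multiply to $1$, so the second entry is forced to be $x$ as well — wait, I should be careful: for $\mathrm{diag}(x_1,\ldots,x_n)$ in $SU(n)$ we need $x_1\cdots x_n=1$, so if slots $3,\ldots,n$ are $1$ then $x_1 x_2=1$, i.e. $x_2=x_1^{-1}$; I will take $T=\{\mathrm{diag}(x,x^{-1},1,\ldots,1)\}$ and identify $\widehat T\cong\mathbb{Z}$ via the character $x\mapsto x^m$). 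First I would record that a character $x\mapsto x^m$ of $T$ appears in $\pi_\lambda|_T$ iff, by \eqref{eq-schur poly} restricted to this subtorus, some semistandard Young tableau $T'$ of shape $\lambda$ with parameters $t_1,\ldots,t_n$ satisfies $t_1-t_2=m$ (the exponent of $x$ in $x_1^{t_1-t_n}\cdots x_{n-1}^{t_{n-1}-t_n}$ evaluated at $x_1=x$, $x_2=x^{-1}$, rest $=1$). So $(\gamma^1_S)_T(m)=\inf\{e^{\lambda_1} : \exists\,\text{SSYT of shape }\lambda\text{ with }t_1-t_2=m\}= e^{\min\{\lambda_1\,:\,\ldots\}}$.

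The heart of the matter is then the combinatorial claim that $\min\{\lambda_1 : \exists\text{ SSYT of shape }\lambda\text{ with }t_1-t_2=m\}=|m|$. For the upper bound I exhibit an explicit shape: if $m\ge 0$ take the single-row diagram $\lambda=(m,0,\ldots,0)$ filled with $m$ copies of the entry $1$, giving $t_1=m$, $t_2=\cdots=t_n=0$, hence $t_1-t_2=m$ and $\lambda_1=m$; if $m<0$ take $\lambda=(|m|,0,\ldots,0)$ filled with $|m|$ copies of $2$, giving $t_2=|m|$, all other $t_i=0$, hence $t_1-t_2=-|m|=m$ and $\lambda_1=|m|$. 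This shows $(\gamma^1_S)_T(m)\le e^{|m|}$. For the lower bound, in any SSYT of shape $\lambda$ the entry $1$ can only occupy (part of) the first row, so $t_1\le\lambda_1$; similarly $t_2\le\lambda_1$ (entries equal to $2$ fit in the first two rows but in each column at most one of them — actually more simply $t_2\le\lambda_1$ since $t_2$ counts boxes and at most $\lambda_1$ of them... I'd argue $t_1+t_2\le 2\lambda_1$ from $t_i\le\lambda_1$, but I only need $|t_1-t_2|\le\lambda_1$). In fact $t_1\le\lambda_1$ always, and $t_2\le\lambda_1$ because the $2$'s occupy at most the first two rows with at most one per column, so at most $\lambda_1$ of them; more directly, $t_1\le\lambda_1$ and $t_2\ge 0$ give $t_1-t_2\le\lambda_1$, while $t_1\ge 0$ and $t_2\le\lambda_1$ give $t_1-t_2\ge -\lambda_1$. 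Hence $|m|=|t_1-t_2|\le\lambda_1$, so $e^{\lambda_1}\ge e^{|m|}$, and taking the infimum yields $(\gamma^1_S)_T(m)= e^{|m|}$, which is precisely the exponential weight $e^{|\cdot|}$ on $\mathbb{Z}$.

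Finally, the ``moreover'' clause follows immediately from the general theory already set up: by \eqref{eq-restriction} (equivalently \cite[Proposition 3.5]{LS}), the restriction map $R_T : A(SU(n),\gamma^1_S)\to A(T,(\gamma^1_S)_T)$ is a complete quotient map, and by the computation just finished $A(T,(\gamma^1_S)_T)=A(\mathbb{T},e^{|\cdot|})=\ell^1(\mathbb{Z},e^{|\cdot|})$, so the latter is a complete quotient of $A(SU(n),\gamma^1_S)$.

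The main obstacle is pinning down the two bounds $t_i\le\lambda_1$ cleanly and being careful about the sign convention for $T$ and the precise value $t_1-t_2$ (versus $t_1-t_n$ and the constraint $x_1x_2=1$); once the combinatorial minimization $\min\lambda_1=|m|$ is established, everything else is bookkeeping. I would also double-check that the chosen single-row tableaux are genuinely semistandard (trivially true: one row, weakly increasing, all entries equal) and that they lie in $\z^n_+$ with $\lambda_1\ge\cdots\ge\lambda_n=0$ as required by the $SU(n)$ parametrization.
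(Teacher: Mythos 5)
Your proposal is correct and follows essentially the same route as the paper: exhibit a single-row tableau to get the upper bound $e^{|m|}$, use $t_i\le\lambda_1$ for every $i$ to get the matching lower bound $|m|\le\lambda_1$, and invoke the complete quotient property \eqref{eq-restriction} of the restriction map for the final claim. If anything, you are slightly more careful than the paper's own argument, which works with the exponent $t_1-t_n$ and only with those $\lambda$ whose restriction to $H_n$ contains the specific character $(p_1,0,\dots,0)$, whereas your direct criterion $t_1-t_2=m$ characterizes containment of $\chi_m$ in $\pi_\lambda|_T$ itself and thus handles all relevant $\lambda$ (and both signs of $m$) cleanly.
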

	
\begin{proof}
The same approach as in Theorem \ref{thm-restriction-poly} gives us the conclusion. Indeed, we begin with $P=(p_1, 0, \dots, 0)\in {\mathbb Z}^{n-1}$. Now we set
	$$t_1 = \lambda_1 = \abs{p_1},\; \lambda_2 = t_2 = \dots = \lambda_n = t_n = 0,$$
then it is easy to observe that this choice of parameters can be easily realized in a semistandard Young tableau of shape $\lambda_P$. Thus, we have
	$$\gamma^1_S|_{T}(P) \le e^{\abs{p_1}}.$$
For the converse direction we let $\lambda$ be any dominant weight such that $\chi_{P}\subset \pi_\lambda|_{{\mathbb T}^{n-1}}$.
Then there exist parameters $t_1, \dots, t_n$ such that
	$$p_1 = t_1 - t_n\;\;\text{and}\;\; 0 = t_2 - t_n = \dots = t_{n-1} - t_n.$$
Since the parameters should be realized  a semistandard Young tableau of shape $\lambda$ we clearly have that $t_1, t_n \le \lambda_1$, which implies that $\abs{p_1} \le \lambda_1$. Thus, we have
	$$\gamma^1_S|_{T}(P) \ge e^{\abs{p_1}}.$$
\end{proof}

\section{Beurling--Fourier algebras on compact groups which are operator algebras}\label{sec-BF-OP}

In this section, we investigate when a Beurling--Fourier algebra on a compact connected Lie group
can be completely boundedly isomorphic to an operator algebra. Throughout this section, we use the
term ``positive result" when such a thing happens and ``negative result" when it does not.

Our approach for seeking Beurling--Fourier algebras as operator algebras is based on the following theorem of Blecher (\cite{B95}).

	\begin{thm}
	Let $\A$ be a completely contractive Banach algebra with the algebra multiplication
		$$m : \A \prt \A \to \A,$$
where $\prt$ is the projective tensor product of operator spaces. Then, $\A$ is completely isomorphic to an operator algebra if and only if the multiplication map extends to a completely bounded map
		$$m : \A \otimes_h \A \to \A.$$
	\end{thm}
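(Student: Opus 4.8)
The plan is to reduce the statement to the Blecher--Ruan--Sinclair abstract characterization of operator algebras: an operator space $\B$ carrying an associative multiplication is completely isometrically isomorphic to a norm-closed subalgebra of some $B(\Hi)$ if and only if that multiplication extends to a \emph{complete contraction} $\B\otimes_h\B\to\B$. Granting this, both implications become routine manipulations of operator space structures, and the only real work is a rescaling argument.

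For the ``only if'' direction I would start from a complete isomorphism $\theta\colon\A\to\B$ that is an algebra homomorphism onto a closed subalgebra $\B\subseteq B(\Hi)$. The multiplication of $B(\Hi)$ induces a complete contraction $B(\Hi)\otimes_h B(\Hi)\to B(\Hi)$ --- this is built into the definition of the Haagerup norm, which is tailored so that the matrix-multiplication pairing is contractive. Since $\otimes_h$ is injective, $\B\otimes_h\B$ embeds completely isometrically in $B(\Hi)\otimes_h B(\Hi)$, and multiplication carries it back into $\B$; hence $m_\B\colon\B\otimes_h\B\to\B$ is completely contractive. Writing $m=\theta^{-1}\circ m_\B\circ(\theta\otimes\theta)$ and using the functoriality of $\otimes_h$ for completely bounded maps (so the induced map $\A\otimes_h\A\to\B\otimes_h\B$ has cb-norm at most $\norm{\theta}_{cb}^2$), one gets that $m\colon\A\otimes_h\A\to\A$ is completely bounded.

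For the ``if'' direction, assume $m\colon\A\otimes_h\A\to\A$ is completely bounded and set $C=\max\{1,\norm{m}_{cb}\}$. Let $\A'$ be the underlying vector space and algebra of $\A$, re-equipped with the operator space structure whose matrix norms are those of $\A$ scaled by $C$; Ruan's axioms are preserved under such a global rescaling, so $\A'$ is a genuine operator space, and the identity map $\A\to\A'$ is simultaneously an algebra isomorphism and a complete isomorphism. Each matrix norm on $\A'\otimes_h\A'$ equals $C^2$ times the corresponding one on $\A\otimes_h\A$, since each of the two tensor legs contributes a factor $C$ to the infimum defining the Haagerup norm, whereas the matrix norms on $\A'$ are only $C$ times those on $\A$. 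Hence for $u\in M_n(\A'\otimes_h\A')$ one checks $\norm{m_n(u)}_{M_n(\A')}=C\norm{m_n(u)}_{M_n(\A)}\le C^2\norm{u}_{M_n(\A\otimes_h\A)}=\norm{u}_{M_n(\A'\otimes_h\A')}$, so $m\colon\A'\otimes_h\A'\to\A'$ is a complete contraction. Blecher--Ruan--Sinclair then gives a completely isometric algebra isomorphism of $\A'$ onto a closed subalgebra of some $B(\Hi)$, and composing with the complete isomorphism $\A\to\A'$ exhibits $\A$ as completely isomorphic to an operator algebra.

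The hard part is the Blecher--Ruan--Sinclair theorem itself: its proof must construct, out of the matrix norms alone, a Hilbert space on which $\A'$ acts completely isometrically by multiplication --- a non-commutative Hahn--Banach/GNS-type argument --- and carrying that out from scratch would be a development in its own right, so I would invoke it as a black box. Everything else (injectivity and functoriality of $\otimes_h$, Haagerup-contractivity of multiplication on $B(\Hi)$, and the quadratic-versus-linear scaling of the relevant norms) is standard. I should also note that the hypothesis ``$\A$ is a completely contractive Banach algebra'', i.e.\ that $m$ is a complete contraction on $\A\prt\A$, mainly fixes the ambient operator space and makes sense of $m$ on the algebraic tensor product; since $\norm{\cdot}_{\otimes_h}\le\norm{\cdot}_{\prt}$, it is the $\otimes_h$-hypothesis that does the actual work.
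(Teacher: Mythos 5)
Your proposal cannot really be compared with ``the paper's own proof'' because the paper offers none: the statement is quoted verbatim as Blecher's theorem with the citation \cite{B95}, and the rest of the paper uses it purely as a black box (via the reformulation \eqref{eq-Op-alg}). What you supply is the standard reduction of the completely bounded characterization to the completely isometric one, and both halves of your argument are correct: the ``only if'' direction via functoriality and injectivity of $\otimes_h$ together with the complete contractivity of multiplication $B(\Hi)\otimes_h B(\Hi)\to B(\Hi)$, and the ``if'' direction by rescaling all matrix norms by $C=\max\{1,\norm{m}_{cb}\}$, which indeed preserves Ruan's axioms and multiplies Haagerup matrix norms by $C^2$ while the target norms grow only by $C$, so that $m$ becomes completely contractive on $\A'\otimes_h\A'$. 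This is essentially how the result is obtained in the literature, so your route is the proof the paper omits rather than a divergence from it. One caveat deserves emphasis: the black box you invoke must be the completely isometric characterization for \emph{possibly non-unital} algebras (as treated in \cite{B95} and in \cite{BLe04}), not the original Blecher--Ruan--Sinclair theorem, which assumes an identity of norm one; your rescaling destroys exactly that hypothesis (any unit of $\A'$ has norm $C$), so the unital theorem cannot be applied to $\A'$ directly, and the passage from the unital to the non-unital statement is itself a nontrivial step. Since you state the black box in its non-unital form, which is a genuine theorem, your argument stands; just be aware that essentially all of the depth of the statement is concentrated in that cited result, exactly as it is in the paper.
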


In the case of $\A = A(G,\om)$ with the operator space structure described in section \ref{sec-BF-alg-gen},
$A(G,\om)$ is completely isomorphic to an operator algebra if and only if the following map is completely bounded.
	\begin{equation}\label{eq-Op-alg}
	\widetilde{\Gamma} : VN(G) \rightarrow VN(G)\otimes_{eh} VN(G),\;
	A \mapsto \Gamma(A)\Gamma(W)(W^{-1}\otimes W^{-1}).
	\end{equation}
Since we already know $\Gamma : VN(G) \rightarrow VN(G)\vnt VN(G)$ is a complete contraction, we can get the positive direction
(i.e. $A(G,\om)$ being completely isomorphic to an operator algebra)
if $\Gamma(W)(W^{-1}\otimes W^{-1})$ can be split as a sum of right or left cb-multiplier
from $VN(G)\vnt VN(G)$ into $VN(G)\otimes_{eh} VN(G)$,
where we could apply non-commutative Littlewood multiplier theory we developed earlier.

The following lemma will be used frequently throughout this section.

\begin{lem}\label{lem-summability}
	We have $\displaystyle \sum_{i\in \mathbb{Z}^n} \frac{1}{(1+\norm{i})^{2\alpha}}<\infty$ if and only if
	$\displaystyle \sum_{i\in \mathbb{Z}^n_+} \frac{1}{(1+\norm{i})^{2\alpha}}<\infty$ if and only if $\alpha > \frac{n}{2}$.
	\end{lem}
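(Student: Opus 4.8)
The plan is to reduce everything to a single elementary fact about convergence of $p$-series in several variables and then observe that the three conditions in the statement are linked by trivial comparisons. First I would note that the second sum is dominated term-by-term by the first (since $\mathbb{Z}^n_+ \subset \mathbb{Z}^n$ and all terms are positive), so the first sum being finite immediately forces the second to be finite; this handles one implication with no work. For the reverse direction I would observe that $\mathbb{Z}^n$ is covered by the $2^n$ ``orthants'' obtained by choosing a sign for each coordinate, and on each such orthant the map $i \mapsto (|i_1|,\dots,|i_n|)$ is a bijection onto $\mathbb{Z}^n_+$ (or a finite-to-one surjection, depending on how one treats coordinates equal to zero, which only inflates the count by a bounded factor) that preserves $\norm{i}$. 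Hence
$$\sum_{i\in \mathbb{Z}^n}\frac{1}{(1+\norm{i})^{2\alpha}} \le 2^n \sum_{i\in \mathbb{Z}^n_+}\frac{1}{(1+\norm{i})^{2\alpha}},$$
so finiteness of the second sum yields finiteness of the first. Thus the first two conditions are equivalent.

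It then remains to show that $\sum_{i\in \mathbb{Z}^n_+}(1+\norm{i})^{-2\alpha}<\infty$ if and only if $\alpha>\frac{n}{2}$. Here I would group the lattice points of $\mathbb{Z}^n_+$ according to the value of $k=\norm{i}=i_1+\cdots+i_n$: the number of $i\in\mathbb{Z}^n_+$ with $i_1+\cdots+i_n = k$ equals $\binom{k+n-1}{n-1}$, which is a polynomial in $k$ of degree exactly $n-1$, hence comparable to $k^{n-1}$ for large $k$ (bounded above and below by constant multiples). Therefore
$$\sum_{i\in \mathbb{Z}^n_+}\frac{1}{(1+\norm{i})^{2\alpha}} = \sum_{k=0}^{\infty}\frac{\binom{k+n-1}{n-1}}{(1+k)^{2\alpha}},$$
and this series is, up to constants, $\sum_{k\ge 1} k^{n-1-2\alpha}$, which converges precisely when $n-1-2\alpha < -1$, i.e. $2\alpha > n$, i.e. $\alpha > \frac{n}{2}$. (Alternatively one can compare the sum to the integral $\int_{\Real^n_+}(1+\abs{x}_1)^{-2\alpha}\,dx$ via the same radial slicing and the fact that the $\ell^1$-``sphere'' of radius $r$ in $\Real^n$ has $(n-1)$-dimensional measure proportional to $r^{n-1}$.)

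There is no serious obstacle here; the only point requiring the slightest care is the bookkeeping in the orthant decomposition, namely making sure that lattice points with some coordinate equal to zero are counted correctly so that the comparison constant stays finite — but since any such overcounting is by a factor at most $2^n$, it does not affect the conclusion. I would present the argument in the three short steps above: (i) the trivial inequality giving ``first $\Rightarrow$ second''; (ii) the orthant bound giving ``second $\Rightarrow$ first''; (iii) the layer-counting computation $\binom{k+n-1}{n-1}\asymp k^{n-1}$ reducing the $\mathbb{Z}^n_+$-sum to a one-dimensional $p$-series and pinning down the threshold $\alpha>\frac{n}{2}$.
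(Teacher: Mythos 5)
Your proposal is correct, and it is essentially the standard comparison argument the paper invokes: the paper's proof consists of a one-line appeal to the integral test for this Epstein-type series, while you carry out the same estimate explicitly by the orthant reduction from $\mathbb{Z}^n$ to $\mathbb{Z}^n_+$ and the lattice-point count $\binom{k+n-1}{n-1}\asymp k^{n-1}$ on the $\ell^1$-spheres, which correctly pins down the threshold $\alpha>\frac{n}{2}$ in both the convergence and divergence directions. No gaps; your version simply supplies the details the paper leaves to the reader.
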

\begin{proof}
The above series is sometimes called an Epstein series (\cite[p.277]{Far} for example). The results follows from a standard integral test argument.
\end{proof}

\subsection{The case of $\mathbb{T}^n$ with polynomial weights}
In this subsection, we consider the case of $G = \mathbb{T}^n$ with polynomial weights.
Since
	$$\widehat{\mathbb{T}^n} = \mathbb{Z}^n \;\;\text{and}\;\; A(\mathbb{T}^n, \om) \cong \ell^1(\mathbb{Z}^n, \om)$$
we can reformulate our problem as follows.

{\it The weighted convolution algebra $\ell^1(\mathbb{Z}^n, \om)$ with the maximal operator space structure is
completely isomorphic to an operator algebra if and only if the following map is completely bounded.
	\begin{equation}\label{eq-Op-alg-abelian}
	\widetilde{\Gamma} : \ell^\infty(\mathbb{Z}^n) \to \ell^\infty(\mathbb{Z}^n) \otimes_{eh} \ell^\infty(\mathbb{Z}^n),\;
	(a_k)_{k \in \mathbb{Z}^n} \mapsto (T(i,j) a_{i+j})_{i,j \in \mathbb{Z}^n},
	\end{equation}
where $T = (T(i,j))_{i,j \in \mathbb{Z}^n}$ is the matrix given by
\begin{equation}\label{eq-co prod-weight}
T(i,j) = \frac{\om(i+j)}{\om(i) \om(j)}
\end{equation}
associated with the weight $\om : \mathbb{Z}^n \to [1,\infty)$.
}

We will present a complete solution focusing on the case of polynomial weight $\rho_\alpha$.
Note that the 1-dimensional case has already been established in \cite{Var72} in the setting of Banach spaces.
The authors thank \'{E}ric Ricard for providing the main idea of the proof.

For the proof we need some background material of harmonic analysis. Let
	$$Q : L^\infty(\mathbb{T}) \to B(\ell^2), \;\; f \mapsto (\widehat{f}(-(i+j)))_{i,j\in \z}.$$
According to Nehari's theorem $Q$ is a contractive surjection onto the space of Hankelian matrices (see \cite[Section 6]{Pis} for example).

One more ingredient is the Rudin-Shapiro polynomials.
Recall that the Rudin-Shapiro polynomials are defined in the following recursive way.
	$$P_0(z) := 1,\;\; Q_0(z) :=1,$$
and for $k\ge 0$,
	$$P_{k+1}(z) := P_k(z) + z^{2^k}Q_k(z),\;\; Q_{k+1}(z) := Q_k(z) - z^{2^k}P_k(z).$$
By doing an induction on $k$, it is straightforward to check that the coefficients of $P_k$ are $\pm 1$, $\deg P_k =\deg Q_k=2^k-1$ and
$$ |P_k(z)|^2+|Q_k(z)|^2=2^{k+1} \ \ \ (z\in \mathbb{T}).$$ Hence
	$$\norm{P_k}_{L^\infty(\mathbb{T})} \le \sqrt{2^{k+1}}.$$
Combining the above two ingredients we get a sequence of Hankelian matrices
	$$A_{2^k} = Q(\overline{P}_k),\; k\ge 0,$$ where $A_{2^k}$ is a $2^k\times 2^k$ matrix with entries $\pm 1$ satisfying
	\begin{equation}\label{eq-Shapiro-matrix}
		\norm{A_{2^k}}_\infty \le \sqrt{2^{k+1}}.
	\end{equation}

	\begin{thm}\label{thm-torus}
	The weighted convolution algebra $\ell^1(\mathbb{Z}^n, \rho_\alpha)$, $\alpha>0$ with the maximal operator space structure is
	completely isomorphic to an operator algebra if and only if $\alpha > \frac{d(\mathbb{T}^n)}{2} = \frac{n}{2}$.
	\end{thm}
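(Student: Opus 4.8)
The plan is to establish the two directions separately, using the characterization \eqref{eq-Op-alg-abelian}: $\ell^1(\mathbb{Z}^n,\rho_\alpha)$ is completely isomorphic to an operator algebra iff the map $\widetilde\Gamma$ sending $(a_k)\mapsto (T(i,j)a_{i+j})$ is completely bounded from $\ell^\infty(\mathbb{Z}^n)$ into $\ell^\infty(\mathbb{Z}^n)\otimes_{eh}\ell^\infty(\mathbb{Z}^n)$, where $T(i,j)=\rho_\alpha(i+j)/(\rho_\alpha(i)\rho_\alpha(j))=\tfrac{(1+\|i+j\|)^\alpha}{(1+\|i\|)^\alpha(1+\|j\|)^\alpha}$. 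For the positive direction ($\alpha>n/2$), the idea is to factor $\widetilde\Gamma$ through the pointwise multiplication $VN$-module action already known to be completely contractive, so it suffices to exhibit the multiplier matrix $T$ as a sum of a row Littlewood multiplier and a column Littlewood multiplier in the sense of Theorem \ref{thm-multiplier}; since $d_\sigma=1$ here, membership in $\mathcal{T}^2_r+\mathcal{T}^2_c$ amounts to writing $T=T_r+T_c$ with $\sup_i\sum_j|T_r(i,j)|^2<\infty$ and $\sup_j\sum_i|T_c(i,j)|^2<\infty$. The natural split uses the triangle inequality $1+\|i+j\|\le (1+\|i\|)(1+\|j\|)$, which bounds $T(i,j)\le 1$, but more usefully one estimates $T(i,j)\le \min\{(1+\|i\|)^{-\alpha},(1+\|j\|)^{-\alpha}\}\cdot(\text{const})$ by splitting according to whether $\|i\|\le\|j\|$ or not: on $\{\|i\|\le\|j\|\}$ one has $1+\|i+j\|\le 1+\|i\|+\|j\|\le (1+\|i\|)(1+\|j\|)$ hence $T(i,j)\le (1+\|i\|)^{-\alpha}\cdot\big(\tfrac{1+\|i+j\|}{1+\|j\|}\big)^\alpha$; but $\|i+j\|\ge \|j\|-\|i\|$ does not immediately help, so one instead bounds each half-matrix by $C(1+\|i\|)^{-\alpha}$ (resp. $C(1+\|j\|)^{-\alpha}$) after noting $1+\|i+j\|\le (1+2\max\{\|i\|,\|j\|\})$, giving on the first half $T(i,j)\le C(1+\|j\|)^\alpha/((1+\|i\|)^\alpha(1+\|j\|)^\alpha)=C(1+\|i\|)^{-\alpha}$. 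Then $\sup_i\sum_{j:\|j\|\ge\|i\|}T(i,j)^2 \le C\sup_i (1+\|i\|)^{-2\alpha}\cdot\#\{j:\|i+j\| \text{ ranges}\}$ — this needs care, so instead fix the row $i$ and sum over $j$: after reindexing $k=i+j$ one gets $\sum_k T(i, k-i)^2$, and using $T(i,k-i)\le C(1+\|k\|)^\alpha/((1+\|i\|)^\alpha(1+\|k-i\|)^\alpha)$ one bounds this by a shifted Epstein-type series; the clean estimate is that the column part satisfies $\sum_i|T_c(i,j)|^2\le C\sum_i(1+\|i+j\|)^{2\alpha}/((1+\|i\|)^{2\alpha}(1+\|j\|)^{2\alpha})$ with the summand $\le C(1+\|j\|)^{-2\alpha}$ times something summable only when $2\alpha>n$, i.e. Lemma \ref{lem-summability} applies. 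I expect the delicate bookkeeping here to be the routine-but-fiddly part, not the conceptual obstacle.

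For the negative direction ($\alpha\le n/2$), the plan is to produce, for each $N$, a finite submatrix of $\widetilde\Gamma$ whose cb-norm (as a map between the relevant finite-dimensional truncations) blows up, using the Rudin--Shapiro/Hankel machinery recalled just before the statement. Concretely, one restricts attention to a one-dimensional sub-progression or a box, and feeds in the Hankelian sign matrices $A_{2^k}=Q(\overline{P}_k)$ with $\|A_{2^k}\|_\infty\le\sqrt{2^{k+1}}$: because these are Hankelian, the matrix $(A_{2^k}(i,j))_{i,j}$ arises as $\widetilde\Gamma$ applied to a suitable $\ell^\infty$ sequence after dividing out the weight, and the extended Haagerup norm of a Hankelian matrix is controlled below by a Schatten-type quantity that the $\pm 1$ entries make large. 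The point is that if $\widetilde\Gamma$ were cb, then composing with $Q$ and the weight normalization would force $(1+\|i+j\|)^\alpha/((1+\|i\|)^\alpha(1+\|j\|)^\alpha)$ to be an $\ell^\infty\otimes_{eh}\ell^\infty$-multiplier of Hankel matrices with uniformly bounded norm; testing against $A_{2^k}$ supported on $\{0,\dots,2^k-1\}$ where the weights are all $\Theta((2^k)^\alpha)$ up to constants, the multiplier is bounded above and below by constants on the relevant diagonal band $\{i+j\approx 2^k\}$, so $\widetilde\Gamma$ cb would imply $\sup_k \|A_{2^k}\|_{eh}/(\text{something})<\infty$; but the eh-norm (equivalently $\gamma^c_2+\gamma^r_2$ norm via Theorem \ref{thm-multiplier}) of a $2^k\times 2^k$ $\pm1$ Hankel matrix is $\gtrsim 2^{k/2}\cdot 2^{k\alpha}/2^{k\alpha}$ whereas cb-boundedness caps it — the contradiction emerges precisely at the threshold $\alpha=n/2$ after accounting for the $n$-dimensional volume growth $\#\{i\in\mathbb{Z}^n:\|i\|\le R\}\asymp R^n$. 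The main obstacle, and the step requiring the most genuine work, is this lower bound: one must argue that no $\ell^\infty\otimes_{eh}\ell^\infty$ representation of the weighted coproduct can have bounded norm, which is where the Rudin--Shapiro polynomials are essential (their flatness $|P_k|^2+|Q_k|^2=2^{k+1}$ is exactly what defeats any factorization), and one must correctly match the $n$-dimensional weight growth against the $2^{k/2}$ gain from the sign matrix; getting the exponents to meet exactly at $n/2$ is the crux.

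In summary, I would (i) recall the operator-algebra criterion \eqref{eq-Op-alg-abelian} and reduce the positive direction to finding a $\mathcal{T}^2_r+\mathcal{T}^2_c$ decomposition of the multiplier matrix $T$; (ii) perform that decomposition by the $\|i\|\lessgtr\|j\|$ split and invoke Lemma \ref{lem-summability} to see the row/column $\ell^2$-sums are finite exactly when $\alpha>n/2$, then apply Theorem \ref{thm-multiplier} and Corollary \ref{cor-moreopmod} together with complete contractivity of $\Gamma$; (iii) for the negative direction, fix a large box in $\mathbb{Z}^n$, use Nehari's theorem to realize the Rudin--Shapiro $\pm1$ Hankel matrices $A_{2^k}$ inside the range of the weighted coproduct, and derive the contradiction from \eqref{eq-Shapiro-matrix} combined with the $n$-dimensional counting estimate. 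I expect the positive direction's estimates to be straightforward given the tools already developed, and the sharp lower bound in the negative direction — extracting divergence exactly at $\alpha=n/2$ from the interplay of Rudin--Shapiro flatness and lattice-point growth — to be where the real difficulty lies.
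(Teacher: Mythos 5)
Your positive direction is essentially the paper's argument: from $1+\norm{i+j}\le(1+\norm{i})+(1+\norm{j})$ one bounds $T^\alpha(i,j)$ by a constant times $(1+\norm{i})^{-\alpha}+(1+\norm{j})^{-\alpha}$ (your split along $\norm{i}\lessgtr\norm{j}$ is a harmless variant), so that $T^\alpha$ decomposes into a row and a column Littlewood multiplier whose square sums are exactly the Epstein series of Lemma \ref{lem-summability}; Theorem \ref{thm-multiplier} (together with the module action absorbing the bounded entrywise factor) then gives complete boundedness of $\widetilde{\Gamma}$ for $\alpha>\frac{n}{2}$. That half is sound and matches the paper.

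The negative direction, however, has a genuine gap. First, your claim that $A_{2^k}$ ``arises as $\widetilde{\Gamma}$ applied to a suitable $\ell^\infty$ sequence after dividing out the weight'' cannot be made literal: $\widetilde{\Gamma}(a)=(a_{i+j}T^\alpha(i,j))_{i,j}$ and $T^\alpha(i,j)$ is not a function of $i+j$, so no sequence $a$ yields the unweighted sign Hankel matrix. Second, your fallback --- restricting to a band where the weights are all $\Theta((2^k)^\alpha)$ and treating the multiplier as ``bounded above and below by constants'' there --- fails twice over: the $\ell^\infty\otimes_{eh}\ell^\infty$ (i.e.\ Schur) norm is not monotone under entrywise comparability, so $\norm{(b_{i+j}T^\alpha(i,j))}_{eh}\gtrsim d^{-\alpha}\norm{(b_{i+j})}_{eh}$ would itself require a reciprocal-multiplier bound that is essentially the problem you are trying to solve; and even granting it, on such a band the best you get is $\norm{\widetilde{\Gamma}}\gtrsim d^{\frac{n}{2}-\alpha}$, which diverges only for $\alpha<\frac{n}{2}$ and misses the endpoint $\alpha=\frac{n}{2}$ that the theorem must cover. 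The paper's argument supplies precisely the quantitative step you flag as ``the crux'': evaluate $\widetilde{\Gamma}$ at $b=a^{\otimes n}$ on the full positive box $I^n_d$, Schur-multiply the output against $B=(b_{i+j})$ (legitimate since $b_{i+j}^2=1$, and the eh-norm of a scalar matrix is the Schur multiplier norm by \cite[Theorem 3.1]{Spr1}) to recover the truncation $T^\alpha_d$ exactly, lower-bound $\norm{T^\alpha_d}_\infty\ge 2^{-\alpha}d^{\frac{n}{2}}\bigl(\sum_{i\in I^n_d}(1+\norm{i})^{-2\alpha}\bigr)^{\frac{1}{2}}$ by testing on the all-ones vector (using $\norm{i+j}=\norm{i}+\norm{j}$ on the positive orthant), and divide by $\norm{B}_\infty\le(2d)^{\frac{n}{2}}$ from \eqref{eq-Shapiro-matrix}. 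The $d^{\pm n/2}$ factors cancel and what survives is the partial Epstein sum, which diverges for every $\alpha\le\frac{n}{2}$ by Lemma \ref{lem-summability}; it is this retention of the full range of weights on the box, rather than a constant-weight band, that produces unboundedness at the critical exponent. Without this (or an equivalent) estimate your sketch does not close the negative direction.
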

\begin{proof}
Let $T^\alpha$ be the matrix \eqref{eq-co prod-weight} associated to $\rho_\alpha$, which means
	$$T^\alpha(i,j) = \left( \frac{1+\norm{i+j}}{(1+\norm{i})(1+\norm{j})}\right)^\alpha.$$
We need to determine for which value of $\alpha$, the mapping $\widetilde{\Gamma}$ defined in \eqref{eq-Op-alg-abelian} is completely bounded.
Clearly we have
	$$T^\alpha(i,j) \le \left( \frac{1}{1+\norm{i}} + \frac{1}{1+\norm{j}} \right)^\alpha
	\le 2^\alpha\left( \frac{1}{(1+\norm{i})^\alpha} + \frac{1}{(1+\norm{j})^\alpha} \right),$$
so that $T^\alpha = S\widetilde{T_1^\alpha}+S\widetilde{T_2^\alpha}$, where
 $$\widetilde{T_1^\alpha}(i,j)=\frac{1}{(1+\norm{i})^{2\alpha}} \ \ \ , \ \ \
 \widetilde{T_2^\alpha}(i,j)=\frac{1}{(1+\norm{j})^{2\alpha}}$$
and $S\in \ell^\infty(\mathbb{Z}^n \times \mathbb{Z}^n)$ with $0 <S \leq 2^\alpha$. Thus $T^\alpha \in \mathcal{T}^2=\mathcal{T}^2_r + \mathcal{T}^2_c$ provided that
	$$\sum_{i\in \mathbb{Z}^n} \frac{1}{(1+\norm{i})^{2\alpha}}<\infty.$$
Now we have the positive result for $\alpha > \frac{n}{2}$ by Theorem \ref{thm-multiplier} and Lemma \ref{lem-summability}.

For the negative direction, we consider the restricted sequence $T^\alpha_d$ of $T^\alpha$ to the set of indices $I^n_d \times I^n_d$, where
$$I^n_d = \{i = (i_1,\cdots,i_n) : 1\le i_1,\cdots, i_n \le d\}.$$
Then we may regard $T^\alpha_d$ as a matrix acting on $\ell^2(I^n_d)$, and the norm of $T^\alpha_d$ in
$\ell^\infty(\mathbb{Z}^n) \otimes_{eh} \ell^\infty(\mathbb{Z}^n)$ is exactly the Schur norm of $T^\alpha_d$ \cite[Theorem 3.1]{Spr1}.
Moreover, we have a lower estimate of the operator norm of $T^\alpha_d$ as follows.
	\begin{equation}\label{eq-op-norm}
	\norm{T^\alpha_d}_\infty \ge 2^{-\alpha}d^{\frac{n}{2}}
	\left(\sum_{i\in I^n_d} \frac{1}{(1+\norm{i})^{2\alpha}}\right)^{\frac{1}{2}}.
	\end{equation}
Indeed, if we set $\displaystyle v=\sum_{i\in I^n_d} e_i \in \ell^2(I^n_d)$, then $\|v\|_2=d^{\frac{n}{2}}$ and
	\begin{align*}
	\norm{T^\alpha_d v}_2
	& = \norm{\left[\sum_{j\in I^n_d}\left( \frac{1+\norm{i+j}}{(1+\norm{i})(1+\norm{j})}\right)^\alpha \right]_{i\in I^n_d}}_2\\
	& \ge \norm{\left[2^{-\alpha}
	\sum_{j\in I^n_d}\left(\frac{1}{1+\norm{i}}+\frac{1}{1+\norm{j}}\right)^\alpha \right]_{i\in I^n_d}}_2\\
	& \ge 2^{-\alpha}\norm{\left[\sum_{j\in I^n_d}\left(\frac{1}{1+\norm{i}}\right)^\alpha \right]_{i\in I^n_d}}_2\\
	& =  2^{-\alpha}d^n\left(\sum_{i\in I^n_d} \frac{1}{(1+\norm{i})^{2\alpha}}\right)^{\frac{1}{2}}.
	\end{align*}

Now we recall a sequence of Hankelian matrices $A_d\in M_d$, $d = 2^k\ge 1$ in \eqref{eq-Shapiro-matrix}. Then we have
	$$A_d = (a_{i+j})^d_{i,j=1}$$
with $a_i \in \{\pm 1\}$.
Let $\displaystyle a = \sum_{i=1}^d a_i \delta_i \in \ell^\infty(I^1_d)$ and
	$$b=a\otimes \cdots \otimes a \in \ell^\infty(I^n_d) \subset \ell^\infty(\z^n),$$
i.e. $b_{i_1,\cdots,i_n} = a_{i_1}\cdots a_{i_n}$. Then the associated Hankel matrix
of b, i.e.	$$B= (b_{i+j})_{i,j\in I^n_d},$$
is nothing but $B = A_d\otimes \cdots \otimes A_d$, the $n$-tensor power of $A_d$.
Since we have
	$$\widetilde{\Gamma}(b) = \left[b_{i+j}T^\alpha_d(i,j)\right]_{i,j\in I^n_d}$$
and each $b_i = \pm 1$ we get
	$$\norm{T^\alpha_d}_\infty \le \norm{B}_\infty \cdot \norm{\left[b_{i+j}T^\alpha_d(i,j)\right]_{i,j\in I^n_d}}_{\ell^\infty(I^n_d) \otimes_{eh} \ell^\infty(I^n_d)}.$$
Then, by \eqref{eq-Shapiro-matrix}, $\|B\|_\infty \leq \|A\|_\infty^n \leq (2d)^{\frac{n}{2}}$,
we get
	\begin{align*}
	\norm{\widetilde{\Gamma}} & \ge
\norm{\left[b_{i+j}T^\alpha_d(i,j)\right]_{i,j\in I^n_d}}_{\ell^\infty(I^n_d) \otimes_{eh} \ell^\infty(I^n_d)} \\
	& \ge \norm{B}^{-1}_\infty \norm{T^\alpha_d}_\infty\\
	& \ge d^{-\frac{n}{2}}2^{-\alpha-\frac{n}{2}}d^{\frac{n}{2}}
	\left(\sum_{i\in I^n_d} \frac{1}{(1+\norm{i})^{2\alpha}}\right)^{\frac{1}{2}} \ \ (\text{by}\ \eqref{eq-op-norm})\\
	& = 2^{-\alpha-\frac{n}{2}}\left(\sum_{i\in I^n_d} \frac{1}{(1+\norm{i})^{2\alpha}}\right)^{\frac{1}{2}}.
	\end{align*}
If $\alpha \le \frac{n}{2}$, then the right-hand side grows without bounds when $d\to \infty$ (Lemma \ref{lem-summability}),
so that we have the negative result.	
\end{proof}

In the proof of the above theorem we actually showed that the pointwise multiplication $A(\mathbb{T}^n, \rho_\alpha) \otimes_h A(\mathbb{T}^n, \rho_\alpha) \to A(\mathbb{T}^n, \rho_\alpha)$ is not even {\it bounded} for $\alpha \le \frac{n}{2}$, which allows us to extend the result of Varapolous to the multi-dimensional situation as follows.
\begin{cor}\label{cor-Varopoulos-extended}
The Beurling--Fourier algebra $A(\mathbb{T}^n, \rho_\alpha)\cong \ell^1(\z^n, \rho_\alpha)$, $n\ge 1$ is a $Q$-algebra if and only if $\alpha > \frac{n}{2}$.
\end{cor}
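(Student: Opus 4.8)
The plan is to deduce Corollary \ref{cor-Varopoulos-extended} from Theorem \ref{thm-torus} together with the extra information extracted from its proof. The positive direction is where a genuinely new ingredient is needed: Theorem \ref{thm-torus} only says that for $\alpha>\frac{n}{2}$ the algebra $\ell^1(\mathbb{Z}^n,\rho_\alpha)$ is completely isomorphic to an \emph{operator} algebra, whereas a $Q$-algebra is by definition a quotient of a uniform algebra, a strictly stronger property. So first I would revisit the argument of Theorem \ref{thm-torus} in the commutative (function-algebra) setting. The key point is that the decomposition $T^\alpha = S\widetilde{T^\alpha_1}+S\widetilde{T^\alpha_2}$ produced there expresses the co-multiplication matrix as lying in the \emph{classical} Littlewood space $\mathcal{T}^2 = \mathcal{T}^2_r + \mathcal{T}^2_c$, and the classical Littlewood--Bonami multiplier theorem (\cite{BF}) — rather than its non-commutative operator-space refinement Theorem \ref{thm-multiplier} — already shows that such a matrix is a Schur multiplier of bounded \emph{Banach-space} norm. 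One then invokes Varopoulos's tensor-algebra/$Q$-algebra machinery: the fact that $T^\alpha\in\mathcal{T}^2$ means the bilinear map underlying the pointwise product factors through $\ell^\infty$ in a way that makes $\ell^1(\mathbb{Z}^n,\rho_\alpha)$ a quotient of a tensor algebra $V(X,Y)$, hence a $Q$-algebra; this is exactly the $n=1$ mechanism of \cite{Var72} carried over verbatim, since $\mathcal{T}^2(\mathbb{Z}^n)$ plays the same role for $\mathbb{Z}^n$ as $\mathcal{T}^2(\mathbb{Z})$ did for $\mathbb{Z}$.

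For the converse direction no new work is required: a $Q$-algebra is in particular an operator algebra (as recalled in the introduction, via \cite[Proposition 2.3.4]{BLe04}), and moreover the proof of Theorem \ref{thm-torus} establishes something stronger than failure of the operator-algebra property — it shows that for $\alpha\le\frac{n}{2}$ the pointwise multiplication $A(\mathbb{T}^n,\rho_\alpha)\otimes_h A(\mathbb{T}^n,\rho_\alpha)\to A(\mathbb{T}^n,\rho_\alpha)$ is not even bounded (this is the content of the paragraph immediately preceding the corollary). Since being a $Q$-algebra would force, in particular, the multiplication to be bounded on the Haagerup tensor product, the negative half of Theorem \ref{thm-torus} rules out the $Q$-algebra property for $\alpha\le\frac{n}{2}$ a fortiori. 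So the structure of the write-up is: ``$\Rightarrow$'' is immediate from Theorem \ref{thm-torus}; ``$\Leftarrow$'' is the Littlewood--Bonami plus Varopoulos argument sketched above.

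The main obstacle I anticipate is making the positive direction genuinely rigorous rather than a wave at \cite{Var72}: one must verify that Varopoulos's passage from ``the Fourier multiplier of the product map belongs to the Littlewood space $\mathcal{T}^2$'' to ``the algebra is a $Q$-algebra'' goes through for the group $\mathbb{Z}^n$ and for the specific bilinear structure $(a_i),(b_j)\mapsto (T^\alpha(i,j)a_{i+j})$ coming from a weight on $\mathbb{Z}^n$, not merely for $\mathbb{Z}$ with the flat weight. Concretely, one needs that membership of $T^\alpha$ in $\mathcal{T}^2_r+\mathcal{T}^2_c$ yields a factorization of the multiplication through a uniform algebra on a compact space built from two copies of (a quotient of) $\mathbb{Z}^n$; this is the Grothendieck/Varopoulos characterization of $V(X,Y)$-valued bilinear maps and is exactly the tool that makes ``$\mathcal{T}^2$-multiplier $\Rightarrow$ $Q$-algebra'' work. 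Once that citation is in place the rest is bookkeeping. An honest write-up will therefore either (i) reproduce the short Varopoulos argument adapted to $\mathbb{Z}^n$, or (ii) cite \cite{Var72} and \cite{BF} and note that the only change from the classical case is replacing the summability condition $\sum_{i\in\mathbb{Z}}(1+|i|)^{-2\alpha}<\infty$ (equivalently $\alpha>\tfrac12$) by its $n$-dimensional analogue $\sum_{i\in\mathbb{Z}^n}(1+\|i\|)^{-2\alpha}<\infty$, which by Lemma \ref{lem-summability} is $\alpha>\tfrac{n}{2}$.
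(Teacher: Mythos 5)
Your proposal is correct in substance and runs through the same circle of ideas (Littlewood decomposition plus Grothendieck--Varopoulos), but it is packaged differently from the paper. The paper does not re-run any Littlewood argument for the positive direction: it recasts the whole corollary in Varopoulos's framework of injective algebras and $DQ$-algebras. Since $A(\mathbb{T}^n,\rho_\alpha)$ is an $\ell^1$-space with the maximal operator space structure, Grothendieck's theorem identifies boundedness of the multiplication on the Haagerup tensor product with injectivity (boundedness on $\otimes_\eps$); Varopoulos's theorem from \cite{Var72b} gives injective $\Leftrightarrow$ $DQ$; and the lifting property of $\ell^1$-spaces \cite{Gro} gives $DQ\Leftrightarrow Q$. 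Both implications of the corollary then drop out simultaneously from the single dichotomy already recorded after Theorem \ref{thm-torus}: the multiplication on $\otimes_h$ is completely bounded for $\alpha>\frac{n}{2}$ and not even bounded for $\alpha\le\frac{n}{2}$. Your positive direction --- redoing the classical Littlewood/Schur-multiplier estimate and invoking Varopoulos's tensor-algebra mechanism from \cite{Var72} --- does work, but the ``passage'' you flag as the main obstacle (from $T^\alpha\in\mathcal{T}^2$ to the $Q$-algebra property) is precisely what the paper's chain of citations formalizes, so in effect you would be reproving the injective $\Rightarrow DQ\Rightarrow Q$ step in this special case rather than quoting it. Your negative direction differs from the paper's (which goes $Q\Rightarrow DQ\Rightarrow$ injective $\Rightarrow$ bounded on $\otimes_h$) and is slightly more elementary: $Q$-algebra $\Rightarrow$ operator algebra $\Rightarrow$ multiplication bounded on $\otimes_h$; just make explicit that this last implication uses the maximal operator space structure, so that the Banach-algebra isomorphism onto the $Q$-algebra is automatically completely bounded and the bounded factorization of the multiplication through the quotient operator algebra descends to $A(\mathbb{T}^n,\rho_\alpha)\otimes_h A(\mathbb{T}^n,\rho_\alpha)$. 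With that point spelled out and the Varopoulos citation pinned down, your outline yields the corollary; the paper's route buys economy (one dichotomy feeds both directions and no new multiplier computation is needed), while yours avoids the lifting property in the negative direction.
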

\begin{proof}
We first recall some related notions. We say that a Banach algebra $\A$ is called an {\it injective algebra} if the algebra multiplication $m:\A \otimes_\gamma \A \to \A$ extends to a bounded map $m: \A \otimes_\eps \A \to \A$, where $\otimes_\gamma$ and $\otimes_\eps$ refer to projective and injective tensor products of Banach spaces, respectively. In \cite{Var72b} Varapolous introduced the notion of $DQ$-algebra (direct $Q$-algebra). We say that a commutative Banach algebra $\A$ is a $DQ$-algebra if there is a uniform algebra $\B$, a Banach algebra homomorphism $\varphi : \B \to \A$ and a bounded linear map $\psi : \A \to \B$ such that $\varphi\circ \psi = id_\A$. The main theorem in the same paper says that for a commutative Banach algebra $\A$ we have $\A$ is injective if and only if $\A$ is a $DQ$-algebra. The Beurling--Fourier algebra $A(\mathbb{T}^n, \rho_\alpha)\cong \ell^1(\z^n, \rho_\alpha)$ is a $\ell^1$-space, so that we have two advantages. First, $A(\mathbb{T}^n, \rho_\alpha)$ is an injective algebra if and only if the pointwise multiplication $A(\mathbb{T}^n, \rho_\alpha) \otimes_h A(\mathbb{T}^n, \rho_\alpha) \to A(\mathbb{T}^n, \rho_\alpha)$ extends to a {\it bounded} map by Grothendieck's theorem (see \cite[(1.47), (A.7)]{BLe04} and \cite[(3.11)]{Pis3}). Secondly, $A(\mathbb{T}^n, \rho_\alpha)$ is a $DQ$-algebra if and only if it is a $Q$-algebra, as follows from the lifting property of $\ell^1$-spaces \cite{Gro}. Recall that a Banach space $X$ is said to have the lifting property if for any bounded linear map $u:X \to Y/Z$ and $\eps>0$ there is a bounded linear map $v:X\to Y$ such that $q\circ v=u$ and $\norm{v}\le (1+\eps)\norm{u}$, where $q:Y\to Y/Z$ is the canonical quotient map. Now the proof of Theorem \ref{thm-torus} tells us that  $A(\mathbb{T}^n, \rho_\alpha)$ is an injective algebra if and only if $\alpha > \frac{n}{2}$. Indeed, we have shown in the proof of Theorem \ref{thm-torus} that the pointwise multiplication $A(\mathbb{T}^n, \rho_\alpha) \otimes_h A(\mathbb{T}^n, \rho_\alpha) \to A(\mathbb{T}^n, \rho_\alpha)$ is completely bounded for $\alpha > \frac{n}{2}$ and not bounded for $\alpha \le \frac{n}{2}$. Combining the above observations we get the desired conclusion.
\end{proof}

\subsection{The general case of compact connected Lie groups with polynomial weights}

When the group is a compact connected Lie group, then we have positive results if the order of the polynomial weight is strictly greater than the half of the dimension of the group.

	\begin{thm}\label{thm-general-poly}
	Let $G$ be a compact connected Lie group, and let $\om^\alpha_S$ be the polynomial weight of order $\alpha$ on $\widehat{G}$ $($Definition \eqref{def-poly-weight}$)$. Then $A(G, \om^\alpha_S)$ is completely isomorphic to an operator algebra
	if $\alpha>\frac{d(G)}{2}$.
	\end{thm}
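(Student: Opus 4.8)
The plan is to reduce the statement for a general compact connected Lie group $G$ to the structure already developed: I want to show that the operator $\Gamma(W)(W^{-1}\otimes W^{-1})$ associated to the polynomial weight $\om^\alpha_S$ lies in $\mathcal{T}^2_c+\mathcal{T}^2_r$ (with $\Sigma=\widehat G$ and $d_\sigma=d_\pi$), so that Theorem \ref{thm-multiplier} together with the complete contractivity of $\Gamma:VN(G)\to VN(G)\vnt VN(G)$ forces $\widetilde{\Gamma}$ in \eqref{eq-Op-alg} to be completely bounded, which by Blecher's theorem yields that $A(G,\om^\alpha_S)$ is completely isomorphic to an operator algebra. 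So the whole proof amounts to an estimate on the ``weight deformation'' matrix.

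First I would unwind what $\Gamma(W)(W^{-1}\otimes W^{-1})$ is using \eqref{eq-weight3}: indexed by $(\pi,\pi')\in\widehat G\times\widehat G$, its $(\pi,\pi')$-block is $\bigoplus_{\sigma\subset\pi\otimes\pi'}\frac{\om^\alpha_S(\sigma)}{\om^\alpha_S(\pi)\om^\alpha_S(\pi')}{\rm id}_{M_{d_\sigma}}$. Using the subadditivity \eqref{eq3} of $\tau_S$, for every $\sigma\subset\pi\otimes\pi'$ one has $1+\tau_S(\sigma)\le 1+\tau_S(\pi)+\tau_S(\pi')\le (1+\tau_S(\pi))(1+\tau_S(\pi'))$, hence
\[
\frac{\om^\alpha_S(\sigma)}{\om^\alpha_S(\pi)\om^\alpha_S(\pi')}
=\left(\frac{1+\tau_S(\sigma)}{(1+\tau_S(\pi))(1+\tau_S(\pi'))}\right)^\alpha
\le\left(\frac{1}{1+\tau_S(\pi)}+\frac{1}{1+\tau_S(\pi')}\right)^\alpha
\le 2^\alpha\left(\frac{1}{(1+\tau_S(\pi))^\alpha}+\frac{1}{(1+\tau_S(\pi'))^\alpha}\right),
\]
exactly as in the torus case of Theorem \ref{thm-torus}. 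This lets me split the matrix as $S_1 R_1 + S_2 C_2$, where $R_1$ has constant-on-$\pi$ entries $(1+\tau_S(\pi))^{-\alpha}$, $C_2$ has constant-on-$\pi'$ entries $(1+\tau_S(\pi'))^{-\alpha}$, and $S_1,S_2$ are diagonal (block-scalar) multipliers bounded by $2^\alpha$; being bounded block-scalar multipliers of $VN(G)\vnt VN(G)$, they do not affect membership in $\mathcal{T}^2_r$ or $\mathcal{T}^2_c$. The row/column matrices $R_1\in\mathcal{T}^2_r$, $C_2\in\mathcal{T}^2_c$ precisely when $\sup_{\pi'}\sum_{\pi\in\widehat G}d_\pi\frac{1}{(1+\tau_S(\pi))^{2\alpha}}<\infty$, i.e. when $\sum_{\pi\in\widehat G}\frac{d_\pi}{(1+\tau_S(\pi))^{2\alpha}}<\infty$; here I must be careful that the relevant $L^2$-norms in the definition of $\mathcal{T}^2_r,\mathcal{T}^2_c$ carry the $d_\pi$-weighting, and that each $(\pi,\pi')$-block, though a matrix, is a scalar multiple of ${\rm id}_{M_{d_\sigma}}$ summed over $\sigma\subset\pi\otimes\pi'$, so the block Schatten-$2$ norms reorganize into $d_\pi$-weighted scalar sums. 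Thus the problem reduces to convergence of a single series over $\widehat G$.

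The main obstacle is therefore the estimate $\sum_{\pi\in\widehat G}\frac{d_\pi}{(1+\tau_S(\pi))^{2\alpha}}<\infty$ for $\alpha>\frac{d(G)}{2}$. To handle it I would use the highest-weight parametrization: by Remark \ref{re-lenght-norm 1-equ}, $\tau_S(\pi)$ is equivalent to $\norm{\pi}_1=\sum|a_i|+\sum b_j$, so up to constants I may replace $(1+\tau_S(\pi))$ by $(1+\norm{\pi}_1)$; and the Weyl dimension formula bounds $d_\pi$ by a polynomial in the coordinates of $\Lambda_\pi$ of degree equal to the number of positive roots $\#\Phi^+ = \frac{d(G)-\operatorname{rank} G}{2}$. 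Grouping the lattice points $\Lambda_\pi$ by the value $m=\norm{\pi}_1$, the number of such points grows like $m^{\,r+l-1}$ where $r+l=\operatorname{rank} G$ (this is where a careful, but routine, lattice-point count enters), and each contributes at most $C\,m^{\#\Phi^+}(1+m)^{-2\alpha}$, so the series is dominated by $\sum_m m^{\operatorname{rank}G-1+\#\Phi^+-2\alpha} = \sum_m m^{(\operatorname{rank}G-1)+\frac{d(G)-\operatorname{rank}G}{2}-2\alpha}$, which converges iff $(\operatorname{rank}G-1)+\frac{d(G)-\operatorname{rank}G}{2}-2\alpha<-1$, i.e. iff $2\alpha>\frac{d(G)+\operatorname{rank}G}{2}$. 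This is slightly \emph{weaker} than what is claimed, so I would instead run the argument so that dimension and length are estimated together (the sum $\sum_\pi d_\pi^{2}/(\ldots)$ perspective, or more simply bounding $d_\pi \lesssim \prod_{\text{positive roots }\alpha}(1+\langle\Lambda_\pi,\alpha^\vee\rangle)$ against the full product over positive roots whose contributions pair up the $l$ lattice directions) — the honest route is to note $\sum_\pi d_\pi\, t^{\tau_S(\pi)}$ has radius of convergence matching the Weyl character growth, giving the exponent $\frac{d(G)}{2}$; alternatively, invoke the appendix's estimates on the dimension weight. I expect this lattice-point/Weyl-formula bookkeeping to be the only real work, everything else being a direct application of Theorem \ref{thm-multiplier} and Blecher's theorem via the decomposition above.
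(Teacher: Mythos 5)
Your overall strategy---dominate the blocks of $\Gamma(W)(W^{-1}\otimes W^{-1})$ by a row term plus a column term via subadditivity of the length function, absorb the bounded positive factor using the module structure, and invoke Theorem \ref{thm-multiplier} together with Blecher's theorem---is exactly the route the paper takes. The genuine gap is in your summability criterion. The row piece is the elementary tensor $\bigl(\bigoplus_{\pi} (1+\tau_S(\pi))^{-\alpha}{\rm id}_{M_{d_\pi}}\bigr)\otimes 1_{VN(G)}$, and its Littlewood norm is the $L^2(\Sigma)$-norm of the first leg; since $\norm{A}_{L^2}^2=\sum_{\pi} d_\pi\norm{A_\pi}_2^2$ and $\norm{{\rm id}_{M_{d_\pi}}}_2^2=d_\pi$, the correct criterion is $\sum_{\pi\in\widehat{G}} d_\pi^{2}\,(1+\tau_S(\pi))^{-2\alpha}<\infty$, not $\sum_{\pi} d_\pi\,(1+\tau_S(\pi))^{-2\alpha}<\infty$ as you wrote: you dropped a factor of $d_\pi$. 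Your lattice-point count is then carried out on the wrong series, which is why you land on the spurious threshold $2\alpha>\frac{d(G)+\operatorname{rank}G}{2}$ and begin hedging; note moreover that this threshold is \emph{less} restrictive than $2\alpha>d(G)$, so the discrepancy you perceived runs in the opposite direction from what you describe. Neither of your proposed repairs closes the gap: the appendix contains no estimate of this kind (it proves Condition \ref{con1} for $n=3$ and Proposition \ref{prop-exp-domination}), and the generating-function remark about $\sum_\pi d_\pi t^{\tau_S(\pi)}$ is unsubstantiated and again involves only one power of $d_\pi$.

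The fix is exactly what the paper does. With the correct $d_\pi^2$-weighting, the count you already set up works: there are $O(m^{\operatorname{rank}G-1})$ highest weights with $\norm{\pi}_1\approx m$, and $d_\pi=O(m^{\#\Phi^+})$ with $2\,\#\Phi^+=d(G)-\operatorname{rank}G$, so $\sum_\pi d_\pi^2(1+\norm{\pi}_1)^{-2\alpha}\lesssim\sum_m m^{\,d(G)-1-2\alpha}$, which converges precisely when $\alpha>\frac{d(G)}{2}$; the paper simply cites \cite[Lemma 5.6.7]{Wal73} for this. Two smaller points you should make explicit: first, after writing $T=S(\tilde T_1+\tilde T_2)$ with $S$ bounded, applying Theorem \ref{thm-multiplier} to $\widetilde{\Gamma}(A)=\Gamma(A)T$ requires the column-type factor to sit on the correct side of $\Gamma(A)S$, and the paper uses centrality of $\tilde T_2$ to write $\Gamma(A)S\tilde T_2=\tilde T_2\Gamma(A)S$; second, absorbing $S$ is justified by the operator module statement (Corollary \ref{cor-moreopmod}), with the module acting on the appropriate side for each of the two pieces, rather than by a generic claim that bounded block-scalar factors ``do not affect membership.''
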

\begin{proof}

Suppose that $\alpha>\frac{d(G)}{2}$. For simplicity, we write $\om$ instead of $\om^\alpha_S$. We set $W = \bigoplus_{\pi\in \widehat{G}} \om(\pi){\rm id}_{M_{d_\pi}}\in VN(G)$ be the operator associated to $\om$  and
	$$T = \Gamma(W)(W^{-1}\otimes W^{-1}) \in VN(G)\bar{\otimes}VN(G).$$
Then, by \eqref{eq-weight3}, we have
	\begin{equation*}\label{eq-T}
	T(\pi, \pi') = \bigoplus_{\sigma \subset \pi\otimes \pi'} \left(\frac{1+\tau_S(\sigma)}{(1+\tau_S(\pi))(1+\tau_S(\pi'))}\right)^\alpha {\rm id}_{M_{d_\sigma}}.
	\end{equation*}
By \eqref{eq-Op-alg}, we need to show that $\widetilde{\Gamma} : A \mapsto \Gamma(A)T$ is well-defined and completely bounded. To achieve this, we will apply the non-commutative Littlewood machinery developed in Section \ref{sec-Littlewood} with $L^\infty = VN(G)$ to get the decomposition of the operator $T$ into $T = T_1 + T_2$ with $T_1 \in \mathcal{T}^2_r$ and $T_2\in \mathcal{T}^2_c$. In order to do so, we first need to estimate each component of $T$ as follows.

Let $\pi, \pi' \in \widehat{G}$ and $\sigma \subset \pi\otimes \pi'$. Then, by
\eqref{eq2}, we have
	\begin{align*}
	 & \left(\frac{1+\tau_S(\sigma)}{(1+\tau_S(\pi))(1+\tau_S(\pi'))}\right)^\alpha\\
	& \le (1+C)^{-2\alpha} \left(\frac{1+\norm{\sigma}_1}{(1+\norm{\pi}_1)(1+\norm{\pi'}_1)}\right)^\alpha\\
	& \le (1+C)^{-2\alpha} \left(\frac{1+\norm{\pi}_1+\norm{\pi'}_1}{(1+\norm{\pi}_1)(1+\norm{\pi'}_1)}\right)^\alpha\\
	& \le \left(\frac{2}{(1+C)^2}\right)^\alpha\left(\frac{1}{(1+\norm{\pi}_1)^\alpha}+\frac{1}{(1+\norm{\pi'}_1)^\alpha}\right).	 \end{align*}
Hence
	$$T =S (\tilde{T_1}+\tilde{T_2}),$$
where $\tilde{T_1}, \tilde{T_2}\in VN(G) \overline{\otimes} VN(G)$ are positive and central elements defined by
	$$\tilde{T}_1(\pi, \pi')  = \frac{1}{(1+\norm{\pi}_1)^\alpha} {\rm id}_{M_{d_\pi}} \otimes {\rm id}_{M_{d_{\pi'}}}$$
and
	$$\tilde{T}_2(\pi, \pi')  = \frac{1}{(1+\norm{\pi'}_1)^\alpha} {\rm id}_{M_{d_\pi}} \otimes {\rm id}_{M_{d_{\pi'}}},$$
and $S\in VN(G) \overline{\otimes} VN(G)$ is some positive element with $\|S\|\leq \left(\frac{2}{(1+C)^2}\right)^\alpha$.
We claim that $\tilde{T}_1 \in \mathcal{T}^2_r$, $\tilde{T}_2\in \mathcal{T}^2_c$.
Indeed
	$$\tilde{T}_1 = \left(\bigoplus_{\pi\in \widehat{G}}
	\frac{1}{(1+\norm{\pi}_1)^\alpha}{\rm id}_{M_{d_\pi}}\right) \otimes 1_{VN(G)}$$
and
	$$\tilde{T}_2 = 1_{VN(G)} \otimes
	\left(\bigoplus_{\pi'\in \widehat{G}}\frac{1}{(1+\norm{\pi'}_1)^\alpha} {\rm id}_{M_{d_{\pi'}}}\right).$$
Hence
	$$\norm{\tilde{T}_1}_{\mathcal{T}^2_r}
	=  \norm{\bigoplus_{\pi\in \widehat{G}}\frac{1}{(1+\norm{\pi}_1)^\alpha} {\rm id}_{M_{d_\pi}}}_{L^2}
	=  \left(\sum_{\pi\in \widehat{G}}\frac{d^2_\pi}{(1+\norm{\pi}_1)^{2\alpha}}\right)^{\frac{1}{\alpha}},$$
so that $\tilde{T}_1 \in \mathcal{T}^2_r$ since $\alpha>\frac{d(G)}{2}$ (see \cite[Lemma 5.6.7]{Wal73}).
Similarly, $\tilde{T}_2 \in \mathcal{T}^2_c$.
Now, by the centrality of $\tilde{T_2}$, we have that for any $A \in VN(G)$,
	$$\tilde{\Gamma}(A) = \Gamma(A)T = \Gamma(A)S\tilde{T}_1 + \Gamma(A)S\tilde{T}_2
	= \Gamma(A)S\tilde{T}_1 + \tilde{T}_2\Gamma(A)S.$$
Since the maps
	$$VN(G) \overline{\otimes}VN(G) \to VN(G) \otimes_{eh} VN(G),\;\; X \mapsto X S\tilde{T}_1$$
and	$$VN(G) \overline{\otimes}VN(G) \to VN(G) \otimes_{eh} VN(G),\;\; X \mapsto \tilde{T}_2XS$$
are completely bounded by Corollary \ref{cor-moreopmod} and Theorem \ref{thm-multiplier},
we can conclude that $\tilde{\Gamma}$ is also completely bounded.

\end{proof}

In general, we were not able to obtain the negative result for $A(G, \om^\alpha_S)$. In fact, we believe this to be very difficult. However, in the special case when $G = SU(n)$, we have the following:

	\begin{thm}\label{thm-SU(n)-poly-negative}
	The Beurling--Fourier algebra $A(SU(n), \om^\alpha_S)$ is not completely isomorphic to an operator algebra if $\alpha\le\frac{n-1}{2}$.
	\end{thm}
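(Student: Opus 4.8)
The plan is to deduce this from the restriction theorem for polynomial weights together with the negative direction of the torus case, which is where all the real work has already been done. First I would invoke Theorem \ref{thm-restriction-poly}: the restriction map $R_{H_n}:A(SU(n),\om^\alpha_S)\to A(H_n,(\om^\alpha_S)_{H_n})$ is a complete quotient map, and since it is simply the restriction of functions to the maximal torus $H_n\cong\mathbb{T}^{n-1}$ it is an algebra homomorphism. Moreover Theorem \ref{thm-restriction-poly} identifies the target, up to complete isomorphism, with $A(\mathbb{T}^{n-1},\rho_\alpha)$: the weights $(\om^\alpha_S)_{H_n}$ and $\rho_\alpha$ are equivalent, and equivalent weights give rise to the same Beurling--Fourier algebra up to complete isomorphism, because the identity map intertwines the two operator space structures (the associated operators $W$ differ by a bounded, boundedly invertible central multiplier). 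Composing, I obtain a complete quotient algebra homomorphism from $A(SU(n),\om^\alpha_S)$ onto $A(\mathbb{T}^{n-1},\rho_\alpha)$.

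Next I would use that a complete quotient algebra homomorphic image of an algebra which is completely isomorphic to an operator algebra is again completely isomorphic to an operator algebra. Via Blecher's criterion this is immediate: if $q:\mathcal A\to\mathcal B$ is a complete quotient homomorphism and the multiplication of $\mathcal A$ is completely bounded on $\mathcal A\otimes_h\mathcal A$, then projectivity of the Haagerup tensor product makes $q\otimes q$ a complete quotient map, and $m_{\mathcal B}$ is obtained from $m_{\mathcal A}$ by passing to this quotient, hence is completely bounded as well. This is the same observation already recalled in the introduction through \cite[Proposition 2.3.4]{BLe04}.

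Finally I would apply Theorem \ref{thm-torus} with $n$ replaced by $n-1$: the algebra $A(\mathbb{T}^{n-1},\rho_\alpha)\cong\ell^1(\mathbb{Z}^{n-1},\rho_\alpha)$, equipped with its natural operator space structure — which is the maximal one, since $VN(\mathbb{T}^{n-1})=\ell^\infty(\mathbb{Z}^{n-1})$ is commutative and therefore minimal as an operator space — is completely isomorphic to an operator algebra if and only if $\alpha>\frac{d(\mathbb{T}^{n-1})}{2}=\frac{n-1}{2}$. Hence, when $\alpha\le\frac{n-1}{2}$ it is not; combining this with the previous two steps, $A(SU(n),\om^\alpha_S)$ cannot be completely isomorphic to an operator algebra, which is the claim.

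There is no genuinely hard step here: the substance is entirely in Theorem \ref{thm-restriction-poly} (computing the restricted weight on the maximal torus) and in the Rudin--Shapiro/Hankel matrix lower bound underlying the negative half of Theorem \ref{thm-torus}. The only points that require a little care are matching up the operator space structures — so that ``operator algebra'' is tested against the natural structure on each algebra — and checking that $R_{H_n}$ is a genuine complete quotient rather than merely a bounded surjection; both are already guaranteed by the cited results. I would also remark in passing that the identical argument, with Theorem \ref{thm-restriction-dim} in place of Theorem \ref{thm-restriction-poly}, yields the corresponding negative statement for the dimension weight $\om_\alpha$ on $SU(n)$ when $\alpha\le\frac{1}{2}$.
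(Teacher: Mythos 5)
Your proposal is correct and follows essentially the same route as the paper: restrict to the maximal torus via Theorem \ref{thm-restriction-poly}, apply the negative half of Theorem \ref{thm-torus} in dimension $n-1$, and use that a complete quotient of an operator algebra is again an operator algebra (\cite[Proposition 2.3.4]{BLe04}). The extra verifications you include (weight equivalence giving complete isomorphism, the quotient argument via Blecher's criterion, and the maximal operator space structure on $\ell^1(\mathbb{Z}^{n-1},\rho_\alpha)$) are all consistent with what the paper takes for granted.
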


\begin{proof}
It follows from Theorem \ref{thm-torus} that $A(\mathbb{T}^{n-1}, \rho_{\alpha})$ is not completely isomorphic to an operator algebra if $\alpha\le\frac{n-1}{2}$. On the other hand, by Theorem \ref{thm-restriction-poly}, $A(\mathbb{T}^{n-1}, \rho_{\alpha})$ is completely isomorphic to a
complete quotient of $A(SU(n), \om^\alpha_S)$. Hence the result follows from the fact that
a complete quotient of an operator algebra is again an operator algebra \cite[Proposition 2.3.4]{BLe04}.
\end{proof}

	\begin{rem}\label{R:BF SU(n)-not Q alg}{\rm
		\begin{enumerate}
			\item
			Theorem \ref{thm-torus} tells us that the exponent $\frac{d(G)}{2}$ is optimal when $G = \mathbb{T}^n$ whilst by comparing Theorem \ref{thm-general-poly} and Theorem \ref{thm-SU(n)-poly-negative}, we see that we have a rather big gap for the case of $SU(n)$.
			
			\item
			Varapolous showed that $A(\mathbb{T}, \rho_\alpha)$ is a $Q$-algebra if and only if $\alpha > 1/2$ \cite{Var72}. However, in general we can not expect $A(G, \om^\alpha_S)$ to be completely isomorphic to a $Q$-algebra since it may not be even completely isomorphic to a $Q$-space. Recall that an operator space $E$ is called a {\it $Q$-space} if it is a operator space quotient of a minimal operator space. More generally, the cb-distance of $E$ from a $Q$-space is defined by
				$$d_Q(E) = \inf \{ \norm{T}_{cb}\norm{T^{-1}}_{cb}\},$$
			where the infimum runs over all possible complete isomorphism $T : E \to F$ for some $Q$-space $F$. Clearly, $Q$-algebras are $Q$-spaces. Moreover, we have the following estimates (\cite[Proposition 5.4.16]{BLe04}).
				$$d_Q(C_n) = \sqrt{n}.$$
	Indeed, $A(SU(n), \om^\alpha_S)$ contains row Hilbert spaces of arbitrarily large dimensions so that $A(SU(n), \om^\alpha_S)$ is not completely isomorphic to a $Q$-space.
		\end{enumerate}
	}
	\end{rem}

\subsection{The case of compact connected non-simple Lie groups with dimension weights}\label{S:dim. weight-non-simple Lie}

In this section, we show that, for a non-simple compact Lie group $G$,
one can not find a Beurling--Fourier algebra on $G$, with dimension weights, which is isomorphic
to an operator algebra. Hence we need to restrict our attention to
simple cases (such as $SU(n)$) to obtain operator algebra in this case.

\begin{thm}\label{T:dim weight-non simple-non operator alg}
Let $G$ be a compact connected non-simple Lie group and $\alpha \geq 0$. Then
$A(G,\om_\alpha)$ is not isomorphic to an operator algebra.
\end{thm}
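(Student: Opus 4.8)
The proof needs no operator-space input: the obstruction is simply that an operator algebra is Arens regular (see \cite[Chapter 4]{DL}), that Arens regularity is inherited by closed subalgebras and by quotient algebras, and that $A(G,\om_\alpha)$ always admits a quotient which is a weighted convolution algebra on $\z^2$ carrying a \emph{product} weight — and such an algebra is never Arens regular. (This is also why the statement says "not isomorphic" rather than "not completely isomorphic".) For Step~1, since $G$ is non-simple its Lie algebra splits as $\mathfrak g=\mathfrak a_1\oplus\mathfrak a_2$ with two nonzero ideals; let $A_1,A_2$ be the corresponding closed connected normal subgroups, so $A_1A_2=G$ and $A_1\cap A_2$ is finite and central. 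Choose a circle $\mathbb T_i\subset A_i$ for $i=1,2$ — when $A_i$ is abelian, chosen so that $\widehat{A_i}\to\widehat{\mathbb T_i}$ is onto — so that generically $\mathbb T_1\mathbb T_2$ is a torus $H\cong\mathbb T^2$ in $G$. Then the restriction map $R_H\colon A(G,\om_\alpha)\to A(H,(\om_\alpha)_H)\cong\ell^1(\z^2,v)$ of \eqref{eq-restriction} is a complete, in particular Banach, quotient map of Banach algebras, where $v=(\om_\alpha)_H$.

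\emph{Step 2: $v$ is equivalent to a product weight.} Passing to the finite cover $A_1\times A_2\to G$, every $\pi\in\widehat G$ has the form $\pi_1\otimes\pi_2$ with $d_\pi=d_{\pi_1}d_{\pi_2}$ and $\pi|_H=(\pi_1|_{\mathbb T_1})\otimes(\pi_2|_{\mathbb T_2})$; hence the character $\chi_{(k,m)}$ of $H$ occurs in $\pi|_H$ precisely when $\chi_k\subset\pi_1|_{\mathbb T_1}$ and $\chi_m\subset\pi_2|_{\mathbb T_2}$, two \emph{independent} conditions. Taking in \eqref{eq-restriction-defn} the infimum over these two conditions separately, and absorbing into a constant $C(G,\alpha)$ the bounded correction forced by matching central characters on the finite group $A_1\cap A_2$, one gets
$$ v(k,m)\ \asymp\ g_1(k)^\alpha\,g_2(m)^\alpha,\qquad g_i(k)=\inf\{d_{\pi_i}:\chi_k\subset\pi_i|_{\mathbb T_i}\},\quad g_i(0)=1. $$
(If $A_i$ is a torus, $g_i\equiv1$; otherwise $g_i(k)\to\infty$ as $|k|\to\infty$; only the displayed estimate is used below.) This is the same type of combinatorial computation as Proposition \ref{prop-subrep}/Theorem \ref{thm-restriction-dim}, but now carried out \emph{across} the two factors.

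\emph{Step 3: product weights on $\z^2$ are not Arens regular.} From Step~2 we have $v(k,0)v(0,m)\asymp v(k,m)$, so $f(k,m):=v(k,0)\,v(0,m)\,[\,k>m\,]$ defines an element of $\ell^\infty(\z^2,v^{-1})=\ell^1(\z^2,v)^*$. The vectors $a_k=\delta_{(k,0)}/v(k,0)$ and $b_m=\delta_{(0,m)}/v(0,m)$ are unit vectors in $\ell^1(\z^2,v)$, and $a_kb_m=\delta_{(k,m)}/\big(v(k,0)v(0,m)\big)$, whence $\seq{f,a_kb_m}=[\,k>m\,]$. Therefore $\lim_m\lim_k\seq{f,a_kb_m}=1\neq0=\lim_k\lim_m\seq{f,a_kb_m}$: both iterated limits exist but differ, so the two Arens products on $\ell^1(\z^2,v)^{**}$ disagree and $\ell^1(\z^2,v)$ is not Arens regular. (This is exactly the classical non-Arens-regularity of $\ell^1(\z)$ in the sense of \cite{Young73}, transplanted along the ``mixed axis'' $\{(k,0)\}\cup\{(0,m)\}$ on which the weight behaves multiplicatively.)

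Combining the three steps, $A(G,\om_\alpha)$ has a non-Arens-regular quotient, hence is not Arens regular, hence is not isomorphic to an operator algebra. The only place requiring genuine work is Step~2: one must verify that for a prescribed character of the two-torus the minimizing representation of $G$ can be chosen with its two tensor components selected independently, up to the bounded multiplicative error coming from the finite central subgroup $A_1\cap A_2$ — equivalently, that one may pass cleanly between $G$ and its cover $A_1\times A_2$ inside the restriction map \eqref{eq-restriction}. Once $v$ is known to be equivalent to a product weight, Step~3 is purely formal.
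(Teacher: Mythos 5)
Your route is genuinely different from the paper's, and the outer reduction (Steps 1 and 3) is sound as far as it goes: restriction to a closed subgroup is a quotient map by \eqref{eq-restriction}, quotients of operator algebras by closed ideals are operator algebras \cite[Proposition 2.3.4]{BLe04} (so even a mere Banach-algebra isomorphism onto an operator algebra would force the quotient to be Arens regular), and your iterated-limit computation does show that $\ell^1(\z^2,v)$ fails Arens regularity once one knows $v(k,0)v(0,m)\le C\,v(k,m)$. The genuine gap is Step 2, which you yourself flag as ``the only place requiring genuine work'': the claim that the central-character matching on the finite group $A_1\cap A_2$ costs only a bounded multiplicative factor is precisely what is not proved, and it is not a routine correction. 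The obvious fix --- tensoring the unconstrained minimizer $\pi_2^0$ with a fixed representation to adjust its central character --- destroys the condition $\chi_m\subset\pi_2|_{\mathbb{T}_2}$, since the restriction to $\mathbb{T}_2$ gets translated by the weights of the correcting representation; and one cannot in general prescribe the central character freely among admissible $\pi_2$: for $A_2=SU(2)$ with $\mathbb{T}_2$ its maximal torus, every irreducible containing $\chi_m$ has central character $(-1)^m$ at $-I$, so no correction fixing $m$ is available. To close the gap one must either show that the matching condition is automatically consistent for those pairs $(k,m)$ which actually label characters of $H$ (this requires choosing the circles $\mathbb{T}_i$ carefully relative to $A_1\cap A_2$), or permit a bounded shift $m\mapsto m+j$ and prove that $g_2$ varies slowly under such shifts; neither argument appears in your write-up. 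You also need a separate (trivial) word for the rank-one case $G=\mathbb{T}$, where no $2$-torus exists but $A(\mathbb{T},\om_\alpha)=\ell^1(\z)$ is already non-Arens-regular.

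For comparison, the paper avoids all torus combinatorics by exploiting the one feature of dimension weights your argument never uses: $\om_\alpha\equiv 1$ on one-dimensional representations. By the structure theorem \cite[6.5.6]{P} invoked there, the hypothesis forces $G/G'$ to be an infinite compact connected abelian group; for $f\in C(G/G')\subset C(G)$ one has $\widehat{f}(\pi)=0$ whenever $d_\pi>1$, so the \emph{unweighted} algebra $\ell^1(\widehat{G/G'})$ sits isometrically as a closed subalgebra of $A(G,\om_\alpha)$, and non-Arens-regularity of $\ell^1$ of an infinite group \cite{Young73} finishes the proof in a few lines. Your approach, if Step 2 were completed, would have the merit of not passing through the abelianization (it restricts to a two-torus instead, in the spirit of Theorem \ref{thm-restriction-dim}), but as written it leaves the key product-weight equivalence unestablished.
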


\begin{proof}
Since $G$ is not simple,
by \cite[6.5.6]{P}, $G\cong (P \times T)/A$, where $P$ is a product of compact connected simple Lie groups, $T$
 is an infinite compact connected abelian group and $A$ is a central subgroup of $P\times T$. This, in particular, implies that $G'\neq G$, where $G'$ is the derived subgroup of $G$. Hence $G/G'$ is an infinite compact connected abelian group. On the other hand, since $G'$ is compact, we can view $C(G/G')$ as a subalgebra of $C(G)$. With this identification, a straightforward computation shows that, for every
 $f\in C(G/G')\subset C(G)$ and $\pi \in \widehat{G}$,
 $$\hat{f}(\pi)=0 \ \ \text{if} \ \ d_\pi >1.$$ Thus
 \begin{eqnarray*}
 \|f\|_{A(G,\om_\alpha)} &= & \sum_{\pi \in \widehat{G}} d_\pi^{\alpha +1} \|\hat{f}(\pi)\|_1 \\
 &=& \sum_{d_\pi=1} d_\pi^{\alpha+1} \|\hat{f}(\pi)\|_1 \\
 &=& \sum_{\chi \in \widehat{G/G'}} |\hat{f}(\chi)|.
 \end{eqnarray*}
Thus the commutative group algebra $\ell^1(\widehat{G/G'})$ is a closed subalgebra
of $A(G, \om_\alpha)$. Hence if $A(G, \om_\alpha)$ is isomorphic to an operator algebra, then so is
$\ell^1(\widehat{G/G'})$. However this is impossible because $\ell^1(\widehat{G/G'})$ is not
Arens regular \cite{Young73}, and so, $A(G,\om_\alpha)$ is not isomorphic to an operator algebra.
\end{proof}

\subsection{The case of $SU(n)$ with dimension weights}

The case of dimension weights turns out to be more delicate than that of polynomial weights. We thus restrict our attention to $SU(n)$. The following estimate is crucial for the positive result, and it concerns with the relative dimensions in the irreducible decomposition of the tensor product of two irreducible representations of $SU(n)$, which is of an independent interest.
	\begin{cond}\label{con}\label{con1}
	Let $\pi_\lambda, \pi_\mu, \pi_\nu \in \widehat{SU(n)}$ with $\pi_\nu \subset \pi_\lambda \otimes \pi_\mu$. There is a constant $C(n)$ depending only on $n$ such that
		$$\frac{d_\nu}{d_\lambda d_\mu} \le C(n) \left(\frac{1}{\lambda_1+1} + \frac{1}{\mu_1+1} \right).$$
	\end{cond}
	\begin{thm}\label{T:Conj 1- n between 2 and 5}
	Condition \ref{con1} holds for all $n\ge 2$.
	\end{thm}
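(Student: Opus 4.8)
The plan is to reduce Condition~\ref{con1} to a purely combinatorial statement about the dimension formula $d_\lambda=\prod_{1\le i<j\le n}\frac{\lambda_i-\lambda_j+j-i}{j-i}$ and about which $\pi_\nu$ can appear in $\pi_\lambda\otimes\pi_\mu$. First I would record the constraints coming from $\pi_\nu\subset\pi_\lambda\otimes\pi_\mu$: in terms of dominant weights (with $\lambda_n=\mu_n=\nu_n=0$ after normalizing), one has $\nu_1\le\lambda_1+\mu_1$ and, more usefully, each $\nu_i\le\lambda_1+\mu_1$, together with the interlacing/Littlewood--Richardson-type inequalities $\nu_i\ge\lambda_i$ (or $\ge\mu_i$) and $\nu_i\le\lambda_i+\mu_1$, etc.\ --- the precise form I need is just that $\nu_i-\nu_j\le(\lambda_i-\lambda_j)+(\mu_i-\mu_j)+O(1)$ termwise is \emph{not} quite right, so instead I would use the cruder but sufficient bounds $\nu_i\le\lambda_i+\mu_1$ and $\nu_i\le\mu_i+\lambda_1$ coming from the fact that the Young diagram of $\nu$ is obtained from that of $\lambda$ by adding at most $\mu_1$ boxes in each row. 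By symmetry in $\lambda\leftrightarrow\mu$ I may assume $\lambda_1\ge\mu_1$, so the target inequality becomes $\frac{d_\nu}{d_\lambda d_\mu}\le \frac{C(n)}{\mu_1+1}$.

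The core estimate is then to bound $d_\nu/(d_\lambda d_\mu)$ from above. Writing everything over the common normalizing constant $\prod_{i<j}(j-i)$, this is
$$
\frac{d_\nu}{d_\lambda d_\mu}
=\Big(\prod_{1\le i<j\le n}(j-i)\Big)\cdot
\prod_{1\le i<j\le n}\frac{\nu_i-\nu_j+j-i}{(\lambda_i-\lambda_j+j-i)(\mu_i-\mu_j+j-i)}.
$$
For each pair $(i,j)$ I would bound the numerator $\nu_i-\nu_j+j-i\le \nu_i+j-i\le \lambda_i+\mu_1+j-i \le (\lambda_i-\lambda_j+j-i)+\mu_1+\lambda_j$. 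Since $d_\mu\ge 1$ always, a first crude bound gives $d_\nu/(d_\lambda d_\mu)\le d_\nu/d_\lambda\le\prod_{i<j}\frac{\nu_i-\nu_j+j-i}{\lambda_i-\lambda_j+j-i}$; I would then show this product is $\le C(n)$ when $\mu_1\le$ a constant, and otherwise extract a factor $\frac{1}{\mu_1+1}$ by using $d_\mu\ge c_n\mu_1^{\,?}$-type lower bounds. Concretely, the clean route is: (i) from the proof of Theorem~\ref{thm-restriction-dim} we already have a lower bound of the form $d_\mu\ge c_n(\mu_1+1)^{n-1}\cdot(\text{something})$ --- actually the relevant elementary bound is $d_\mu\ge c_n\prod_{i<j}(\mu_i-\mu_j+j-i)/\prod(j-i)$ and in particular $d_\mu\ge \frac{1}{(n-1)!}(\mu_1+1)$ since the factors $\mu_i-\mu_j+j-i\ge 1$ and at least one factor, $\mu_1-\mu_n+n-1\ge\mu_1+1$; (ii) show $d_\nu\le C'(n)\,d_\lambda$ using the termwise numerator bound above together with $\nu_i-\nu_j+j-i\le(\lambda_i-\lambda_j+j-i)(1+\mu_1)$ for each pair, which yields $d_\nu\le (1+\mu_1)^{\binom n2}d_\lambda$. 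That last step is too lossy on its own, so the real work is a more careful pairing that tracks how many of the $\binom n2$ factors actually grow with $\mu_1$.

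The main obstacle, and where I would spend the bulk of the argument, is exactly this combinatorial bookkeeping: a naive termwise estimate loses a power $(1+\mu_1)^{\binom n2}$ in the numerator while $d_\mu$ only supplies $(1+\mu_1)^{n-1}$ in the denominator, so for $n\ge 3$ this does not close. The fix is to observe that the numerator $\prod_{i<j}(\nu_i-\nu_j+j-i)$ is comparable to $d_\lambda\cdot\prod_{i<j}(\mu_i-\mu_j+j-i)$ \emph{up to a bounded factor}, because $\nu$ lies in the ``sum region'' of $\lambda$ and $\mu$; making this precise (e.g.\ via the inequality $\nu_i-\nu_j+j-i\le (\lambda_i-\lambda_j+j-i)+(\mu_i-\mu_j+j-i)$, which does hold when $\nu$ is a legitimate summand since then $\nu_i\le\lambda_i+\mu_1$ and $\nu_j\ge\lambda_j$, hence $\nu_i-\nu_j\le\lambda_i-\lambda_j+\mu_1\le \lambda_i-\lambda_j+(\mu_i-\mu_j)+\mu_j\le\ldots$) and then expanding the product of binomials $\prod(a_{ij}+b_{ij})$ and comparing with $\prod a_{ij}\cdot\prod b_{ij}$ via the elementary fact $\prod(a_{ij}+b_{ij})\le 2^{\binom n2}\max\{\prod a_{ij},\prod b_{ij}\}\cdot(\text{lower order})$ is the delicate part. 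Ultimately one gets $d_\nu\le C(n)(d_\lambda d_\mu + d_\lambda\,\mu_1^{?}+\dots)$, and dividing by $d_\lambda d_\mu$ and using $d_\mu\ge c_n(\mu_1+1)$ (and symmetrically $d_\lambda\ge c_n(\lambda_1+1)$) produces the claimed $C(n)\big(\frac{1}{\lambda_1+1}+\frac{1}{\mu_1+1}\big)$. I expect the authors handle this by an explicit, somewhat intricate induction on $n$ or by a direct manipulation of the product formula; either way the Lie-theoretic input is minimal (only the dimension formula and the support of $\pi_\lambda\otimes\pi_\mu$), and the difficulty is entirely in the elementary inequality juggling.
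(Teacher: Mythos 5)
Your sketch does not close, and the route you propose is essentially the one the authors themselves say they could not push through for general $n$. Concretely, two of your key intermediate claims are false. First, the inequality $\nu_i-\nu_j+j-i\le(\lambda_i-\lambda_j+j-i)+(\mu_i-\mu_j+j-i)$ fails for legitimate constituents: in $SU(3)$ take $\lambda=\mu=(1,0,0)$ and $\nu=(1,1,0)\subset\pi_\lambda\otimes\pi_\mu$; for the pair $(i,j)=(2,3)$ the left side is $1+1=2$ while the right side is $1+1=2$ only because of the $j-i$ terms --- stripping those, $\nu_2-\nu_3=1$ exceeds $(\lambda_2-\lambda_3)+(\mu_2-\mu_3)=0$, and for larger highest weights the interior row gaps of $\nu$ can genuinely exceed the sum of the corresponding gaps of $\lambda$ and $\mu$, which is exactly what makes the problem hard. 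Second, the ``elementary fact'' $\prod_{i<j}(a_{ij}+b_{ij})\le 2^{\binom n2}\max\bigl\{\prod a_{ij},\prod b_{ij}\bigr\}\cdot(\text{lower order})$ is not true in any usable form (already $(1+N)(N+1)$ versus $4\max\{N,N\}$ fails); the cross terms in the expansion are precisely the uncontrolled part. Moreover, even if some comparability of $\prod(\nu_i-\nu_j+j-i)$ with $d_\lambda\prod(\mu_i-\mu_j+j-i)$ were available up to a bounded factor, that would only give boundedness of $d_\nu/(d_\lambda d_\mu)$, whereas Condition \ref{con1} demands the decay $d_\nu/(d_\lambda d_\mu)\lesssim (\mu_1+1)^{-1}$ (assuming $\lambda_1\ge\mu_1$); your only quantitative input in that direction, $d_\mu\ge(\mu_1+1)/(n-1)!$, is one power against the $(1+\mu_1)^{\binom n2}$ loss you yourself identify, and you do not supply the bookkeeping that would bridge this gap --- you explicitly defer it (``the delicate part'', ``I expect the authors handle this by\dots''), so as written this is a plan, not a proof.

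For comparison, the paper does not prove the general case by elementary manipulation of the Weyl dimension formula at all: Theorem \ref{T:Conj 1- n between 2 and 5} is obtained by citing \cite{CLS}, whose argument goes through Gelfand--Tsetlin patterns and a detailed analysis of the combinatorics of Young tableaux. The paper only gives an elementary argument of the kind you are attempting for $n=3$ (Appendix A), using the Littlewood--Richardson rule to parametrize the possible $\nu$ and then estimating the explicit ratio of dimension polynomials; the authors remark that these elementary techniques extend to $n=4,5$ with significantly more complicated estimates, but that they do not know how to extend them to $n\ge 6$. So to repair your proposal you would either have to reproduce the $n=3$-style case analysis (which does not scale as you hope) or invoke the machinery of \cite{CLS}; the purely ``inequality juggling'' path you outline is blocked at exactly the step you flagged as delicate.
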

\begin{proof}
See \cite{CLS} for the proof of the general case. The proof for $n =3$ is may be acheived through much more elementary means and will be presented in the appendix A.
\end{proof}
\begin{rem}{\rm
In a preliminary version of this paper the authors presented a proof of Condition \ref{con1} for the case $3\le n \le 5$ using an elementary estimate. Very recently, Condition \ref{con1} has been settled down affirmatively \cite{CLS} for all $n\ge 2$. But the method of the proof uses a deep understanding about the connection between Gelfand-Tsetlin patterns and the combinatorics of Young tableaux, which requires techniques which are very different from the ones already used in this paper, even for the simplest nontrivial case of $n=3$. We believe that the elementary proof of Condition \ref{con1} for $n=3$ in the appendix A will be helpful to the interested readers.  In fact, the techniques in Appendix A may be extended to the cases of $n=4, 5$.  However, the the complexity of the estimates increases significantly and we do not know how to extend these techniques to cases of $n\ge 6$.
}
\end{rem}

Now we consider the case of $SU(n)$ with the dimension weights.

	\begin{thm}\label{thm-SU(n)-dim}
	Let $\om_\alpha$ be the dimension weight of order $\alpha$ on $\widehat{SU(n)}$, $n\ge 2$ (Definition \ref{def-dimension-weight}). Then $A(SU(n), \om_\alpha)$ is completely isomorphic to an operator algebra if $\alpha>\frac{d(SU(n))}{2}=\frac{n^2-1}{2}$ and is not completely isomorphic to an operator algebra if $\alpha\le \frac{1}{2}$.
	\end{thm}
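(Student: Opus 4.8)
The plan is to treat the two halves — the positive result for $\alpha > \frac{n^2-1}{2}$ and the negative result for $\alpha \le \frac12$ — by completely separate arguments, mirroring the structure already used for polynomial weights in Theorems \ref{thm-general-poly} and \ref{thm-SU(n)-poly-negative}. For the negative direction I would simply invoke the restriction machinery: by Theorem \ref{thm-restriction-dim}, $A(\mathbb{T}^{n-1}, \rho_{(n-1)\alpha})$ is completely isomorphic to a complete quotient of $A(SU(n),\om_\alpha)$, and by Theorem \ref{thm-torus} the algebra $A(\mathbb{T}^{n-1}, \rho_{(n-1)\alpha})$ fails to be completely isomorphic to an operator algebra precisely when $(n-1)\alpha \le \frac{n-1}{2}$, i.e. when $\alpha \le \frac12$. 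Since a complete quotient of an operator algebra is again an operator algebra (\cite[Proposition 2.3.4]{BLe04}), this yields the negative statement immediately; no new computation is needed.

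For the positive direction I would follow the template of the proof of Theorem \ref{thm-general-poly} with $L^\infty = VN(SU(n))$. Set $W = \bigoplus_{\pi} \om_\alpha(\pi)\,{\rm id}_{M_{d_\pi}} = \bigoplus_\pi d_\pi^\alpha\,{\rm id}_{M_{d_\pi}}$ and $T = \Gamma(W)(W^{-1}\otimes W^{-1})$. By \eqref{eq-weight3}, the $(\pi_\lambda,\pi_\mu)$-component of $T$ is the block-diagonal operator with blocks $\frac{d_\nu^\alpha}{d_\lambda^\alpha d_\mu^\alpha}{\rm id}_{M_{d_\nu}}$ over $\pi_\nu \subset \pi_\lambda\otimes\pi_\mu$. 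The aim is, as before, to split $T = S(\tilde T_1 + \tilde T_2)$ with $S$ bounded central and $\tilde T_1 \in \mathcal T^2_r$, $\tilde T_2 \in \mathcal T^2_c$, then apply Corollary \ref{cor-moreopmod} and Theorem \ref{thm-multiplier} exactly as in the polynomial case. The key input is Theorem \ref{T:Conj 1- n between 2 and 5} (Condition \ref{con1}): for $\pi_\nu \subset \pi_\lambda\otimes\pi_\mu$ one has $\frac{d_\nu}{d_\lambda d_\mu} \le C(n)\bigl(\frac{1}{\lambda_1+1} + \frac{1}{\mu_1+1}\bigr)$. Raising this to the power $\alpha$ and using $(a+b)^\alpha \le 2^\alpha(a^\alpha+b^\alpha)$ gives
\begin{equation*}
\frac{d_\nu^\alpha}{d_\lambda^\alpha d_\mu^\alpha} \le (2C(n))^\alpha\left(\frac{1}{(\lambda_1+1)^\alpha} + \frac{1}{(\mu_1+1)^\alpha}\right),
\end{equation*}
so I would define $\tilde T_1(\pi_\lambda,\pi_\mu) = (\lambda_1+1)^{-\alpha}\,{\rm id}_{M_{d_\lambda}}\otimes{\rm id}_{M_{d_\mu}}$ and $\tilde T_2(\pi_\lambda,\pi_\mu) = (\mu_1+1)^{-\alpha}\,{\rm id}$, with $S$ the (bounded, central) pointwise ratio. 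These are $\tilde T_1 = \bigl(\bigoplus_\lambda (\lambda_1+1)^{-\alpha}{\rm id}_{M_{d_\lambda}}\bigr)\otimes 1$ and symmetrically for $\tilde T_2$, so membership in $\mathcal T^2_r$ resp. $\mathcal T^2_c$ reduces to the single summability condition
\begin{equation*}
\sum_{\pi_\lambda \in \widehat{SU(n)}} \frac{d_\lambda^2}{(\lambda_1+1)^{2\alpha}} < \infty.
\end{equation*}
Recalling $\tau_S(\pi_\lambda) = \norm{\pi_\lambda}_1 = \lambda_1$ and that $d_\lambda$ grows polynomially in $\lambda_1$ of degree $d(SU(n)) - \mathrm{rank} = \frac{n^2-1}{2}\cdot 2/\ldots$ — more precisely, the Weyl dimension formula bounds $d_\lambda$ by a polynomial in $\lambda_1$ whose degree, when summed against the number of $\lambda$ with fixed $\lambda_1$, makes the series behave like $\sum_m m^{-2\alpha + (d(SU(n))-1)}$ — this converges exactly when $\alpha > \frac{d(SU(n))}{2} = \frac{n^2-1}{2}$. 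This is the content of \cite[Lemma 5.6.7]{Wal73}, already cited in the proof of Theorem \ref{thm-general-poly}, and I would invoke it directly. Once the split is established, centrality of $\tilde T_2$ lets me write $\tilde\Gamma(A) = \Gamma(A)S\tilde T_1 + \tilde T_2\Gamma(A)S$, and both terms are completely bounded into $VN(SU(n))\otimes_{eh}VN(SU(n))$ by the Littlewood multiplier theorem, so $\tilde\Gamma$ is completely bounded and $A(SU(n),\om_\alpha)$ is completely isomorphic to an operator algebra by Blecher's criterion \eqref{eq-Op-alg}.

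The main obstacle is entirely contained in Condition \ref{con1} / Theorem \ref{T:Conj 1- n between 2 and 5}: without the bound $\frac{d_\nu}{d_\lambda d_\mu} \lesssim \frac{1}{\lambda_1+1} + \frac{1}{\mu_1+1}$ one cannot perform the row/column splitting that feeds into the Littlewood machinery, and this estimate is genuinely nontrivial — it is the place where the combinatorics of semistandard Young tableaux and the Weyl dimension formula for $SU(n)$ enter in a way that resists a uniform elementary treatment for large $n$ (hence the appeal to \cite{CLS}). Everything after that — the power-$\alpha$ manipulation, the identification of $\tilde T_i$ as simple tensors, the summability check via \cite{Wal73}, and the appeal to Corollary \ref{cor-moreopmod} and Theorem \ref{thm-multiplier} — is routine and parallels the polynomial-weight proof verbatim. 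I would also remark, as the authors do for the polynomial case, that the resulting algebra is not completely isomorphic to a $Q$-space, since restricting to $H_n$ and using the dimension-weight restriction (Theorem \ref{thm-restriction-dim}) exhibits row Hilbert spaces of unbounded dimension inside $A(SU(n),\om_\alpha)$; but that belongs to the surrounding remarks rather than to this theorem.
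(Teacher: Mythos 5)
Your proposal is correct and follows essentially the same route as the paper: the positive half combines Condition \ref{con1} (Theorem \ref{T:Conj 1- n between 2 and 5}) with the row/column Littlewood splitting and the summability criterion of \cite[Lemma 5.6.7]{Wal73}, exactly as in the proof of Theorem \ref{thm-general-poly}, while the negative half restricts to the maximal torus via Theorems \ref{thm-restriction-dim} and \ref{thm-torus} and the quotient stability of operator algebras. Apart from one slightly garbled aside about the degree of growth of $d_\lambda$ (harmless, since you then invoke Wallach's lemma directly), this matches the paper's argument.
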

	
\begin{proof}
For the positive direction, since by Theorem \ref{T:Conj 1- n between 2 and 5}, Condition 1 holds, we have
	\begin{align*}
	\frac{\om_\alpha(\pi_\nu)}{\om_\alpha(\pi_\lambda)\om_\alpha(\pi_\mu)}
	& \le (2C(n))^\alpha\left(\frac{1}{(\lambda_1 + 1)^\alpha} + \frac{1}{(\mu_1 + 1)^\alpha}\right)\\
	& = (2C(n))^\alpha\left(\frac{1}{(1+\norm{\pi_\lambda}_1)^\alpha} + \frac{1}{(1+\norm{\pi_\mu}_1)^\alpha}\right)
	\end{align*}
for any $\pi_\nu \subset \pi_\lambda\otimes \pi_\mu$.
The rest of the argument goes exactly the same as the one presented in the proof of Theorem \ref{thm-general-poly}.
Negative results follow from Theorem \ref{thm-restriction-dim}, Theorem \ref{thm-torus} and the fact that
a complete quotient of an operator algebra is again an operator algebra \cite[Proposition 2.3.4]{BLe04}.
\end{proof}

The following is an interesting consequence of the preceding theorem.
\begin{cor}\label{C:op. subalgebra of Fourier alg}
Let $k,n\in \mathbb{N}$ with $\frac{n^2-1}{2} < 2^k$, and let $G_k=SU(n)\times \cdots \times SU(n)$, $2^k$-times. Then
$A (SU(n), \om_{2^k})$ is a unital, infinite-dimensional closed subalgebra of the Fourier algebra $A(G_k)$
which is completely isomorphic to an operator algebra.
\end{cor}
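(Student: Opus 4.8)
The plan is to combine the embedding result of \cite{FSS1} recalled in the introduction with Theorem \ref{thm-SU(n)-dim}. For a compact group $G$ and $k\in\mathbb{N}$, \cite{FSS1} produces a \emph{unital, completely bounded} (in fact completely isometric) algebra homomorphism
\[
\Phi : A(G,\om_{2^k}) \longrightarrow A(\underbrace{G\times\cdots\times G}_{2^k}),
\]
for instance by iterating $k$ times the doubling embedding $A(H,\om_1)\hookrightarrow A(H\times H)$ (applied to $H=G,\,G^2,\dots,G^{2^{k-1}}$), using at each step that the dimension weight of order $2^j$ on the relevant group corresponds, after one doubling, to the dimension weight of order $2^{j-1}$. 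Taking $G=SU(n)$ realizes $A(SU(n),\om_{2^k})$ as a unital subalgebra of $A(G_k)$; since $\Phi$ is bounded below, its range is automatically closed.

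First I would check the elementary side conditions. Because $SU(n)$ is compact, the constant function $1$ satisfies $\widehat{1}(\pi)=0$ for $\pi$ nontrivial, so $\norm{1}_{A(SU(n),\om_{2^k})}=1$ and $1$ is the identity of $A(SU(n),\om_{2^k})$; as $\Phi$ is unital, the image $\Phi(A(SU(n),\om_{2^k}))$ contains the identity of $A(G_k)$, hence is a \emph{unital} subalgebra. It is infinite dimensional because $\widehat{SU(n)}$ is infinite and the matrix coefficients of the various $\pi\in\widehat{SU(n)}$ lie in $A(SU(n),\om_{2^k})$ and span an infinite-dimensional subspace, which $\Phi$ maps isomorphically into $A(G_k)$.

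Finally, the hypothesis $\frac{n^2-1}{2}<2^k$ is precisely the inequality $2^k>\frac{d(SU(n))}{2}$, so Theorem \ref{thm-SU(n)-dim} applies with $\alpha=2^k$ and gives that $A(SU(n),\om_{2^k})$ is completely isomorphic to an operator algebra. This is an intrinsic property of the completely contractive Banach algebra $A(SU(n),\om_{2^k})$, unaffected by the particular embedding $\Phi$; hence the closed subalgebra $\Phi(A(SU(n),\om_{2^k}))\subseteq A(G_k)$ is completely isomorphic to an operator algebra, which is the claim. There is no substantive obstacle here---the result is a direct synthesis---apart from verifying that the embedding of \cite{FSS1} is simultaneously \emph{unital}, an \emph{algebra homomorphism}, and \emph{completely bounded with closed range}, so that all four asserted properties transfer at once.
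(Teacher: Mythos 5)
Your overall strategy coincides with the paper's: realize $A(SU(n),\om_{2^k})$ as a unital, infinite-dimensional closed subalgebra of $A(G_k)$ via the known embedding results (the paper invokes the argument of \cite[Theorem 4.4]{LS}, you invoke \cite{FSS1}), and then apply Theorem \ref{thm-SU(n)-dim} with $\alpha=2^k$, for which the hypothesis $\frac{n^2-1}{2}<2^k$ is exactly what is needed. Your verifications of unitality and infinite-dimensionality, and your remark that being completely isomorphic to an operator algebra is an intrinsic property that transfers to a completely isomorphic copy, are fine.

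The gap is in the one step you made explicit, namely the weight bookkeeping of the iterated doubling. Under the paper's normalization $\norm{f}_{A(G,\om_\alpha)}=\sum_\pi d_\pi^{1+\alpha}\norm{\widehat f(\pi)}_1$, the doubling map $f\mapsto f\circ m_2$ with $m_2(x,y)=xy$ sends the coefficient function $f=\mathrm{Tr}(A\pi(\cdot))$ to a coefficient of $\pi\boxtimes\pi$ with
\begin{equation*}
d_{\pi\boxtimes\pi}\,\bigl\Vert\widehat{f\circ m_2}(\pi\boxtimes\pi)\bigr\Vert_1=d_\pi^2\,\norm{\widehat f(\pi)}_1 ,
\end{equation*}
so it is an isometry of $A(H,\om_{2\beta+1})$ onto a closed subalgebra of $A(H\times H,\om_\beta)$: the order of the dimension weight transforms as $\alpha\mapsto(\alpha-1)/2$, not $\alpha\mapsto\alpha/2$ as you assert ("order $2^j$ corresponds to order $2^{j-1}$"). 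Iterating, or equivalently using $f\mapsto f\circ m_N$ directly, one finds $\norm{f\circ m_N}_{A(G^N)}=\sum_\pi d_\pi^{N}\norm{\widehat f(\pi)}_1=\norm{f}_{A(G,\om_{N-1})}$; hence the closed subalgebra of $A(G_k)$, $G_k=SU(n)^{2^k}$, produced by this construction is a copy of $A(SU(n),\om_{2^k-1})$, while on $A(SU(n),\om_{2^k})$ your map $\Phi$ is merely a contractive injective homomorphism: the norm ratio on the $\pi$-isotypic part is $d_\pi$, which is unbounded on $\widehat{SU(n)}$, so $\Phi$ is not bounded below and its range is not closed. Consequently the sentence "since $\Phi$ is bounded below, its range is automatically closed" rests on a false premise for the construction you describe. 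To repair your argument you must either work with $\om_{2^k-1}$ (and then strengthen the hypothesis to $\frac{n^2-1}{2}<2^k-1$ so that Theorem \ref{thm-SU(n)-dim} still applies), or use one more factor, $f\mapsto f\circ m_{2^k+1}$ into $A(SU(n)^{2^k+1})$, to hit the weight $\om_{2^k}$ exactly; in either corrected form the rest of your argument goes through. This off-by-one arithmetic is precisely the content hidden in the appeal to \cite[Theorem 4.4]{LS}, so it has to be carried out carefully rather than asserted.
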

\begin{proof}
A similar argument to the one given in the proof of \cite[Theorem 4.4]{LS} shows that
$A (SU(n), \om_{2^k})$ is a unital, infinite-dimensional closed subalgebra of
$A(G_k)$. Now the result is a direct concequence of Theorem \ref{thm-SU(n)-dim}.
\end{proof}

\subsection{The case of compact connected Lie groups with exponential weights}\label{S:Exp. weight-poly growth}

Let $G$ be a compact connected Lie group. For $0\leq \alpha \leq 1$, we recall the exponential weight $\gamma_\alpha = \gamma^\alpha_S$ in Definition \ref{def-poly-weight}.
In this section, we will study when the Beurling--Fourier algebra $A(G,\gamma_\alpha)$ is completely isomorphic to an operator algebra.
If we want to apply the same approach as before we need to find an appropriate decomposition of the function
	$$\frac{\gamma_\alpha(\sigma)}{\gamma_\alpha(\pi)\gamma_\alpha(\pi')}$$
for any $\pi, \pi', \sigma \in \widehat{G}$ with $\sigma \subset \pi\otimes \pi'$.
However, the lack of subadditivity of the function $e^{\tau_S(\pi)^\alpha}$ makes the problem more complicated.
Instead, we use the following estimate.

\begin{prop}\label{prop-exp-domination}
Let $0< \alpha < 1$ and $\beta \ge \max\{1, \frac{6}{\alpha(1-\alpha)}\}$. There is a constant $M$ depending only on $\alpha$, $\beta$, and the group $G$ such that
	$$\frac{\gamma_\alpha(\sigma)}{\gamma_\alpha(\pi)\gamma_\alpha(\pi')}
	\le M^2\frac{\om_\beta(\sigma)}{\om_\beta(\pi)\om_\beta(\pi')}$$
for any $\pi, \pi', \sigma \in \widehat{G}$ with $\sigma \subset \pi\otimes \pi'$.
\end{prop}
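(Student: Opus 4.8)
The plan is to reduce everything to a single-variable calculus inequality by exploiting the equivalence \eqref{eq2} of $\tau_S$ with the $1$-norm and the subadditivity \eqref{eq3}. Write $a=\tau_S(\pi)$, $b=\tau_S(\pi')$, $c=\tau_S(\sigma)$; since $\sigma\subset\pi\otimes\pi'$ we have $c\le a+b$. Passing to the dimension weight side, one has $\om_\beta(\pi)=d_\pi^\beta$, and I would use the standard bound $d_\pi \ge D_0^{-1}(1+\tau_S(\pi))$ from below (and the polynomial upper bound $d_\pi \le D_1 (1+\tau_S(\pi))^{d(G)}$, or rather just whatever is needed), so that controlling $\frac{\om_\beta(\sigma)}{\om_\beta(\pi)\om_\beta(\pi')}$ from below amounts to controlling $\left(\frac{1+c}{(1+a)(1+b)}\right)^{\pm\beta}$-type quantities. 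After absorbing all these dimension-versus-length comparisons into the single constant $M$, the claim becomes: there is $M$ with
\[
e^{c^\alpha-a^\alpha-b^\alpha}\ \le\ M^2\left(\frac{1+c}{(1+a)(1+b)}\right)^{\beta}
\]
whenever $0\le c\le a+b$ (with suitable adjustments when $a,b,c$ are small, handled by fattening $M$). Taking logarithms, it suffices to show
\[
c^\alpha-a^\alpha-b^\alpha\ \le\ \beta\log(1+c)-\beta\log(1+a)-\beta\log(1+b)+2\log M,
\]
and since $c\le a+b$ and $t\mapsto t^\alpha$ is increasing, $c^\alpha\le (a+b)^\alpha$; meanwhile $\log(1+c)\ge 0$, so it is enough to prove the two-variable inequality $(a+b)^\alpha - a^\alpha - b^\alpha \le -\beta\log(1+a)-\beta\log(1+b)+2\log M$, i.e.
\[
\big[(a+b)^\alpha-a^\alpha\big] + \big[\beta\log(1+b)\big] \le \big[ b^\alpha\big]+2\log M - \beta\log(1+a)+\text{(symmetric terms)}.
\]

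The cleanest route is to symmetrize: it suffices to find $M$ so that for each single variable $t\ge 0$,
\[
\tfrac12(a+b)^\alpha \ \text{is controlled}\ \le\ b^\alpha + \beta\log(1+a) + \log M
\]
and the symmetric statement with $a,b$ swapped; adding the two recovers the desired bound. By concavity $(a+b)^\alpha\le a^\alpha+b^\alpha$, so $\tfrac12(a+b)^\alpha-b^\alpha \le \tfrac12 a^\alpha$ on the relevant range, and I am reduced to the genuinely one-dimensional claim: there is $M$ with $\tfrac12 a^\alpha \le \beta\log(1+a)+\log M$ for all $a\ge 0$. This is false as stated (the left side eventually wins), so the actual argument must keep the $\log(1+c)$ term on the right rather than discarding it, and this is the crux. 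Retaining it, and using $c\le a+b$ together with the case analysis "$a,b$ comparable" versus "$a\gg b$", one shows $c^\alpha-a^\alpha-b^\alpha$ is bounded above while $\beta[\log(1+c)-\log(1+a)-\log(1+b)]$ is either bounded below by a constant (when one variable dominates) or tends to $-\infty$ slowly enough to be beaten — and here the hypothesis $\beta\ge \frac{6}{\alpha(1-\alpha)}$ enters to make the elementary estimate $x^\alpha - y^\alpha \le \alpha(1-\alpha)^{-1}\,(\text{something})\cdot$log-type comparison go through.

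The main obstacle, and the step I would spend the most care on, is the regime where one of $a,b$ is much larger than the other, say $b\le a$ with $b$ fixed and $a\to\infty$: there $c$ can be as large as $a+b$, so $c^\alpha - a^\alpha = (a+b)^\alpha-a^\alpha \sim \alpha b a^{\alpha-1}\to 0$, which is harmless, while $\log(1+c)-\log(1+a)\to \log\frac{a+b}{a}\to 0$ too — so the delicate comparison is actually in the regime $a\asymp b\to\infty$ and $c\asymp a$, where I would write $c=\theta(a+b)$ with $\theta\in(0,1]$ and check the inequality $\theta^\alpha(a+b)^\alpha - a^\alpha - b^\alpha \le \beta\big(\log(1+\theta(a+b)) - \log(1+a)-\log(1+b)\big) + 2\log M$ uniformly in $\theta$; as $a=b=s\to\infty$ this reads $(2^\alpha\theta^\alpha - 2)s^\alpha \le \beta(\log(1+2\theta s) - 2\log(1+s)) + 2\log M \sim -\beta\log s + \beta\log(2\theta) + 2\log M$, and since $2^\alpha\theta^\alpha-2<0$ the left side $\to-\infty$ polynomially faster than the right, so it holds for large $s$; the finitely many remaining $(a,b)$ and the boundary corrections are absorbed into $M$ by compactness. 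Thus the proposition follows, with $M$ depending only on $\alpha$, $\beta$, and (through the constants in \eqref{eq2} and the dimension bounds) on $G$.
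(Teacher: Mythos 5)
Your reduction does land on the correct scalar inequality: with $a=\tau_S(\pi)$, $b=\tau_S(\pi')$, $c=\tau_S(\sigma)\le a+b$, everything comes down to $e^{c^\alpha-a^\alpha-b^\alpha}\le M^2\bigl(\tfrac{1+c}{(1+a)(1+b)}\bigr)^{\beta}$, i.e.\ to the uniform bound $\sup\{p(c)-p(a)-p(b):0\le c\le a+b\}<\infty$ for $p(x)=x^\alpha-\beta\log(1+x)$. Two caveats before the main point. First, in this proposition $\om_\beta$ is the polynomial weight $(1+\tau_S(\cdot))^\beta$ (this is how it is written in Appendix \ref{Appendix-exp-poly weight} and used in Theorem \ref{T:Expo weight-operator alg}), so no dimension-versus-length comparison is needed; your proposed absorption of such comparisons into a constant would in any case be false, since $d_\pi$ is not two-sidedly comparable to $1+\tau_S(\pi)$ (the lower bound fails completely on a central torus, and even for semisimple $G$ the comparison is only polynomial, not bounded). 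Second, and this is the genuine gap: you never prove the scalar inequality. You check asymptotics only along two one-parameter regimes ($a=b=s\to\infty$ with $c=\theta(a+b)$, and $b$ fixed, $a\to\infty$, $c=a+b$) and then propose to absorb ``the finitely many remaining $(a,b)$'' by compactness — but the remaining region is unbounded. Take $b\to\infty$ with $b=o(a)$ and $c=a+b$: then $\beta[\log(1+a)+\log(1+b)-\log(1+c)]\approx\beta\log(1+b)\to\infty$, so your dichotomy (``the log term is bounded below by a constant when one variable dominates'') is false, and one must actually produce a quantitative estimate such as $a^\alpha+b^\alpha-(a+b)^\alpha\ge(1-\alpha)b^\alpha$ for $b\le a$ to beat the logarithmic deficit; nothing in your sketch supplies this, and the hypothesis $\beta\ge 6/(\alpha(1-\alpha))$, which you invoke only in passing, never enters your argument concretely.

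For comparison, the paper proves the uniform bound by a different mechanism: by Lemma \ref{L:estimate poly-expo weight} (quoted from \cite{LSS}), the hypothesis on $\beta$ guarantees that $p$ is increasing and $q(x)=p(x)/x$ is decreasing on $[K,\infty)$, $K=(\beta^2/\alpha(1-\alpha))^{1/\alpha}$; hence for $a,b,c\ge K$ one gets, with no constant at all, $p(c)\le p(a+b)=a\,q(a+b)+b\,q(a+b)\le a\,q(a)+b\,q(b)=p(a)+p(b)$, and the configurations involving small parameters are handled by the bounded corrections $e^{p(K)-p(\tau(\pi'))}\le M$ and the finite maximum $M$ over $[0,2K]$ (the five cases of Lemma \ref{L:Expo weight-bounded-Poly weight}). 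Your plan is repairable along the elementary lines indicated above (treat $c$ via $\sup_{c\le a+b}p(c)\le\max\{p(a+b),\sup p\}$ and use the displayed estimate for $b\le a$), but as written it does not establish the proposition.
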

\begin{proof}
We present the proof in the appendix \ref{Appendix-exp-poly weight}.
\end{proof}

Now we have the results for exponential weights.
Note that in the case of exponential weights we have a better understanding of the negative results.

\begin{thm}\label{T:Expo weight-operator alg}
Let $G$ be a compact connected Lie group and $0< \alpha \leq 1$. Then:\\
$(i)$ The Beurling--Fourier algebra $A(G,\gamma_\alpha)$ is completely isomorphic to an operator algebra if $0<\alpha<1$.\\
$(ii)$ If $G$ is not simple, then $A(G,\gamma_1)$ is not isomorphic to an operator algebra.\\
$(iii)$ If $G=SU(n)$, then $A(G,\gamma_1)$ is not isomorphic to an operator algebra.
\end{thm}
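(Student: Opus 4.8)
The plan is to treat the three parts separately, exploiting the machinery already assembled. For part $(i)$, the idea is to reduce the exponential weight $\gamma_\alpha$ with $0<\alpha<1$ to a \emph{polynomial} weight via the domination estimate of Proposition \ref{prop-exp-domination}. Concretely, fix $\beta \ge \max\{1, \tfrac{6}{\alpha(1-\alpha)}\}$, and let $W_\gamma$ and $W_\beta$ be the operators in $VN(G)$ associated to $\gamma_\alpha$ and $\om_\beta$ respectively. Proposition \ref{prop-exp-domination} says precisely that
\begin{equation*}
\Gamma(W_\gamma)(W_\gamma^{-1}\otimes W_\gamma^{-1}) \le M^2\, \Gamma(W_\beta)(W_\beta^{-1}\otimes W_\beta^{-1})
\end{equation*}
as positive elements of $VN(G)\vnt VN(G)$ (reading the pointwise inequality through \eqref{eq-weight3}). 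Now in the proof of Theorem \ref{thm-SU(n)-dim} (via Theorem \ref{T:Conj 1- n between 2 and 5} and the argument of Theorem \ref{thm-general-poly}), one already shows that for $\beta$ large enough the element $\Gamma(W_\beta)(W_\beta^{-1}\otimes W_\beta^{-1})$ splits as $S(\tilde T_1 + \tilde T_2)$ with $S$ bounded central and $\tilde T_1 \in \mathcal{T}^2_r$, $\tilde T_2 \in \mathcal{T}^2_c$. Since any $\beta$ above the stated threshold works in Proposition \ref{prop-exp-domination} — in particular one can take $\beta > \frac{d(G)}{2}$ — and since a dominating positive multiplier of the form required by the Littlewood machinery stays in $\mathcal{T}^2_r + \mathcal{T}^2_c$ once we absorb the comparison constant $M^2$ into the bounded central factor, the map $\widetilde\Gamma: A \mapsto \Gamma(A)\,\Gamma(W_\gamma)(W_\gamma^{-1}\otimes W_\gamma^{-1})$ factors as $\Gamma(A)S'\tilde T_1 + \tilde T_2'\,\Gamma(A)S'$, which is completely bounded by Corollary \ref{cor-moreopmod} and Theorem \ref{thm-multiplier}. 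Hence $A(G,\gamma_\alpha)$ is completely isomorphic to an operator algebra by Blecher's theorem and \eqref{eq-Op-alg}.

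For part $(ii)$, the plan is to repeat verbatim the restriction argument of Theorem \ref{T:dim weight-non simple-non operator alg}. If $G$ is not simple then $G/G'$ is an infinite compact connected abelian group, and for $f \in C(G/G') \subset C(G)$ one has $\widehat f(\pi)=0$ whenever $d_\pi>1$; consequently $\|f\|_{A(G,\gamma_1)} = \sum_{\chi\in\widehat{G/G'}} |\widehat f(\chi)|$ since $\gamma_1(\chi)=e^{\tau_S(\chi)}$ contributes to the norm but the one–dimensional characters still sit isometrically inside. Wait — the point here is subtler than in the dimension–weight case, because $\gamma_1$ is \emph{not} identically $1$ on the characters. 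So rather than getting $\ell^1(\widehat{G/G'})$ itself, one gets $\ell^1(\widehat{G/G'},\gamma_1|_{\widehat{G/G'}})$ as a closed subalgebra. The honest route is therefore to pick a single circle subgroup $T_0 \subset G$ lying in $G$ with $\gamma_1$ restricting to (a weight equivalent to) $e^{|\cdot|}$ on $\mathbb Z = \widehat{T_0}$ — precisely the kind of computation carried out in Theorem \ref{thm-restriction-exp} — so that by \eqref{eq-restriction} the algebra $\ell^1(\mathbb Z, e^{|\cdot|})$ is a complete quotient of $A(G,\gamma_1)$. Since $\ell^1(\mathbb Z, e^{|\cdot|})$ is not Arens regular (the exponential weight $e^{|\cdot|}$ does not force Arens regularity of $\ell^1(\mathbb Z)$, as $e^{|n+m|}/(e^{|n|}e^{|m|})$ fails to vanish at infinity appropriately), while operator algebras are always Arens regular and Arens regularity passes to quotients, $A(G,\gamma_1)$ cannot be isomorphic to an operator algebra.

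For part $(iii)$, with $G=SU(n)$, the plan is the same as $(ii)$ but now Theorem \ref{thm-restriction-exp} supplies exactly what is needed: restricting $\gamma^1_S$ to the one–dimensional torus $T$ in $H_n$ whose entries are all $1$ except the first two gives \emph{precisely} the weight $e^{|\cdot|}$ on $\mathbb Z$, and $\ell^1(\mathbb Z, e^{|\cdot|})$ is a complete quotient of $A(SU(n),\gamma^1_S)$. So the failure to be an operator algebra again follows from the failure of Arens regularity of $\ell^1(\mathbb Z, e^{|\cdot|})$ together with stability of Arens regularity under quotients. The main obstacle in the whole theorem is really all contained in part $(i)$, and specifically in Proposition \ref{prop-exp-domination}: the lack of subadditivity of $t \mapsto t^\alpha$ composed with the exponential means one cannot directly split $\gamma_\alpha(\sigma)/(\gamma_\alpha(\pi)\gamma_\alpha(\pi'))$ into a row part plus a column part, and the technical heart — deferred to the appendix — is the inequality that trades the exponential weight for a sufficiently high–order polynomial weight, losing only a multiplicative constant; once that trade is made, everything reduces to the polynomial case already handled in Theorem \ref{thm-general-poly}. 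The negative parts $(ii)$ and $(iii)$ are comparatively soft, the only delicate point being to locate a subgroup on which the weight restricts to a genuinely non–Arens–regular Beurling algebra, which Theorem \ref{thm-restriction-exp} does for $SU(n)$.
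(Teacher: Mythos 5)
Parts (i) and (iii) of your plan are essentially the paper's own proofs. For (i), the paper does exactly what you describe: dominate $\gamma_\alpha(\sigma)/(\gamma_\alpha(\pi)\gamma_\alpha(\pi'))$ via Proposition \ref{prop-exp-domination} and then run the row-plus-column Littlewood splitting for a sufficiently large order. One small correction: the $\om_\beta$ in Proposition \ref{prop-exp-domination} is the \emph{polynomial} weight $\om^\beta_S(\pi)=(1+\tau_S(\pi))^\beta$ (as its appendix proof makes clear), so the splitting you need is the one from Theorem \ref{thm-general-poly}; your appeal to Theorem \ref{thm-SU(n)-dim} and Condition \ref{con1} is out of place, since those concern the dimension weight and are specific to $SU(n)$ --- for a general compact connected Lie group (e.g.\ a torus, where $\om_\beta\equiv 1$ as a dimension weight) that route would not even make sense. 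Part (iii) is verbatim the paper's argument: Theorem \ref{thm-restriction-exp} exhibits $\ell^1(\mathbb{Z},e^{|\cdot|})$ as a complete quotient of $A(SU(n),\gamma_1)$, and non-Arens-regularity of $\ell^1(\mathbb{Z},e^{|\cdot|})$ (the paper cites \cite[Theorem 8.11]{DL} rather than your informal cluster remark) finishes, since quotients of operator algebras are operator algebras.

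Part (ii) is where you genuinely deviate, and there is a gap. You correctly note that the argument of Theorem \ref{T:dim weight-non simple-non operator alg} produces the closed subalgebra $\ell^1\bigl(\widehat{G/G'},\gamma_1|_{\widehat{G/G'}}\bigr)$ rather than $\ell^1(\widehat{G/G'})$, but you then abandon that route in favour of restricting to a circle subgroup $T_0\subset G$ and asserting that $(\gamma_1)_{T_0}$ is (equivalent to) $e^{|\cdot|}$ on $\mathbb{Z}$. For a general non-simple compact connected Lie group this is precisely the delicate point: the restricted weight \eqref{eq-restriction-defn} is an infimum over \emph{all} $\tilde\pi\in\widehat{G}$ whose restriction contains the given character, the paper explicitly warns before Theorem \ref{thm-restriction-exp} that for exponential weights restriction need not yield equivalence, and Theorem \ref{thm-restriction-exp} itself is an exact $SU(n)$ computation via semistandard Young tableaux that does not transfer for free. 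Even if you established two-sided bounds $e^{c_1|m|}\le (\gamma_1)_{T_0}(m)\le e^{c_2|m|}$ with $c_1<c_2$, that is not equivalence of weights, and you would still owe an argument that $\ell^1(\mathbb{Z},(\gamma_1)_{T_0})$ fails Arens regularity, which the cited result for $e^{|\cdot|}$ does not literally provide. The paper's (ii) avoids all of this by finishing the route you started: inside $\ell^1(\widehat{G/G'},\gamma_1|_{\widehat{G/G'}})$ (a closed subalgebra of $A(G,\gamma_1)$, since $G/G'$ is an infinite compact connected abelian group), one uses a copy of $\mathbb{Z}$ in $\widehat{G/G'}$ and the definition of $\tau_S$ to see that $\ell^1(\mathbb{Z},e^{|\cdot|})$ embeds as a closed subalgebra, and then non-Arens-regularity together with the fact that operator algebras and their closed subalgebras are Arens regular concludes. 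To repair your version, either carry out the weight computation for your chosen circle (and the corresponding Arens-regularity statement), or simply complete the subalgebra argument you had already set up.
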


\begin{proof}
First assume that $0<\alpha<1$ and take $\beta \geq \displaystyle \max\left \{1, \frac{6}{\alpha(1-\alpha)} \right \}$. Then,
by Proposition \ref{prop-exp-domination}, we have
	$$\frac{\gamma_\alpha(\sigma)}{\gamma_\alpha(\pi)\gamma_\alpha(\pi')}
	\le \frac{M^2 \om_\beta(\sigma)}{\om_\beta(\pi)\om_\beta(\pi')}
	\le M^2 \left(\frac{2}{(1+C)}\right)^\beta\left(\frac{1}{(1+\norm{\pi}_1)^\beta}
	+\frac{1}{(1+\norm{\pi'}_1)^\beta}\right).$$
If we take $\beta$ large enough, then by a similar argument to the one presented in the proof of Theorem \ref{thm-general-poly}, we can conclude that $A(G,\gamma_\alpha)$ is completely isomorphic to an operator algebra.
This proves (i).

For part (ii), similar to the proof of Theorem \ref{T:dim weight-non simple-non operator alg}, we can show that the commutative Beurling algebra
$\ell^1(\widehat{G/G'}, \tilde{\gamma_\alpha})$ is a closed subalgebra of $A(G, \gamma_1)$, where $\tilde{\gamma_\alpha} = (\gamma_\alpha)|_{\widehat{G/G'}}$.
Hence if $A(G, \gamma_1)$ is isomorphic to an operator algebra, then so is $\ell^1(\widehat{G/G'}, \tilde{\gamma_\alpha})$. However it follows routinely from the definition of $\tau_S$ (preceding to Definition \ref{def-poly-weight}) and the fact that $\widehat{G/G'}$ has a copy of $\z$ that $\ell^1(\z, e^{|\cdot|})$ is a closed subalgebra of $\ell^1(\widehat{G/G'}, \tilde{\gamma_\alpha})$. But $\ell^1(\z, e^{|\cdot|})$ is not Arens regular by \cite[Theorem 8.11]{DL}, and so, it can not be isomorphic to an operator algebra. Therefore $\ell^1(\widehat{G/G'}, \tilde{\gamma_\alpha})$ can not be isomorphic to an operator algebra. This completes the proof of (ii).

Finally, if $G=SU(n)$, then by Theorem \ref{thm-restriction-exp} we have that $\ell^1(\z, e^{|\cdot|})$ is a complete quotient of $A(SU(n), \gamma_1)$. Hence $A(SU(n), \gamma_1)$ is not isomorphic to an operator algebra since quotients of operator algebras are again operator algebras (see \cite[Proposition 2.3.4]{BLe04} for example).
\end{proof}

\begin{rem}
We note that when $\alpha =0$ we have $A(G,\gamma_0)=A(G)$, which is not Arens regular by \cite{F1}. Hence it can not be isomorphic to an operator algebra.
\end{rem}

\section*{Acknowledgements}
The authors would like to express their thanks to the referee for his/her valuable and kind comments, especially for pointing out Corollary \ref{cor-Varopoulos-extended} could be answered.

\appendix

\section{An elementary solution of Condition \ref{con1} for $n=3$}

Let $\pi_\lambda, \pi_\mu \in \widehat{SU(3)}$ with $\lambda = (\lambda_1, \lambda_2, \lambda_3 = 0)$ and $\mu = (\mu_1, \mu_2, \mu_3 = 0)$. Let $\pi_\nu \in \widehat{SU(3)}$ with $\nu = (\nu_1, \nu_2, \nu_3)$ and $\pi_\nu \subset \pi_\lambda \otimes \pi_\mu$.

We will introduce the following notations for simplicity.
	\begin{equation*}\label{eq-notation}
	\begin{cases}
	\lambda_{ij} & := \lambda_i - \lambda_j,\\
	\lambda\mu_i & := \lambda_i + \mu_i,\\
	\lambda\mu_{ij} & := \lambda_i - \lambda_j + \mu_i - \mu_j.
	\end{cases}
	\end{equation*}
The Littlewood-Richardson rule (\cite[(A.8)]{FH}) tells us that $\nu$ must be of the following form:
	$$\begin{cases}\nu_1 & = \lambda\mu_1 - \alpha_1 - \alpha_2 \\
	\nu_2 & = \lambda\mu_2 + \alpha_1 - \beta = \lambda\mu_2  + A \;\;\text{(where $A = \alpha_1 - \beta$)}\\
	\nu_3 & = \lambda\mu_3 + \alpha_2 + \beta = \alpha_2 + \beta,\end{cases}$$
where $\alpha_1$, $\alpha_2 \ge 0$ are the numbers of ``new" boxes with 1 in the second and the third row, respectively,
and $\beta \ge 0$ is the number of ``new" boxes with 2 in the third row.

\setlength{\unitlength}{0.4in}
\begin{picture}(7,6)(0,0)
\linethickness{1pt}
\put (0,4){\line(1,0){10}}
\put (5,3){\line(1,0){5}}
\linethickness{0.2pt}
\put (0,3){\line(1,0){5}}
\linethickness{1pt}
\put(0,2){\line(1,0){5}}
\put(0,2){\line(0,1){2}}
\put(10,3){\line(0,1){1}}
\put(5,2){\line(0,1){1}}
\put(5,4.3){\makebox(0,0){$\lambda_1$}}
\put(2.5,3.3){\makebox(0,0){$\lambda_2$}}
\put(11.2,3.3){\makebox(0,0){$\Large{\underbrace{\fbox{1}\dots\dots \fbox{1}}_{\mu_1-\alpha_1-\alpha_2}}$}}
\put(6.2,2.3){\makebox(0,0){$\Large{\underbrace{\fbox{1}\dots\dots \fbox{1}}_{\alpha_1}}$}}
\put(8.3,2.3){\makebox(0,0){$\Large{\underbrace{\fbox{2}\dots\dots \fbox{2}}_{\mu_2-\beta}}$}}
\put(1.3,1.3){\makebox(0,0){$\Large{\underbrace{\fbox{1}\dots\dots\dots \fbox{1}}_{\alpha_2}}$}}
\put(4,1.3){\makebox(0,0){$\Large{\underbrace{\fbox{2}\dots\dots\dots \fbox{2}}_{\beta}}$}}
\end{picture}
\\
The Littlewood-Richardson rule also tells us that there are two kinds of restrictions on the parameters $\alpha_i$ and $\beta$.
The first one comes from ``Pieri's formula"(\cite[(A.7)]{FH}), which says that no two boxes in the same column can have the same number, so that we have
	\begin{align*}
	\alpha_1 & \le \lambda_{12}, & \alpha_2 + \beta & \le \lambda_{23} + \alpha_1\\
	\alpha_2 & \le \lambda_{23}.
	\end{align*}
The second one goes as follows.
When we list the new boxes from right to left, starting with the top row and working down, say from 1 to $k$-th boxes,
the number $i$ should appear no less than the number $i+1$, so that we have
	\begin{align*}
	\alpha_1 + \alpha_2 & \le \mu_{12} + \beta, & \beta & \le \mu_{23} \\
	\alpha_2& \le \mu_{12}.
	\end{align*}
Now we can extract constraints for $A = \alpha_1 - \beta$ as follows.
	\begin{equation}\label{eq-A}
	\begin{cases}A \le \min(\lambda_{12}, \mu_{12})\\ -A \le \min(\lambda_{23}, \mu_{23})\end{cases}.
	\end{equation}

Thus, we have to find an upper bound (independent of $\lambda$ and $\mu$) of
	\begin{align*}
	\frac{d_\nu}{d_\lambda d_\mu} & = \frac{(\nu_{12}+1)(\nu_{13} +2)(\nu_{23}+1)}{(\lambda_{12}+1)(\lambda_{13} +2)(\lambda_{23}+1)(\mu_{12}+1)(\mu_{13} +2)(\mu_{23}+1)}\\
	& = \frac{\nu_{13} +2}{(\lambda_{13} +2)(\mu_{13} +2)}\cdot \frac{(\nu_{12}+1)(\nu_{23}+1)}{(\lambda_{12}+1)(\lambda_{23}+1)(\mu_{12}+1)(\mu_{23}+1)} = I \cdot II.
	\end{align*}
For $I$ we have
	$$I = \frac{\lambda_1 + \mu_1 - \alpha_1 -2\alpha_2 -\beta +2}{(\lambda_1 +2)(\mu_1 +2)} \le \frac{\lambda_1 + \mu_1 +2}{(\lambda_1 +2)(\mu_1 +2)} \le 1.$$
For $II$ we have
	\begin{align*}
	II & = \frac{(\lambda\mu_{12} -\alpha_1 -\alpha_2 -A+1)(\lambda\mu_{23} + A -(\alpha_2 + \beta)+1)}{(\lambda_{12}+1)(\lambda_{23}+1)(\mu_{12}+1)(\mu_{23}+1)}\\
	& \le \frac{(\lambda\mu_{12} -A+1)(\lambda\mu_{23} + A+1)}{(\lambda_{12}+1)(\lambda_{23}+1)(\mu_{12}+1)(\mu_{23}+1)} = II',
	\end{align*}
where $A = \alpha_1 - \beta$.

Now we divide the cases into two parts, namely (1) $A\ge 0$, (2) $A<0$.
\vspace{0.2cm}

(1) When $A\ge 0$ by \eqref{eq-A} we have
	\begin{align*}
	II' & \le \frac{(\lambda\mu_{12}+1)(\lambda\mu_{23} + A+1)}{(\lambda_{12}+1)(\lambda_{23}+1)(\mu_{12}+1)(\mu_{23}+1)}\\
	& \le \frac{(\lambda\mu_{12}+1)(\lambda\mu_{23}+1)}{(\lambda_{12}+1)(\lambda_{23}+1)(\mu_{12}+1)(\mu_{23}+1)} + \frac{(\lambda\mu_{12}+1)\min(\lambda_{12}, \mu_{12})}{(\lambda_{12}+1)(\lambda_{23}+1)(\mu_{12}+1)(\mu_{23}+1)}\\
	& \le 1 + 2 = 3.
	\end{align*}
For the second term we used the following elementary inequality.
	$$\frac{(a+b+1)\min(a,b)}{(a+1)(b+1)} \le 2,\;\; a,b\ge 0.$$
(2) When $A<0$ we similarly have $II' \le 3$.
Note that $II''$ is symmetric via the correspondence
	$$(A, \lambda_{12}, \mu_{12}, \lambda_{23}, \mu_{23}) \longleftrightarrow (-A, \lambda_{23}, \mu_{23}, \lambda_{12}, \mu_{12}).$$

\section{The proof of Proposition \ref{prop-exp-domination}}\label{Appendix-exp-poly weight}

First, we recall the following lemma from \cite{LSS}.

\begin{lem}\label{L:estimate poly-expo weight}
Let $0<\alpha<1$ and take $\beta \geq \displaystyle \max\left\{1, \frac{6}{\alpha(1-\alpha)} \right\}$.
Define the functions $p: [0,\infty) \to \mathbb{R}$ and $q :  \mathbb{R}^+ \to  \mathbb{R}$ by
\begin{align}\label{Eq:estimate poly-expo weight}
p(x)=Cx^\alpha-\beta \ln (1+x) \ \ , \ \ q(x)=\frac{p(x)}{x}.
\end{align}
Then $p$ is increasing and $q$ is decreasing on $\displaystyle  \left[\left(\frac{\beta^2}{\alpha(1-\alpha)}\right)^{1/\alpha} , \infty \right)$.
\end{lem}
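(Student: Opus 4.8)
The plan is to treat both assertions as elementary one–variable calculus: I will show that $p'\ge 0$ and $q'\le 0$ on the interval $J=\left[\left(\tfrac{\beta^2}{\alpha(1-\alpha)}\right)^{1/\alpha},\infty\right)$, writing $x_0$ for its left endpoint, so that $x\in J$ is the same as $x^\alpha\ge\tfrac{\beta^2}{\alpha(1-\alpha)}$. Here $C\ge 1$ is the fixed constant from \eqref{eq2}, and the argument only ever uses $C\ge 1$. I will also use without further comment the consequences $\alpha(1-\alpha)\le\tfrac14$ and, from $\beta\ge\tfrac{6}{\alpha(1-\alpha)}$, that $\beta\ge 24$ and $\tfrac{1}{\alpha(1-\alpha)}\le\tfrac\beta6$.

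The statement about $p$ is immediate. We have $p'(x)=C\alpha x^{\alpha-1}-\tfrac{\beta}{1+x}=\dfrac{C\alpha\, x^{\alpha-1}(1+x)-\beta}{1+x}$, and since $x^{\alpha-1}(1+x)=x^{\alpha-1}+x^\alpha\ge x^\alpha$ this gives $p'(x)\ge\dfrac{C\alpha x^\alpha-\beta}{1+x}$. For $x\in J$ one has $x^\alpha\ge\tfrac{\beta^2}{\alpha(1-\alpha)}\ge\tfrac{\beta}{\alpha}\ge\tfrac{\beta}{C\alpha}$ (using $\beta\ge 1>1-\alpha$ and $C\ge 1$), hence $p'(x)\ge 0$ on $J$.

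For $q$, note that $x^2q'(x)=xp'(x)-p(x)=C(\alpha-1)x^\alpha+\beta\ln(1+x)-\tfrac{\beta x}{1+x}=-\psi(x)-\tfrac{\beta x}{1+x}$, where $\psi(x):=C(1-\alpha)x^\alpha-\beta\ln(1+x)$. Since $-\tfrac{\beta x}{1+x}<0$, it suffices to prove $\psi\ge 0$ on $J$. Exactly the manipulation used for $p'$ gives $\psi'(x)\ge\dfrac{C\alpha(1-\alpha)x^\alpha-\beta}{1+x}$, which is $\ge 0$ for $x\in J$ because $x_0^\alpha=\tfrac{\beta^2}{\alpha(1-\alpha)}\ge\tfrac{\beta}{C\alpha(1-\alpha)}$ (here $C\beta\ge 1$). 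Thus $\psi$ is increasing on $J$, and it remains only to check $\psi(x_0)\ge 0$, i.e.
\[
\frac{C\beta}{\alpha}\ \ge\ \ln(1+x_0).
\]
This is the one place where the precise size of the lower bound on $\beta$ enters: $\tfrac{1}{\alpha(1-\alpha)}\le\tfrac\beta6$ yields $x_0\le\left(\tfrac{\beta^3}{6}\right)^{1/\alpha}$, hence $\ln(1+x_0)\le\ln 2+\tfrac{1}{\alpha}\ln\tfrac{\beta^3}{6}\le\ln 2+\tfrac{3\ln\beta}{\alpha}$, so the displayed inequality reduces to $C\beta\ge 3\ln\beta+\alpha\ln 2$, which holds for every $\beta\ge 24$. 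Therefore $\psi\ge 0$, and so $q'\le 0$, on $J$.

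I expect the only genuine obstacle to be this final step $\psi(x_0)\ge 0$: everywhere else the signs fall out of the single elementary inequality $x^{\alpha-1}(1+x)\ge x^\alpha$, whereas closing the gap at the left endpoint requires knowing that $x_0$ is not too large relative to $e^{C\beta/\alpha}$, and it is precisely the lower bound imposed on $\beta$ (through its effect on $\tfrac{1}{\alpha(1-\alpha)}$) that guarantees this. If sharper constants were desired one could instead verify directly that $x\mapsto\ln(1+x)/x^\alpha$ is decreasing on $J$ and bound it at $x_0$, but the monotonicity–plus–endpoint route above seems the most economical.
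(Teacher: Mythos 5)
Your argument is correct: the identities $p'(x)=\bigl(C\alpha x^{\alpha-1}(1+x)-\beta\bigr)/(1+x)$ and $x^2q'(x)=-\psi(x)-\tfrac{\beta x}{1+x}$ with $\psi(x)=C(1-\alpha)x^\alpha-\beta\ln(1+x)$ are right, the endpoint computation $\psi(x_0)=\tfrac{C\beta^2}{\alpha}-\beta\ln(1+x_0)$ is right, and the numerical closing step $C\beta\ge 3\ln\beta+\alpha\ln 2$ for $\beta\ge 24$ (which follows from $\beta\ge 6/(\alpha(1-\alpha))\ge 24$) is valid, so $p'\ge 0$ and $q'\le 0$ on the stated interval. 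Note, however, that the paper itself does not prove this lemma --- it is recalled from \cite{LSS} --- so there is no in-paper argument to compare with; your elementary monotonicity-plus-endpoint verification is a reasonable self-contained substitute and is in the same calculus spirit one would expect. One caveat worth making explicit: the lemma as stated leaves the constant $C$ unspecified, and your proof genuinely needs a lower bound on it (you use $C\ge 1$ both for $C\alpha x^\alpha\ge\beta$ and for $C\beta\ge 1$); this is not a removable convenience, since for sufficiently small $C>0$ one has $p'(x_0)<0$ at the left endpoint $x_0=(\beta^2/(\alpha(1-\alpha)))^{1/\alpha}$, so the statement is simply false for arbitrary $C>0$ with the given $\beta$-condition. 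Your reading of $C$ as the constant from \eqref{eq2} (hence $\ge 1$), or as $C=1$ as in the application in Appendix \ref{Appendix-exp-poly weight}, is the right one, but you should state that hypothesis as part of the lemma rather than as a gloss. A last microscopic point: the bound $\ln(1+x_0)\le\ln 2+\tfrac1\alpha\ln(\beta^3/6)$ uses $x_0\ge 1$, which holds here since $x_0^\alpha\ge 4\beta^2$, but deserves a word.
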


Now we present the second lemma.

\begin{lem}\label{L:Expo weight-bounded-Poly weight}
Let $0<\alpha<1$, $\beta \geq \displaystyle \max\left \{1, \frac{6}{\alpha(1-\alpha)} \right \}$,
and let $G$ be a compact group. Suppose that
$\tau : \widehat{G} \to [0,\infty)$ is a function satisfying
\begin{align}\label{Eq:lenght func-trai equality-double side}
\tau(\sigma) \leq \tau(\pi)+\tau(\pi').
\end{align}
for every $\pi, \pi' \in \widehat{G}$ and $\sigma \subset \pi\otimes \pi'$.
Let $p$ and $q$ be the functions defined in (\ref{Eq:estimate poly-expo weight})
and consider the function $\om : \widehat{G} \to [1,\infty)$ defined by
$$\om(\pi)=e^{p(\tau(\pi))}=e^{\tau(\pi)q(\tau(\pi))} \ \ \ (\pi\in \widehat{G}).$$
Then, for every $\pi, \pi' \in \widehat{G}$ and $\sigma \subset \pi\otimes \pi'$,
$$\om(\sg)\leq M^2\om(\pi)\om(\pi'),$$
where
\begin{align}\label{Eq:bound-poly expo weight}
M=\max \{ e^{p(t)-p(s)-p(r)}: t,s,r \in [0,2K]\cap \mathbb{Z} \}.
\end{align}
and
\begin{align}\label{Eq:increasing-decreasing interval}
K=\left(\frac{\beta^2}{\alpha(1-\alpha)}\right)^{1/\alpha}.
\end{align}
\end{lem}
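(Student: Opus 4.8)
The plan is to prove the equivalent additive estimate $p(\tau(\sigma))-p(\tau(\pi))-p(\tau(\pi'))\le 2\ln M$ whenever $\sigma\subset\pi\otimes\pi'$; exponentiating and using $\om=e^{p\circ\tau}$ then gives $\om(\sigma)\le M^2\om(\pi)\om(\pi')$. Write $t=\tau(\sigma)$, $s=\tau(\pi)$, $r=\tau(\pi')$, so $t\le s+r$ by \eqref{Eq:lenght func-trai equality-double side}. At the outset note that $M$ is finite, that $M\ge 1$ (take $t=s=r=0$ and use $p(0)=0$), hence $\ln M=\max_{j\in[0,2K]\cap\z}p(j)-2\min_{j\in[0,2K]\cap\z}p(j)\ge 0$, and that by Lemma \ref{L:estimate poly-expo weight} the function $p$ is increasing and $q$ is decreasing on $[K,\infty)$ — the only analytic facts needed. (As in the applications I take $\tau$ to be $\n\cup\{0\}$-valued; for real-valued $\tau$ replace $[0,2K]\cap\z$ by $[0,2K]$ throughout.)

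The engine is a ``subadditivity at large scales'' estimate: if $x,y\ge K$ then $p(x+y)\le p(x)+p(y)$. Indeed, for $x\ge y\ge K$ one has $x+y\ge x\ge y\ge K$, so monotonicity of $q$ gives $q(x+y)\le q(x)$ and $q(x+y)\le q(y)$, and therefore $p(x+y)=x\,q(x+y)+y\,q(x+y)\le x\,q(x)+y\,q(y)=p(x)+p(y)$. The same computation yields a one-small-argument variant: if $u\ge K$ and $0\le h\le K$ then $p(u+h)\le p(u)+h\,q(u)\le p(u)+\max\{p(\lceil K\rceil),0\}$, using $q(u)\le q(\lceil K\rceil)$, $h\le\lceil K\rceil$ and $\lceil K\rceil q(\lceil K\rceil)=p(\lceil K\rceil)$; here $\lceil K\rceil\le 2K$ since $K^\alpha=\beta^2/(\alpha(1-\alpha))\ge 4>1$, so $\max\{p(\lceil K\rceil),0\}\le\max_{j\in[0,2K]\cap\z}p(j)$. (Alternatively, the hypothesis $\beta\ge 6/(\alpha(1-\alpha))$ — which forces $\beta\ge 24$ — lets one check directly that $p(K)\ge 0$, the estimate reducing, after bounding $\ln(1+K)$, to an inequality of the form $\beta\ge 3\ln\beta$; then $p\ge 0$ on $[K,\infty)$ and the positive parts may be dropped.)

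It remains to run a short case analysis on the sizes of $s$ and $r$ relative to $K$. If $s,r\ge K$: when $t\ge K$, monotonicity of $p$ and the large-scale subadditivity give $p(t)\le p(s+r)\le p(s)+p(r)$, so the target quantity is $\le 0$; when $t<K$ one has $p(t)\le\max_{[0,2K]\cap\z}p$ while $p(s),p(r)\ge p(\lceil K\rceil)\ge\min_{[0,2K]\cap\z}p$. If $r<K$ (say) and $s<K$ as well, then $t<2K$ and all of $t,s,r$ lie in $[0,2K]\cap\z$, so the bound is immediate. If $r<K$ but $s\ge K$, then $t\le s+r<s+K$; the sub-cases $t\le s$ and $t<K$ reduce to the estimates just used, and for $s<t$ (so $t\ge K$ and $0<t-s<K$) the one-small-argument variant gives $p(t)\le p(s)+\max\{p(\lceil K\rceil),0\}$, whence $p(t)-p(s)-p(r)\le\max_{[0,2K]\cap\z}p-\min_{[0,2K]\cap\z}p\le\ln M$. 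In every case $p(t)-p(s)-p(r)\le\ln M\le 2\ln M$, which is the assertion (indeed with the better constant $M$).

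I expect the main obstacle to be organizational rather than analytic: one must choose the threshold $K$ (with $\lceil K\rceil\le 2K$ for the integer truncation) so that every ``boundary'' configuration is trapped inside the interval $[0,2K]$ governing $M$, and then check all the cases. The substantive inequalities — monotonicity of $p$ and $q$ — are supplied by Lemma \ref{L:estimate poly-expo weight}, and the only honest computation one might wish to include is the elementary verification that $p(K)\ge 0$.
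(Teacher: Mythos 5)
Your proof is correct and follows essentially the same route as the paper's: a case analysis at the threshold $K$ driven by the monotonicity of $p$ and $q$ from Lemma \ref{L:estimate poly-expo weight}, your ``large-scale subadditivity'' $p(x+y)\le p(x)+p(y)$ for $x,y\ge K$ being exactly the paper's estimate $e^{(\tau(\pi)+\tau(\pi'))q(\tau(\pi)+\tau(\pi'))}\le e^{\tau(\pi)q(\tau(\pi))}e^{\tau(\pi')q(\tau(\pi'))}$ written additively. The differences are only organizational (you split cases by the sizes of $\tau(\pi),\tau(\pi')$ with sub-cases on $\tau(\sigma)$, versus the paper's five cases, and you are a bit more careful about the sign of $q$ near $K$), and in fact your argument yields the slightly sharper constant $M$ in place of $M^2$.
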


\begin{proof}
By Lemma \ref{L:estimate poly-expo weight},
$p$ is increasing and $q$ is deceasing on $[K,\infty)$.
Let $\pi, \pi' \in \widehat{G}$ and $\sigma \subset \pi\otimes \pi'$ be a subrepresentation
of $\pi\otimes \pi'$.
We will prove the statement of the theorem by considering various
cases:\\
{\it Case I:} $\max\{ \tau(\pi), \tau(\pi')\}\leq K$. In this case, $\tau(\sg)\leq \tau(\pi)+\tau(\pi')\leq 2K$.
Hence
$$\frac{\om(\sg)}{\om(\pi)\om(\pi')}=e^{p(\tau(\sg))-p(\tau(\pi))-p(\tau(\pi'))} \leq M.$$
{\it Case II:} $\max\{ \tau(\pi), \tau(\pi')\}>  K$, $
\min\{ \tau(\pi), \tau(\pi')\}\leq  K$, and $\tau(\sg) < K$.
Without loss of generality, we can assume that $\tau(\pi)>K$ and $\tau(\pi')\leq K$.
Thus, by Lemma \ref{L:estimate poly-expo weight},
\begin{eqnarray*}
\om(\sg) & = & e^{p(\tau(\sg))} \\
&\leq & M e^{p(K)} \\
& \leq & M e^{p(\tau(\pi)+\tau(\pi'))} \\
&=& M e^{(\tau(\pi)+\tau(\pi'))q(\tau(\pi)+\tau(\pi'))}\\
&=& M e^{\tau(\pi)q(\tau(\pi)+\tau(\pi'))}e^{\tau(\pi')q(\tau(\pi)+\tau(\pi'))}\\
&\leq& M e^{\tau(\pi)q(\tau(\pi))}e^{Kq(K)}\\
&=& M \om(\pi)\om(\pi')e^{p(K)-p(\tau(\pi'))} \\
&\leq & M^2 \om(\pi)\om(\pi').
\end{eqnarray*}
{\it Case III:} $\max\{ \tau(\pi), \tau(\pi')\}>  K$,
$\min\{ \tau(\pi), \tau(\pi')\}\leq  K$, and $\tau(\sg) \geq K$.
Without loss of generality, we can assume that $\tau(\pi)>K$ and $\tau(\pi')\leq K$.
Then, by (\ref{Eq:lenght func-trai equality-double side}),
$$\tau(\pi)+\tau(\pi')\geq \tau(\sg)\geq K.$$
Thus, by Lemma \ref{L:estimate poly-expo weight},
\begin{eqnarray*}
\om(\sg) & = & e^{p(\tau(\sg))} \\
& \leq &  e^{p(\tau(\pi)+\tau(\pi'))} \\
&=&  e^{(\tau(\pi)+\tau(\pi'))q(\tau(\pi)+\tau(\pi'))}\\
&=&  e^{\tau(\pi)q(\tau(\pi)+\tau(\pi'))}e^{\tau(\pi')q(\tau(\pi')+\tau(\pi'))}\\
&\leq&  e^{\tau(\pi)q(\tau(\pi))}e^{Kq(K)}\\
&=&  \om(\pi)\om(\pi')e^{p(K)-p(\tau(\pi'))} \\
&\leq & M \om(\pi)\om(\pi').
\end{eqnarray*}
{\it Case IV:} $\min\{ \tau(\pi'), \tau(\pi)\}> K$ and $\tau(\sg)\leq K$.
In this case, we have
\begin{eqnarray*}
\om(\pi)\om(\pi') & = &  e^{p(\tau(\pi))+p(\tau(\pi'))} \\
& \geq & e^{2p(K)} \\
&=& e^{2p(K)-p(\tau(\sg))}\om(\sg) \\
&\geq & \frac{1}{M} \om(\sg).
\end{eqnarray*}
Hence
$$ \om(\sg)\leq M \om(\pi)\om(\pi').$$
{\it Case V:} $\min\{ \tau(\pi), \tau(\pi'), \tau(\sg)\}> K$.
In this case, by Lemma \ref{L:estimate poly-expo weight}, we have
\begin{eqnarray*}
\om(\sg) & = & e^{p(\tau(\sg))} \\
& \leq &  e^{p(\tau(\pi)+\tau(\pi'))} \\
&=& e^{(\tau(\pi)+\tau(y))q(\tau(\pi)+\tau(\pi'))}\\
&=& e^{\tau(\pi)q(\tau(\pi)+\tau(\pi'))}e^{\tau(\pi')q(\tau(\pi)+\tau(\pi'))}\\
&\leq& e^{\tau(\pi)q(\tau(\pi))}e^{\tau(\pi')q(\tau(\pi')}\\
&\leq& \om(\pi)\om(\pi').
\end{eqnarray*}
Therefore by comparing the above five cases and considering the fact that
$M\geq e^{-p(0)}=1$, it follows that
$$\om(\sg)\leq M^2 \om(\pi)\om(\pi').$$
\end{proof}
{\it Proof of Proposition \ref{prop-exp-domination}}:
If we set the function $\om$ by
	$$\om(\pi)=\frac{\gamma_\alpha(\pi)}{\om_\alpha(\pi)}
	=e^{\tau_S(\pi)^\alpha-\beta \ln (1+\tau_S(\pi))} \ \ (\pi\in \widehat{SU(n)}),$$
then by Lemma \ref{L:Expo weight-bounded-Poly weight}, for every $\pi, \pi' \in \widehat{SU(n)}$ and $\sigma \subset \pi\otimes \pi'$,
	$$\om(\sg)\leq M^2\om(\pi)\om(\pi'),$$
where $M$ is the constant defined in (\ref{Eq:bound-poly expo weight}).
This gives us the conclusion we wanted.

\end{document}